\documentclass[11pt, reqno]{amsart}
\usepackage{amsmath,amssymb,amsfonts,amscd,hyperref,color}
\usepackage[utf8]{inputenc}
\usepackage{csquotes}
\usepackage{nicefrac}
\usepackage[abs]{overpic}
\usepackage{verbatim}
\usepackage{enumerate}

\usepackage[
backend=biber,
style=alphabetic,
sorting=nyt,
maxnames = 99,
minnames = 99,
maxalphanames=99,
maxcitenames=99,
maxbibnames = 99,
doi=false,
url=false,
isbn= false
]{biblatex}

\AtEveryBibitem{%
	\clearfield{issn}%
}

\newcommand{\Hmm}[1]{\leavevmode{\marginpar{\tiny%
$\hbox to 0mm{\hspace*{-0.5mm}$\leftarrow$\hss}%
\vcenter{\vrule depth 0.1mm height 0.1mm width \the\marginparwidth}%
\hbox to
0mm{\hss$\rightarrow$\hspace*{-0.5mm}}$\\\relax\raggedright #1}}}

\newtheorem{theorem}{Theorem}
\newtheorem{corollary}[theorem]{Corollary}
\newtheorem{lemma}[theorem]{Lemma}
\newtheorem{proposition}[theorem]{Proposition}

\theoremstyle{definition}

\newtheorem*{remark}{Remark}

\numberwithin{equation}{section}
\newcommand{\Z}{{\mathbb Z}}
\newcommand{\R}{{\mathbb R}}

\newcommand{\N}{{\mathbb N}}

\newcommand{\abs}[1]{\left| #1 \right|}

\newcommand{\rw}{transition weight}

\newcommand{\set}[1]{\left\lbrace #1 \right\rbrace }

\begin{document}
\title[Optimal Hardy Inequalities for Random Walks on $\Z^2$]
{Optimal Hardy Inequalities for Random Walks on $\Z^2$}
\author[P.~Hake]{Philipp Hake}
\address{P.~Hake, Institut f\"ur Mathematik, Universit\"at Leipzig, 04109 Leipzig, Germany}\email{philipp.hake@math.uni-leipzig.de}
\author[M.~Keller]{Matthias Keller}
\address{M.~Keller, Israel Institute of Advanced Studies, Jerusalem, Israel; Institut f\"ur Mathematik, Universit\"at Potsdam
14476  Potsdam, Germany}
\email{matthias.keller@uni-potsdam.de}
\author[F.~Pogorzelski]{Felix Pogorzelski}
\address{ F.~Pogorzelski, Israel Institute of Advanced Studies, Jerusalem, Israel;  Institut f\"ur Mathematik, Universit\"at Leipzig, 04109 Leipzig, Germany}
\email{felix.pogorzelski@math.uni-leipzig.de}

\date{\today}

\begin{abstract}
We prove an optimal Hardy inequality for every aperiodic, symmetric random walk in $\Z^2$ with finite variance. In particular, we verify null-criticality, and thus, optimality of the underlying Hardy weight.
Under suitable moment conditions, we use fine asymptotics of the potential kernel due to Fukai and Uchiyama in order to derive the asymptotics of the weight.
For the standard Laplacian, we recover the expected first order term in the asymptotics but also show that next order term is negative. Thus, our result shows that the constant in the Hardy inequality proven by Kapitanski and Laptev cannot be larger than $1/4$, which is the optimal constant in the continuum. The proof of null-criticality rests on a new criterion for general graphs beyond the locally finite case. We also recover the situation of $\Z^d$ with $d \geq 3$ which can also be also treated by our new method.
\end{abstract}

\maketitle


\section{Introduction}
In recent years, there has been a growing interest in Hardy inequalities for discrete Laplacians. While Hardy's original inequality was on the discrete half-line $\N_0$, see \cite{HardyHistory} for a historical account, Hardy inequalities were overwhelmingly studied in the continuum setting, \cite{BEL,FLW23,KPS,Mazya,RS19}. There they have become a most fundamental tool in the analysis of partial differential equations, operator theory and mathematical physics. An outstanding and wide studied  feature of Hardy inequalities in the continuum is that in many cases,  the optimal constant can be determined and non-existence of minimizers can be shown.

First discrete Hardy inequalities for the standard Laplacian on the Euclidean lattice $\Z^d$, $d\ge2$, were proven by Rozenblum and Solomyak in \cite{RS09,RS13},  
and via a different method by Kapitanski and Laptev in \cite{KL16}. In these works, the authors show a Hardy inequality  with a weight of decay $|x|^{-2}$ as expected from the continuum setting. However, these methods do not allow to determine the optimal constant, and the issue of non-existence of minimizers is not addressed.

Another method, called the supersolution construction, was developed in \cite{DFP14,DP16} in the continuum setting, which was then transferred to graphs in \cite{KPP18}. This approach allows  to construct optimal Hardy weights from positive superharmonic functions, where optimal means that the Hardy weight cannot be improved, and the inequality does not admit a minimizer.
Specifically, in \cite{KPPHardy,KPP18}, an optimal Hardy weight for $\N_0$ and $\Z^d $, $d\ge3$, was constructed whose asymptotics resemble the optimal Hardy weight in the continuum. However, in contrast to the continuuum,  these discrete Hardy weight include higher order terms. In the one-dimensional case, these higher order terms are positive, see also~\cite{KS22,DFF}. In contrast, a  result of \cite{Gup23,gupta2024discrete} implicitly implies that at least in higher dimensions, the higher order terms must include negative terms as well.  

The main focus of this work is the two-dimensional case which is the most delicate one. While the main interest is the standard Laplacian $\Z^2$, our setting goes way beyond  to the entire class of aperiodic, symmetric random walks on $\Z^2$ with finite variance. For each such random walk, we construct an optimal Hardy weight  on $\Z^2 \setminus \{0\}$.
In particular, we allow for random walks with infinite support, which are not covered by the setting of \cite{KPP18}.
 Under additional moment conditions, we further derive the first two terms in the asymptotics of the weight.  While the leading term is the expected one,  the next higher order term is purely negative.
  This phenomenon stands in contrast to the one-dimensional case, and also to the continuous two-dimensional case. Our result comes with the important insight that we obtain the  sharp constant $1/4$ near infinity, and the global constant in the inequality of Kapitanski and Laptev cannot be larger.

 For the proof, we build on a new criterion which  was proven in the doctoral thesis of the first named author, \cite{hake2025optimal}, which we  prove here in a slightly more general form. A recent work \cite{HKPIII} presents another method to prove optimality of Hardy weights on graphs for non-locally finite graphs. This approach also covers optimality for our random walks on $\Z^d$, $d\ge 3$. However, the criterion of \cite{HKPIII} does not cover the $\Z^2$ case. We also point out that
some moment conditions on the random walk in question are necessary, as can be seen from the case of the fractional Laplacian treated in \cite{HKP}.

\section{Main results}

In this section, we present the main results of this paper.
We first introduce the relevant notions for    random walks over $\Z^d$. For background information, we refer to the monographs \cite{Spitzer76,Woe00,LawlerLimic}.

We equip $\Z^d$ with the Euclidian norm $|x| = \sqrt{\langle x,x \rangle}$ and the standard scalar product  from $\R^d$.
Throughout this work, we consider a function $\mu\colon \Z^d \to [0,\infty)$ 
whose  support generates $\Z^d$ as an additive group, a property which is called {\em aperiodic} in the context of random walks. Furthermore, we assume that $\mu$ is symmetric, i.e., $$\mu(x) = \mu(-x)$$ for all $x \in \Z^d$ and that $\mu$ has a finite second moment  $$\sum_{x \in \Z^d} \mu(x)|x|^2 < \infty. $$

We will call an aperiodic, symmetric $\mu$ with finite second moment a {\em {\rw}.} If $\mu$ gives rise to a probability measure, i.e.,    $\sum_{x\in \Z^d}\mu(x)=1$,  we will call $\mu$ a {\em  random walk}. We do not restrict ourselves to  random walks since our results are of a pure analytical nature and do not depend on the normalization. On the other hand, in our proofs we draw on results  which are formulated in a probabilistic language which we therefore also adopt here for {\rw}s. Clearly, every {\rw} can be normalized to a random walk by scaling.

Given $M > 0$, we write $$\|\mu\|_M = \sum_{x \in \Z^d} \mu(x)|x|^M$$ for the {\em $M$-th moment} of $\mu$, and say that $\mu$ {\em has finite $M$-th moment} if $\|\mu\|_M < \infty$. By definition, every {\rw} with finite $M$-th moment for $M \geq 2$,  has also finite $L$-th moment for all $0 < L \leq M$.

So, clearly existence of the finite second moment and symmetry, implies that the first moment vanishes, i.e.,
$$\sum_{x \in \Z^d} \mu(x) x =0$$
which is refered to as \emph{zero mean} for random walks.

We denote the {\em covariance matrix} for $\mu$
by
 $$\Sigma_{\mu} = \sum_{x \in \Z^d} \mu(x)xx^T$$
which is   the unique symmetric matrix $Q \in \R^{d \times d}$ such that 
\(
\langle x,\, Qy \rangle =  \sum_{x \in \Z^d} \mu(x)\langle x,y \rangle^2 \) { for all } \(y  \in \R^d.
\) 
We further define the linear transformation
$$x_{\mu} = ({\det \Sigma_{\mu}})^{\frac{1}{2d}} \, \big(\Sigma_{\mu}^{-1/2}x\big), \quad x \in \R^d,$$
where $\Sigma_{\mu}^{-1/2}$ is defined via the spectral calculus, making use of the fact that $\Sigma_{\mu}$ is symmetric and positive definite. We further set $|x|_{\mu}= |x_{\mu}|$ for $x \in \R^d$.  We point out that for multiples $\mu$ of the simple random walk represented by the measure $ (2d)^{-1} 1_{|x|=1}$, we have $|x| = |x|_{\mu}$, $x\in\Z^d$. More generally, this is the case if 
$\vartheta_{\mu}^2(x) = ({\det \Sigma_{\mu}})^{\frac{1}{d}}$ for all $x \in \Z^d$, $x \neq 0$, where $\vartheta_{\mu}^2$ denotes  the normalized covariance form 
\[
\vartheta^2_{\mu}\colon \Z^d \setminus \{0\} \to [0, \infty), \qquad \vartheta^2_{\mu}(x) = \frac{\langle x,\Sigma_{\mu}x \rangle}{\langle x,x \rangle}.
\] 
 
 \medskip
 
 We next discuss {\em graphs} over a countable discrete set $X$ which are symmetric functions
$b\colon X\times X \to [0, \infty)$  with vanishing diagonal and which are {\em locally summable}, i.e., $\sum_{y \in X}b(x,y) < \infty$ about every $x \in X$. The {\em formal Laplacian}  $\mathcal{L}\colon \mathcal{F} \to C(X)$ is then defined as
\[
\mathcal{L}u(x) = \sum_{y \in X} b(x,y)\big( u(x) - u(y) \big),
\]
where $C(X)$ denotes the vector space of real-valued functions $f\colon X \to \R$, and $\mathcal{F} = \{f \in C(X)\mid \sum_{y \in X} b(x,y)|f(y)| < \infty \mbox{ for all } x \in X \}$. 
The {\em bilinear form}  $\mathcal{Q}\colon C_c(X) \times C_c(X) \to \R$ associated with $b$ is given as 
\[
\mathcal{Q}\big( \varphi, \psi \big) = \frac{1}{2} \sum_{x,y \in\Z^d} b(x,y) \big( \varphi(x) - \varphi(y) \big) \big( \psi(x) - \psi(y) \big),
\] 
where $C_c(X) \subseteq C(X)$ is the subspace of functions of finite support. We further denote $\mathcal{Q}(\varphi) := \mathcal{Q}(\varphi, \varphi)$ for $\varphi \in C_c(X)$.

For a subset $Y \subseteq X$, a non-trivial function $w:Y \to [0,\infty)$ is called a {\em Hardy weight} on $Y$ if for all $\varphi \in C_c(X)$ vanishing outside of $Y$, we have validity of the {\em Hardy inequality}
\begin{align*} 
\mathcal{Q}(\varphi) \geq w(\varphi) := \sum_{x \in Y}w(x) \varphi^2(x) . 
\end{align*}
A Hardy weight $w$ is called {\em critical} if for any Hardy weight $w' \geq w$, one has $w' = w$. In this case, there exists a unique (up to scalar multiples)  function $u$ which is positive on $Y$, vanishing outside of $Y$ and satisfies  $\mathcal{L}u = wu$ on $Y$. This $u$ is called the {\em (Agmon) ground state} for $w$. A critical Hardy weight $w$ is called {\em null-critical} if the ground state $u$ is not square integrable with respect to the measure $w$, i.e.,\@ $\sum_{x \in Y} w(x) u^2(x) = \infty$. This property can be interpreted as saying that the Hardy inequality 
does not admit a minimizer in the closure of $C_c(X)$ with respect to form and pointwise convergence. A critical Hardy weight which is not null-critical is called {\em positive-critical}. 
Null-critical Hardy weights are also {\em optimal near infinity}, i.e.,\@ if $(1+\lambda)w$ is a Hardy weight on $Y\setminus K$ for some finite $K$ and $\lambda\ge0$, then $\lambda=0$. In other words, there cannot be a Hardy weight dominating $w$ near infinity.
For a proof of this fact, see e.g.,\@ \cite[Proposition~15]{HKPII} as well as \cite{Fischer,KN,KPP18,KovarikPinchover}. Null-critical Hardy weights are also referred to as {\em optimal} Hardy weights, \cite{DFP14,DP16}.

We point out that a {\rw} $\mu$ on $X=\Z^d$ gives rise to a graph $b_{\mu}$ over $\Z^d$ via $$b_{\mu}(x,y) = \mu(x-y)$$ for $x \neq y$ and $b_{\mu}(x,x) =0$, where $x,y \in \Z^d$.
 We write $\Delta_{\mu}=\mathcal{L} $ for the formal Laplacian in this case. 

 \medskip
 
 Let us now specify to $d=2$.
The simple random walk on $\Z^2$ is recurrent, and so are random walks with finite variance, i.e., finite second moment, see \cite[Section~8, T1~(b)]{Spitzer76}. It can be inferred from criticality theory that in these situations, there cannot be   a Hardy weight for $Y=\Z^2$, see \cite[Theorem~6.1]{KLW} or \cite{F00}. However, adding a Dirichlet boundary condition for $\Delta_{\mu}$ at $x=0$ gives rise to a transient graph in the sense of \cite[Chapter~6]{KLW}. Since transience of a graph is equivalent to the existence of Hardy weights, cf.\@ \cite[Theorem~6.1]{KLW}, we can study Hardy weights along with their optimality criteria for $Y=\Z^2 \setminus \{0\}$. 
This is where we are heading next. 

\medskip

To this end, we consider the $d$-dimensional torus $\mathbb{T}^d = [-\pi,\pi]^d$, endowed with the Lebesgue measure $d\xi$. The {\em potential kernel} for a {\rw} $\mu$ on $\Z^2$ is given as 
\[
a_{\mu}:\Z^2 \to [0,\infty), \quad a_{\mu}(x) = \frac{1}{(2\pi)^2} \int_{\mathbb{T}^2} \frac{1-\exp(-i\langle x, \xi \rangle)}{1-\widehat{\mu}_0(\xi)}\, d\xi,
\]
where  $\widehat{\mu}_0$ denotes the characteristic function of the random walk $\mu_0 = \mu/\sum_{x \in \Z^2} \mu(x)$, i.e.,\@
\[
\widehat{\mu}_0(\xi)  =\sum_{x \in \Z^2} \mu_0(x)\exp(-i\langle x, \xi \rangle).
\]
Our  main result concerns the construction of an optimal, i.e., null-critical Hardy weight for {\rw}s over $\Z^2$.

\begin{theorem} \label{thm:MAIN}
	Let $\mu$ be a   {\rw} over $\Z^2$. Then, 
	\[
	w_{\mu}: \Z^2 \setminus \{0\} \to [0, \infty), \quad w_{\mu}(x) = \frac{\Delta_{\mu} a^{1/2}_{\mu}(x)}{a^{1/2}_{\mu}(x)}
	\]
	is a null-critical Hardy weight  on $\Z^2 \setminus  \{0\}$. If in addition, $\|\mu\|_{6}< \infty$, then as $|x| \to \infty$,  
	\begin{multline*}
		 w_{\mu}(x) =  \, \frac{\sqrt{\det \Sigma_{\mu}}}{8} \cdot \frac{1}{|x|_{\mu}^2 (\log|x|_{\mu})^2} \, - \, \frac{\kappa_{\mu} \pi \det \Sigma_{\mu}}{4\theta} \frac{1}{|x|_{\mu}^2 (\log |x|_{\mu})^3}  \\
		  + \, \mathcal{O}\left( \frac{1}{|x|^2(\log|x|)^4} \right), 
	\end{multline*}  
	where $\theta=\sum_{v \in \Z^2}\mu(v)$ and $\kappa_{\mu} $ is a constant.
\end{theorem}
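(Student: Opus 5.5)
The proof splits into three parts: the Hardy inequality itself, null-criticality, and the asymptotics.

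\emph{Hardy inequality.} We use the supersolution construction of \cite{KPP18}, adapted to the non-locally-finite graph $b_\mu$. The key input is that $a_\mu$ is harmonic for $\Delta_\mu$ on $\Z^2\setminus\{0\}$, i.e.\ $\Delta_\mu a_\mu = \theta\,\delta_0$ (after normalization). It is also positive there, with $a_\mu(0)=0$; this is standard, see \cite{Spitzer76}. Its growth is logarithmic, $a_\mu(x)\sim c\log|x|$, so $a_\mu\in\mathcal F$ by finite variance. Since $t\mapsto t^{1/2}$ is concave, $a^{1/2}_\mu\in\mathcal F$ and
\[
\Delta_\mu a_\mu^{1/2}(x)=\sum_y \mu(x-y)\bigl(a_\mu^{1/2}(x)-a_\mu^{1/2}(y)\bigr)\ge \frac{\Delta_\mu a_\mu(x)}{2a_\mu^{1/2}(x)}=0 \qquad (x\neq 0).
\]
Hence $w_\mu\ge0$. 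The ground state transform (Agmon--Allegretto--Piepenbrink for summable graphs, \cite{KLW}) then gives $\mathcal Q(\varphi)\ge w_\mu(\varphi)$ for $\varphi$ supported in $\Z^2\setminus\{0\}$.

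\emph{Null-criticality.} We use the criterion announced in the introduction, from \cite{hake2025optimal}, in its generalized form. For $u=a^{1/2}_\mu$ it asks for cut-offs $e_n\in C_c$ with $e_n\to1$ pointwise such that $\mathcal Q(e_nu)-w_\mu((e_nu)^2)$ stays bounded (or tends to $0$), while $w_\mu(u^2)=\sum w_\mu a_\mu=\infty$. Via the ground state representation, $\mathcal Q(e_nu)-w_\mu(e_n^2u^2)=\frac12\sum\mu(x-y)u(x)u(y)(e_n(x)-e_n(y))^2$. For $e_n$ we take the logarithmic cut-off
\[
e_n(x)=\min\Bigl\{1,\ \frac{(\log(R_n^2/|x|))_+}{\log R_n}\Bigr\},
\]
the discrete analogue of the cut-off used for null-criticality in dimension two.

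The main obstacle is controlling long jumps $|x-y|$ large in that sum, which is where the non-local graph bites. For this we split $|x-y|\le |x|/2$ and $|x-y|>|x|/2$. On the first region we use the Lipschitz bound $|e_n(x)-e_n(y)|\lesssim |x-y|/(|x|\log R_n)$ together with $u\approx(\log|x|)^{1/2}$, and the second moment makes this sum $O(1)$. On the second region we bound $(e_n(x)-e_n(y))^2\le1$ and use $\sum_{|v|>r}\mu(v)\le \|\mu\|_2 r^{-2}$. Non-summability of $w_\mu a_\mu$ follows from the lower bound $w_\mu(x)\gtrsim |x|^{-2}(\log|x|)^{-2}$. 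Without extra moments, I would get this bound from a second-order Taylor expansion of $t^{1/2}$, using $a_\mu(x)-a_\mu(y)$ approximately equal to a gradient of $\log$. Then $\sum w_\mu a_\mu\gtrsim\sum |x|^{-2}(\log|x|)^{-1}=\infty$ in two dimensions.

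\emph{Asymptotics.} Assume $\|\mu\|_6<\infty$. The Fukai--Uchiyama expansion gives
\[
a_\mu(x)=\frac{\theta}{\pi\sqrt{\det\Sigma_\mu}}\log|x|_\mu+\kappa_\mu'+\mathcal O(|x|^{-2})
\]
with suitable normalization, which is where $\kappa_\mu$ enters. We write $w_\mu(x)=\sum_v\mu(v)\bigl(1-\sqrt{a_\mu(x+v)/a_\mu(x)}\bigr)$ and expand $\sqrt{1+h}=1+h/2-h^2/8+O(h^3)$. The linear term vanishes because $\Delta_\mu a_\mu=0$. The quadratic term $\frac18\sum\mu(v)h^2$ with $h\approx\langle\nabla\log|x|_\mu,v\rangle/(\log|x|_\mu+c)$ produces, after applying $\Sigma_\mu$, the leading constant $\sqrt{\det\Sigma_\mu}/8$. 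Expanding $1/(\log|x|_\mu+c)^2$ produces the negative $(\log)^{-3}$ correction proportional to $\kappa_\mu$. The cubic remainder and the jumps with $|v|>|x|/2$ are controlled by the sixth moment and contribute $\mathcal O(|x|^{-2}(\log|x|)^{-4})$.
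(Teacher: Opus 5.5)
Your Hardy-inequality step is fine, and so is your derivation of the expansion under $\|\mu\|_6<\infty$; both are essentially the paper's. The null-criticality part, however, is the heart of the theorem and is claimed for every {\rw}. It has two genuine gaps.

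\textbf{The lower bound on $w_\mu$.} You deduce $\sum_x w_\mu a_\mu=\infty$ from a pointwise bound $w_\mu(x)\gtrsim |x|^{-2}(\log|x|)^{-2}$, which finite variance does not give you. By harmonicity, $w_\mu(x)=\frac{1}{2a_\mu(x)}\sum_y\mu(x-y)\bigl(a_\mu^{1/2}(y)-a_\mu^{1/2}(x)\bigr)^2$ for $x\neq 0$. So your bound is an averaged lower bound on the increments $a_\mu(x+v)-a_\mu(x)$ at scale $|v|/|x|$, i.e.\ a gradient estimate for the potential kernel. Under $\|\mu\|_2<\infty$ alone you have at best $a_\mu(x)=c\log|x|_\mu+\kappa_\mu+o(1)$, and the $o(1)$ error swamps increments of size $|v|/|x|$. ``Approximately a gradient of $\log$'' is exactly the missing step. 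As written, your argument yields null-criticality only when an expansion with error $o(|x|^{-1})$ is available, e.g.\ for $\|\mu\|_6<\infty$ via Proposition~\ref{prop:asymp_w_d=2}.

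The paper needs no quantitative input here. In the proof of Theorem~\ref{thm:CRITERION}, null-criticality comes from the killing term $c_K$ created by the Dirichlet condition at $0$: one has $\sum\mathcal{L}_K u_K=0<\sum c_K u_K$, which rules out positive-criticality. Within your own framework, work with the sum rather than pointwise. The identity above gives $\sum_{x\neq0}w_\mu a_\mu=\infty$ if and only if $\mathcal{Q}(a_\mu^{1/2})=\infty$. This follows from the coarea formula $\mathcal{Q}(v)=\int_0^\infty g_v(t)\,dt$ and the fact that the flux $g_{a_\mu}$ is a positive constant, exactly as at the end of the proof of Theorem~\ref{thm:abstractmain}.

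\textbf{The long-jump estimate.} Your treatment of long jumps in the criticality step fails with only a second moment. After bounding $(e_n(x)-e_n(y))^2\le1$, every $x$ in the annulus $R_n<|x|<R_n^2$ contributes. You are left with at least $\sum_{R_n<|x|<R_n^2}\log|x|\sum_{|v|>|x|/2}\mu(v)$, and you also ignore the growth $u(x+v)\approx(\log|x+v|)^{1/2}$ for $|v|\gg|x|$. The tail bound $\|\mu\|_2 r^{-2}$ turns this into $\sum\log|x|\,|x|^{-2}\asymp(\log R_n)^2$. This is not an artifact of the estimate: for $\mu(v)\asymp|v|^{-4}(\log|v|)^{-\beta}$ with $1<\beta<2$, which has finite variance, the quantity is of order $(\log R_n)^{2-\beta}\to\infty$.

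The step can be repaired. Keep the smallness of the cut-off also for long jumps, $|e_n(x)-e_n(y)|\le\min\{1,\bigl|\log|x|-\log|y|\bigr|/\log R_n\}$, and sum dyadically against $\epsilon(r)=\sum_{|v|>r}\mu(v)|v|^2\to0$. You then still need the weak-compactness argument that bounded energy plus pointwise convergence to $1$ forces recurrence of the ground-state transformed form.

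The paper avoids all of this by cutting off along level sets of $a_\mu$ itself, via $\lambda_T(a_\mu)$. The coarea identity then absorbs every jump into the flux $g_{a_\mu}$, whose finiteness is automatic from $a_\mu\to\infty$ by Proposition~\ref{prop:criterionC0}. This is why Theorem~\ref{thm:CRITERION}, contrary to your description of it, asks for no cut-off estimates at all, only properness, harmonicity off $0$ and the weak bounded oscillation condition.
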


\begin{remark}
The constant $\kappa_{\mu}$ appears in the asymptotic expansion of the potential kernel $a_{\mu}$ proven in \cite{FU96}. It is explicit for the simple random walk as we discuss below. While the authors of \cite{FU96} do not give an explicit formula for $\kappa_{\mu}$, they refer to \cite[Section~12, P3]{Spitzer76} who proves the expansion in a slightly less general case. There, $\kappa_{\mu}$ is a sum of positive constants which involve Euler's constant $\gamma$, Catalan's constant, $\pi$ and a positive integral and, hence, $\kappa_\mu>0$. The corresponding integral in the more general case of \cite{FU96} can then be derived by a change of variables.
\end{remark}

\medskip
Specifying our result to the standard Laplacian on $\Z^2$, i.e., the simple random walk, the theorem yields the following optimal Hardy inequality. 

\begin{corollary} \label{cor:MAIN}
For the {\rw}  $\mu = 1_{|x|=1}$ over $\Z^2$, the null-critical Hardy weight   $w = ({\Delta_{\mu} a_{\mu}^{1/2}})/{a_{\mu}^{1/2}}$  on $\Z^2 \setminus \{0\}$  satisfies
	\begin{align*}
		w(x)  = \frac{1}{4} \frac{1}{|x|^2 (\log |x|)^2} - \frac{\log 8 + 2\gamma}{4} \frac{1}{|x|^2 (\log|x|)^3} + \mathcal{O}\Big( \frac{1}{|x|^2 (\log|x|)^4} \Big)
	\end{align*}
	as $|x| \to \infty$, where $\gamma > 0$ is the Euler constant.
\end{corollary}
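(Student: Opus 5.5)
The plan is to obtain Corollary~\ref{cor:MAIN} directly from Theorem~\ref{thm:MAIN} by specialization; no new analysis is required beyond identifying the constants. The {\rw} $\mu = 1_{|x|=1}$ is symmetric, and its support $\{\pm e_1, \pm e_2\}$ generates $\Z^2$, so it is aperiodic. It is finitely supported, so every moment is finite, in particular $\|\mu\|_6 = 4 < \infty$. Hence both parts of Theorem~\ref{thm:MAIN} apply: $w$ is a null-critical Hardy weight on $\Z^2\setminus\{0\}$, and the two-term expansion holds. It then remains to evaluate $\theta$, $\Sigma_\mu$, $|x|_\mu$ and $\kappa_\mu$.

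\textbf{Step 1: $\theta$, $\Sigma_\mu$ and $|x|_\mu$.} We have $\theta = \sum_v \mu(v) = 4$ and
\[
\Sigma_\mu = \sum_{|x|=1} xx^T = 2I .
\]
Thus $\det\Sigma_\mu = 4$ and $\sqrt{\det\Sigma_\mu} = 2$. Consequently
\[
x_\mu = (\det\Sigma_\mu)^{1/4}\,\Sigma_\mu^{-1/2}x = \sqrt2\cdot \tfrac{1}{\sqrt2}\,x = x ,
\]
so $|x|_\mu = |x|$, in line with the remark after the definition of $\vartheta_\mu^2$. The leading coefficient is therefore $\sqrt{\det\Sigma_\mu}/8 = 1/4$. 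The second coefficient becomes
\[
\frac{\kappa_\mu \pi \det\Sigma_\mu}{4\theta} = \frac{4\pi\kappa_\mu}{16} = \frac{\pi\kappa_\mu}{4}.
\]
The error term is unchanged since $|x|_\mu = |x|$.

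\textbf{Step 2: the constant $\kappa_\mu$.} It remains to show $\pi\kappa_\mu = \log 8 + 2\gamma$. The potential kernel $a_\mu$ is defined through the normalized walk $\mu_0 = \mu/4$, which is the simple random walk. For it the classical expansion (Spitzer, \cite[Section~12]{Spitzer76}; also Fukai--Uchiyama \cite{FU96}) reads
\[
a_\mu(x) = \frac{2}{\pi}\log|x| + \frac{2\gamma + \log 8}{\pi} + \mathcal{O}\big(|x|^{-2}\big).
\]
I will match this against the normalization of the \cite{FU96} expansion used in the proof of Theorem~\ref{thm:MAIN}. There the leading term should be $\frac{1}{\pi\sqrt{\det\Sigma_{\mu_0}}}\log|x|_\mu$ and the constant term $\kappa_\mu$. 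For the simple random walk $\Sigma_{\mu_0} = \tfrac12 I$, so $\sqrt{\det\Sigma_{\mu_0}} = \tfrac12$ and the leading term is $\tfrac{2}{\pi}\log|x|$, which is consistent. This identifies $\kappa_\mu = (2\gamma+\log 8)/\pi$. Substituting into Step 1 gives the coefficient $(\log 8 + 2\gamma)/4$ and hence the stated formula.

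\textbf{Main obstacle.} The only delicate point is bookkeeping of normalizations. $\Sigma_\mu$ in the theorem refers to the unnormalized $\mu$, whereas $a_\mu$ and the \cite{FU96} constant refer to $\mu_0$, and the factor $\theta$ in the theorem compensates for this. I would verify against the precise form of the \cite{FU96} expansion invoked in the proof of Theorem~\ref{thm:MAIN} that $\kappa_\mu$ is exactly the additive constant in $a_\mu$, rather than a rescaled version of it. If it is rescaled, the same computation goes through with the appropriate factor inserted.
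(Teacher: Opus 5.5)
Your proposal is correct and follows the paper's proof exactly. Both specialize Theorem~\ref{thm:MAIN} with $\theta=4$ and $\Sigma_\mu = 2\,\mathrm{Id}$, so that $|x|_\mu=|x|$ and the leading coefficient is $1/4$, and then insert $\kappa_\mu=(\log 8+2\gamma)/\pi$ from the simple random walk expansion in \cite{FU96}. Your normalization concern is settled by the paper's observation that $a_{c\mu}=a_\mu$ and $\kappa_{c\mu}=\kappa_\mu$, so $\kappa_\mu$ is exactly the additive constant in the expansion of $a_\mu$.
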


\begin{remark}
 Kapitanski and Laptev prove in \cite[Theorem~4.1]{KL16} a Hardy inequality with a weight $w_{KL}$ in  $\Z^2 \setminus \{x\mid |x| \leq 1\}$, defined as
 $$w_{KL}(x) = c\,|x|^{-2} (\log|x|)^{-2}$$ for some non-explicit constant $c > 0$. 
 Clearly, their result is an immediate consequence of the above corollary which  gives more information in various aspects. A minor aspect   is that our result holds for all $\varphi$ vanishing at $0$, while the inequality in \cite[Theorem~4.1]{KL16} requires $\varphi$ to vanish for all $x \in \Z^2 $ with $|x| \leq 1$. In \cite[Proposition~4.2]{RS13}  Rozenblum and Solomyak dealt with this issue by working with a weight of the form 
 \[
 w_{RS}(x) = c\,|x|^{-2}\big( \log(|x| + 2) \big)^{-2}
 \]  
 instead. 
 A more important aspect of our result is that  the above defined $w$  is critical,  so there cannot be a weight dominating $w$. Since $w$ is null-critical and, hence, optimal near infinity, 
the constant $1/4$ in front of the leading term of the expansion is sharp, and, thus, it is the optimal constant for the Hardy inequality near infinity. It further coincides with the best possible constant of the continous counterpart of the above inequality in $\R^2$ which reads as 
\[
 \int_{\{ |x| > 1\}} \big| \nabla u(x) \big|^2\, dx \geq \frac{1}{4} \int_{\{|x| > 1\}} \frac{u(x)^2}{|x|^2 (\log|x|)^2}\, dx 
\]
for sufficiently smooth functions $u$, see \cite[Section~1.3.1]{Mazya} or \cite[Proposition 2.76]{FLW23}. However, the  next higher order error terms in our expansion is negative. This indicates  that the Hardy inequality of Kapitanski and Laptev may not even hold for $c=1/4$ in contrast to the situation in $\R^2$, where $w(x)=|x|^{-2}(\log|x|)^{-2}/4$ is optimal, cf.~\cite[Example 13.1]{DFP14}. The situation in $\Z^2$ is 
 opposite to the one-dimensional case of $\N_0$, where the higher order terms in the expansion of the optimal Hardy weight are strictly positive, see \cite{KPP18,KPPHardy}.
\end{remark}

Let us comment on the proof of Theorem~\ref{thm:MAIN}. For the expansion of the weight under the stronger moment condition on $\mu$, we use 
the fine  asymptotics of the potential kernel $a_{\mu}$ derived in \cite{FU96,Uch98}, see Theorem~\ref{thm:UchiyamaFukai} below. The formula describing $w_{\mu}$ via the harmonic function $a_{\mu}$
is called the {\em supersolution construction}. The latter has been developed in the fundamental works \cite{DFP14,DP16} in the continuum setting, and has been established for graphs in \cite{KPP18,HKPIII}, see also \cite{KL23}. However, none of the criteria provided in the latter works cover the situation of the above theorem. For instance, the potential kernel 
 does not arise by convolution of the Green kernel, and consequently, the criteria from  \cite{HKPIII} do not apply. Moreover, the method from \cite{KPP18} is bound to locally finite graphs and, thus, does not include {\rw}s with infinite support. The criterion developed in this paper is a generalization of a result from \cite{KPP18} to non-locally finite graphs. 
 
 \medskip
 
 The following theorem is the key tool to prove null-criticality of the aforementioned Hardy inequalities on $\Z^2 \setminus \{0\}$.  Here, we say that a function $u:X \to [0,\infty)$ is {\em proper} if $u^{-1}(I)$ is a finite set for every compact interval $I \subseteq (0,\infty)$. We say that $Y \subseteq X$ is a {\em proper subset} if $Y \neq X$.

\begin{theorem} \label{thm:CRITERION} 
	Let $b$ be a  connected graph over $X$ and let $K \subseteq X$ be a proper and non-empty subset.
	Suppose that $u \in \mathcal{F}$ is strictly positive on $X \setminus K$, $u=0$ on $K$ and $\mathcal{L}u = 0$ on $X \setminus K$. If $u$ is additionally proper and satisfies a weak bounded oscillation condition
	\[
	\sum_{x,y \in X,\, u(y)< t_0 \leq u(x)} b(x,y)\big( u(x) - u(y) \big) < \infty
	\]
	for some $t_0 > 0$,
	then $w:X \setminus K \to [0, \infty)$ given as
	\[
	w = \frac{\mathcal{L} u^{1/2}}{u^{1/2}}
	\]
	is a null-critical Hardy weight on $X \setminus K$. 
\end{theorem}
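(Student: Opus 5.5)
The plan is the supersolution construction followed by a ground state transform. Criticality will be read off from recurrence of the transformed graph, which I would test with logarithmic cut-offs in the level sets of $u$. Null-criticality will come from a flux identity for the harmonic function $u$.

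\emph{Step 1: $w$ is a Hardy weight.} Since $0\le u^{1/2}\le 1+u$, we have $u^{1/2}\in\mathcal F$, so $w$ is well defined. By concavity of the square root,
\[
u^{1/2}(x)-u^{1/2}(y)\ \ge\ \frac{u(x)-u(y)}{2u^{1/2}(x)} \qquad (x\in X\setminus K).
\]
Hence $\mathcal L u^{1/2}\ge \mathcal L u/(2u^{1/2})=0$ on $X\setminus K$. Equality would force $u(y)=u(x)$ for all neighbours $y$ of $x$. Since $b$ is connected and $K\neq\emptyset$, some $x\in X\setminus K$ has a neighbour in $K$, so $w\not\equiv0$. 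Next, for $\varphi\in C_c(X)$ vanishing on $K$, write $\varphi=u^{1/2}\psi$. The ground state representation (valid in the non-locally finite case because $u^{1/2}\in\mathcal F$ and $\varphi$ has finite support) gives
\[
\mathcal Q(\varphi)-w(\varphi)=\frac12\sum_{x,y}b(x,y)\,u^{1/2}(x)u^{1/2}(y)\big(\psi(x)-\psi(y)\big)^2\ \ge 0 .
\]
This proves the Hardy inequality.

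\emph{Step 2: criticality.} Criticality of $w$ with ground state $u^{1/2}$ is equivalent to recurrence of the graph $b_u(x,y)=b(x,y)u^{1/2}(x)u^{1/2}(y)$ on $X\setminus K$. Equivalently, there should be a sequence $\psi_n\in C_c(X\setminus K)$, or in the form closure, with $\psi_n\to1$ pointwise and $\frac12\sum b_u(\psi_n(x)-\psi_n(y))^2\to0$. I would take $\psi_n=\eta_n(u)$ with
\[
\eta_n(t)=1 \text{ on } [t_0,n],\qquad \eta_n(t)=2-\frac{\log t}{\log n} \text{ on } [n,n^2],\qquad \eta_n(t)=0 \text{ for } t>n^2 .
\]
Below $t_0$, a fixed profile $\eta$ vanishes at $0$ and does not depend on $n$. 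The key tool is the flux
\[
\Phi(t)=\sum_{u(y)<t\le u(x)}b(x,y)\big(u(x)-u(y)\big).
\]
It is finite at $t_0$ by the weak bounded oscillation condition. I would show it is independent of $t$ by summing $\mathcal L u=0$ over the finite set $u^{-1}([s,t))$, using properness and Fubini; the absolute summability needed there is exactly what the oscillation condition plus $u\in\mathcal F$ supply. Then a coarea-type estimate gives
\[
\sum b_u(\eta_n(u(x))-\eta_n(u(y)))^2\ \lesssim\ \int \eta_n'(t)^2\,t\,d(\text{flux})\ \approx\ \Phi\int_n^{n^2}\frac{dt}{t(\log n)^2}=\frac{\Phi}{\log n}\to 0 .
\]
To derive it, write the difference as an integral of $\eta_n'$ along $[u(y),u(x)]$ and use Cauchy--Schwarz. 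The product $u^{1/2}(x)u^{1/2}(y)$ is comparable to $t$ on the relevant pairs; pairs with large jumps are controlled because the logarithm is Lipschitz in $\log t$.

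The part of $\psi_n$ below $t_0$ does not change with $n$, and its energy contribution is a fixed finite quantity. That alone does not suffice, since Step 2 needs the total energy to tend to $0$, so the low-value region must be handled separately. Properness forces $\{\varepsilon\le u\le t_0\}$ to be finite. I would therefore show that the fixed low-value part lies in the form closure of $C_c(X\setminus K)$ and makes no contribution to the energy in the limit. Concretely, I would use a second logarithmic cut-off in the variable $1/u$ near $0$, whose energy is again controlled by $\Phi$ through the same flux identity. I expect this low-value cut-off, together with the rigorous justification of the flux identity for non-locally finite $b$, to be the main obstacle.

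\emph{Step 3: null-criticality.} We need $\sum_{X\setminus K} w\,u=\sum u^{1/2}\mathcal L u^{1/2}=\infty$. Write
\[
u^{1/2}(x)\big(u^{1/2}(x)-u^{1/2}(y)\big)=\frac{u(x)-u(y)}{1+\sqrt{u(y)/u(x)}} .
\]
Summing over $x\in u^{-1}([t,2t))$ and using $\mathcal L u=0$, the odd part cancels. What remains is a nonnegative contribution bounded below by $c\,\Phi$ on each dyadic level band, coming from the pairs that straddle that band. Summing over the infinitely many dyadic bands, which exist because $u$ is proper and unbounded, gives divergence like $\sum_k c\,\Phi=\infty$.
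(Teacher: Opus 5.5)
Your Steps 1 and 2 are correct and use the paper's mechanism: logarithmic cut-offs in the level sets of $u$, with energy in the ground-state-transformed form bounded by the constant flux times $(\log T)^{-1}$. The paper first passes to $(b_K,c_K)$ on $X\setminus K$ and then invokes \cite[Proposition~14]{HKPIII}, but the cut-off argument is the same.

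The low-value cut-off you flag as the main obstacle is no harder than the high one. Cauchy--Schwarz with weight $t$, together with $\sqrt{ab}\,\log(a/b)\le a-b$, gives for every pair
\[
u^{1/2}(x)u^{1/2}(y)\big(\eta(u(x))-\eta(u(y))\big)^2\le \big(u(x)-u(y)\big)\int_{u(y)}^{u(x)}\eta'(t)^2\,t\,dt .
\]
Hence the energy is at most $\int_0^\infty \eta'(t)^2\,t\,\Phi(t)\,dt$. For $\eta(t)=1-|\log t|/\log T$ on $[1/T,T]$ and $0$ elsewhere, this equals $2\Phi/\log T$. No case distinction for large jumps is needed.

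The gap is in Step 3. Your starting identity is right; in fact, for $x\in X\setminus K$ it reads $w(x)u(x)=\frac12\sum_y b(x,y)\big(u^{1/2}(x)-u^{1/2}(y)\big)^2$. The problem is the claim that each dyadic band contributes at least $c\,\Phi$. That claim is false, and it fails exactly for the long jumps the theorem is designed to cover. A pair with $u(x)/u(y)$ large carries its flux $b(x,y)(u(x)-u(y))$ across about $\log_2(u(x)/u(y))$ bands. Yet it contributes at most $b(x,y)(u(x)-u(y))$ to $\sum wu$, i.e.\ its flux counted once, and nothing at all to bands containing neither endpoint.

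Here is a concrete counterexample. Take $X=\N_0$, $K=\{0\}$, $u(0)=0$, $u(n)=2^{2^n}$, and $b(n,n+1)=\Phi/(u(n+1)-u(n))$. All hypotheses hold and $\Phi(t)\equiv\Phi$. But the band $[2^k,2^{k+1})$ contains no vertex unless $k$ is a power of $2$. Nonempty bands need not help either: insert into each band a harmonic vertex joined to its two neighbours with weights of order $\epsilon_k$ (lowering $b(n,n+1)$ slightly to keep harmonicity); that band then contributes at most $\epsilon_k$. So summing $c\,\Phi$ over infinitely many bands does not work, and divergence needs a different mechanism.

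A correct continuation from your identity goes as follows. Letting $t\downarrow 0$ in $\Phi(t)$ gives $\sum_{x\in K,\,y\notin K}b(x,y)u(y)\le\Phi$. Your identity then yields
\[
\sum_{X\setminus K}wu=\mathcal{Q}(u^{1/2})-\tfrac12\sum_{x\in K,\,y\notin K}b(x,y)u(y),
\]
so it suffices to show $\mathcal{Q}(v)=\infty$ for $v=u^{1/2}$. Suppose instead $\mathcal{Q}(v)<\infty$.
\begin{itemize}
\item The coarea formula gives $\int_0^\infty g_v(t)\,dt=\mathcal{Q}(v)$, so $\liminf_{t\to\infty}t\,g_v(t)=0$.
\item Dominated convergence gives $\sum_{v(y)<t\le v(x)}b(x,y)(v(x)-v(y))^2\to 0$.
\item Whenever $v(y)<t\le v(x)$, one has $u(x)-u(y)\le 4t\,(v(x)-v(y))+3\,(v(x)-v(y))^2$.
\end{itemize}
Together these give $\liminf_{t\to\infty}\Phi(t^2)=0$. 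This contradicts $\Phi(t)$ being a positive constant, which holds because some $x\notin K$ has a neighbour in $K$.

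This is precisely the paper's argument in Theorem~\ref{thm:abstractmain} for $c=0$. For Theorem~\ref{thm:CRITERION} itself, the paper instead obtains null-criticality from \cite[Propositions~21 and~22]{HKPII}, using $0=\sum\mathcal{L}_Ku_K<\sum c_Ku_K$.
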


The above theorem  generalizes \cite[Theorem~1.1]{KPP18} in that it allows for potentially  non-locally finite graphs  and the set $K$ does not need to be finite. We obtain that $w_{\mu}$ as defined above is a null-critical Hardy weight on $\Z^2 \setminus \{0\}$ by applying the theorem with $X=\Z^2$, $b=b_{\mu}$, $K= \{0\}$ and $u=a_{\mu}$. 
We will derive Theorem~\ref{thm:CRITERION}  from another general criterion given in Theorem~\ref{thm:abstractmain}, which is of independent interest. For instance, the latter can also be used for {\rw}s on $\Z^d$ beyond $d=2$.

\medskip

Let us briefly discuss the situation when $d\geq 3$. We will use the aformentioned Theorem~\ref{thm:abstractmain} in order to show that the supersolution construction applied to the Green's function  yields optimal Hardy weights. In contrast to the case $d=2$, this can also be deduced from methods building on potential theory, cf.\@ \cite{HKPIII}. For $d \geq 3$, consider 
\begin{align*}
	G_{\mu}:\Z^d \to (0, \infty), \quad G_{\mu}(x) = \frac{1}{(2\pi)^d} \int_{\mathbb{T}^d} \frac{\exp\big( -i\langle x,\xi \rangle \big)}{1-\widehat{\mu}_0(\xi)}\,d\xi,
\end{align*}
where as above, $\widehat{\mu}_0$ is the characteristic function of the  random walk $\mu_0= \mu/\sum_{x \in \Z^d} \mu(x)$. One can verify that $G_\mu$ is the Fourier representation of the Green function of the random walk $\mu_0$ and, thus, a positive multiple of the Green function of $\Delta_\mu$.

\begin{theorem} \label{thm:MAIN2}
	Let $\mu$ be a  {\rw} on $\Z^d$, $d \geq 3$. Then 
	\begin{align*}
		w_{\mu}:\Z^d \to [0,\infty), \quad w_{\mu}(x) = \frac{\Delta_{\mu} G_{\mu}^{1/2}(x)}{G_{\mu}^{1/2}(x)}
	\end{align*}
	defines a null-critical Hardy weight on $\Z^d$. If additionally $\|\mu\|_{d+3} < \infty$, then as $|x| \to \infty$,  
	\begin{align*}
		w_{\mu}(x) = \frac{(d-2)^2 (\det \Sigma_{\mu})^{\frac{1}{d}}}{8|x|_{\mu}^2} + \mathcal{O}\Big( |x|^{-4} \Big). 
	\end{align*}	
\end{theorem}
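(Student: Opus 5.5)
The plan has two parts: null-criticality, obtained from the abstract criterion Theorem~\ref{thm:abstractmain}, and the asymptotic expansion, obtained from the known asymptotics of the Green function.

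\textbf{Null-criticality.} Take $X=\Z^d$, $b=b_\mu$, $K=\emptyset$ and $u=G_\mu$.

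First, $G_\mu$ is strictly positive and $\Delta_\mu$-superharmonic. More precisely, $\Delta_\mu G_\mu = \theta\,\delta_0$, because $G_\mu$ is a multiple of the Green function and $\theta=\sum\mu$. Hence $G_\mu$ is harmonic outside $0$.

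Second, $G_\mu\in\mathcal F$. This holds because $G_\mu$ is bounded and $\mu$ is summable.

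Third, $G_\mu(x)\to 0$ as $|x|\to\infty$, for instance by the Riemann--Lebesgue lemma, since $(1-\widehat\mu_0)^{-1}\in L^1(\mathbb T^d)$ for $d\ge3$ under finite variance. Consequently $G_\mu$ is proper.

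The supersolution construction, i.e.\ the ground-state representation $\mathcal Q(\varphi)-w(\varphi)=\tfrac12\sum b(x,y)u^{1/2}(x)u^{1/2}(y)\bigl(\varphi/u^{1/2}(x)-\varphi/u^{1/2}(y)\bigr)^2$, shows that $w_\mu$ is a Hardy weight. The pointwise inequality $\Delta_\mu G^{1/2}\ge \tfrac{1}{2}G^{-1/2}\Delta_\mu G\geq 0$ gives $w_\mu\ge0$, which is nontrivial near $0$.

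For criticality and null-criticality I would verify the hypotheses of Theorem~\ref{thm:abstractmain}. I expect these to be of the same form as in Theorem~\ref{thm:CRITERION}: properness of $u$, together with a flux bound across the level sets $\{u<t\le u(x)\}$ that is controlled uniformly in $t$. The natural cutoff functions are $\varphi_t=\min(u^{1/2},t^{1/2})$-type truncations, or $(u^{1/2}-\ldots)_+$ applied to $\log u$. These produce a null-sequence whose energy $\mathcal Q(\varphi_t)-w(\varphi_t)\to0$ while $w(u)=\sum w u^2$ diverges.

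The key quantity is the flux through level sets,
\[
\sum_{u(y)<t\le u(x)} b(x,y)\bigl(u(x)-u(y)\bigr).
\]
For a harmonic function with a single pole this equals, by Green's formula applied to $\Delta_\mu G=\theta\delta_0$, the constant $\theta$ for every $t<G(0)$. In particular it is finite uniformly in $t$. Summing over level sets via the coarea formula then yields divergence of $\sum w G$, which is what null-criticality requires.

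\textbf{Main obstacle.} The delicate point is justifying this Green's-formula computation for a non-locally finite graph, where the summation by parts needs absolute convergence. I would first handle it by truncating to finite balls. Passing to the limit is then controlled by the second moment, using $\sum_{|x-y|>R}\mu(x-y)|G(x)-G(y)|\to0$.

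\textbf{Asymptotics.} Under $\|\mu\|_{d+3}<\infty$ I would use the expansion of the Green function (Uchiyama):
\[
G_\mu(x)=c_d\,|x|_\mu^{2-d}\bigl(1+\mathcal O(|x|^{-2})\bigr), \qquad c_d\propto (\det\Sigma_\mu)^{-1/2}.
\]
Set $f=G^{1/2}$. I would Taylor-expand $f(x)-f(x+v)$ to second order in $v$. The first-order terms cancel by symmetry of $\mu$, and the second-order terms give
\[
\tfrac12\langle v,\nabla^2 f\, v\rangle .
\]
Summing against $\mu$ produces $-\tfrac12\operatorname{tr}(\Sigma_\mu\nabla^2 f)$. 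Applied to $f\propto|x|_\mu^{(2-d)/2}$, after the linear change of variables this yields $\tfrac{(d-2)^2}{8}(\det\Sigma_\mu)^{1/d}|x|_\mu^{-2}\, f$.

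The errors come from three sources:
\begin{itemize}
\item large jumps $|v|>|x|/2$, controlled by the $(d+3)$-th moment;
\item the third- and fourth-order Taylor remainders, of size $|x|^{-4}$ relative to $f$;
\item the $\mathcal O(|x|^{-2})$ correction in $G$.
\end{itemize}
Together these give the $\mathcal O(|x|^{-4})$ error term.
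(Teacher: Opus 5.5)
Your null-criticality argument is the paper's: Theorem~\ref{thm:abstractmain} with $u=G_\mu$, $c=0$ and $X=\Z^d$ infinite, using $\Delta_\mu G_\mu=\theta\delta_0\in\ell^1(\Z^d)$ and $G_\mu\in C_0(\Z^d)$. The ``main obstacle'' you name is not one. Since $\{G_\mu\ge t\}$ is finite for every $t>0$, the flux is a finite sum over $x$ of series bounded by $G_\mu(x)\sum_y\mu(x-y)$, so the weak bounded oscillation condition is immediate (Proposition~\ref{prop:criterionC0}). No truncation, second-moment estimate or coarea argument is needed on your side.

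The genuine gap is in the asymptotics: the input $G_\mu=c_d|x|_\mu^{2-d}\bigl(1+\mathcal O(|x|^{-2})\bigr)$ is too weak for what you do with it. Since $G_\mu^{1/2}$ lives on the lattice, the Taylor expansion has to be applied to a smooth comparison function. With this input the only one at your disposal is $F_0=(c_d|x|_\mu^{2-d})^{1/2}$. The difference $E=G_\mu^{1/2}-F_0$ is then merely $\mathcal O(|x|^{-2})F_0$, with no regularity. Hence $\Delta_\mu E(x)=\sum_v\mu(v)\bigl(E(x)-E(x+v)\bigr)$ can only be bounded by $\mathcal O(|x|^{-2})F_0(x)$, which is the same order as the main term $\frac{(d-2)^2\sigma_\mu^2}{8}|x|_\mu^{-2}F_0(x)$. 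So your third error source does not give $\mathcal O(|x|^{-4})$; as written, the argument does not even determine the leading coefficient. Bringing in harmonicity of $G_\mu$ would recover the leading term, but only up to $\mathcal O(|x|^{-3})$. The reason is that the product of the odd first-order term with an unstructured $\mathcal O(|x|^{-2})$ correction does not cancel by symmetry.

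What is missing is the precise form of Theorem~\ref{thm:UchiyamaFukai2}. There the $|x|^{-d}$ correction is $\vartheta(x_\mu)|x|_\mu^{-d}$, which is smooth and homogeneous of degree $-d$, and the remainder is $\mathcal O(|x|^{-(d+2)})$, i.e.\ relatively $\mathcal O(|x|^{-4})$. With it you can take $F=\bigl(c_{d,\mu}|x|_\mu^{2-d}+\vartheta(x_\mu)|x|_\mu^{-d}\bigr)^{1/2}$ as comparison function. Homogeneity bounds on derivatives, as in Lemma~\ref{lem:psi}, show that the $\vartheta$-part changes $w_\mu$ only by $\mathcal O(|x|^{-4})$. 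Only the $\mathcal O(|x|^{-(d+2)})$ remainder is then estimated crudely, and your computation of $-\tfrac12\operatorname{tr}(\Sigma_\mu\nabla^2F)$ closes without needing harmonicity.

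The paper uses the same input but organizes the computation differently. It writes $w_\mu(x)=\sum_v\mu(v)\bigl(1-\sqrt{1+r(x,v)}\bigr)$ with $r=(G_\mu(x+v)-G_\mu(x))/G_\mu(x)$, and then:
\begin{itemize}
\item harmonicity of $G_\mu$ off $0$ removes the linear term exactly;
\item parity (Lemma~\ref{lem:power_expansions_G}) removes $O_1E_2$ and $O_1^3$;
\item Lemma~\ref{lem:normcovariance} evaluates the remaining term $\tfrac18\sum_v\mu(v)O_1^2$.
\end{itemize}
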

\begin{remark}
For the Laplacian with standard weights, i.e., $\mu=1_{|x|=1}$, this result has already been known with higher order terms  $\mathcal{O}(|x|^{-3})$, cf.\@ \cite[Theorem~7.2]{KPP18}. The major novelty of the above result is that it holds for {\rw}s with infinite support, i.e., non-locally finite graphs over $\Z^d$, as well, which was not covered by the methods of \cite{KPP18}.
 
Here, we prove the corresponding asymptotic behavior under a moment condition using the fine asymptotics on $G_{\mu}$ provided in \cite{Uch98}.
By looking at the fractional Laplacian it becomes clear that some moment condition is necessary: In \cite{HKP,HKPIII} it is shown, that for the fractional Laplacian $(-\Delta)^{s}$ on $\Z^d$ with $s \in (0,1)$, optimal Hardy weights decay as $|x|^{-2s}$ which is clearly slower than $|x|^{-2}$. Furthermore, the {\rw} decays as $|x|^{-d-2s}$, cf.\@ \cite[Theorem~A.1]{HKP}. Hence, these transition weights have finite $M$-th moment if and only if $M < 2s$.	
\end{remark}	 
\medskip

The paper is organized as follows. In the next section we prove the  criterion for general graphs,  Theorem~\ref{thm:CRITERION}. The latter is derived from Theorem~\ref{thm:abstractmain}, which is a slight generalization of a result from the doctoral dissertation thesis of the first named author \cite[Theorem~2.26]{hake2025optimal}. We then prove the claimed expansions for the weight $w_{\mu}$, see Section~\ref{sec:proofdge3} for $d \geq 3$, and Section~\ref{sec:proofd=2} for $d=2$.  
In the final Section~\ref{sec:proofs} we combine the criteria from Section~\ref{sec:criterion} with the obtained expansions, thus establishing Theorem~\ref{thm:MAIN}, Corollary~\ref{cor:MAIN}, and Theorem~\ref{thm:MAIN2}.

\section{Criterion for general graphs} \label{sec:criterion}

In this section we prove two related criteria for general graphs 
to show optimal Hardy weights via the supersolution construction. 

\medskip

To this end, let us first clarify some relevant notions. We extend the concept of a graph given in the introduction by positive potentials. Namely, a pair $(b,c)$ is called a {\em graph} over a discrete, countable set $X$ if $b:X\times X\to [0,\infty)$ is symmetric, has zero diagonal and is locally summable as before and $c:X \to [0,\infty)$ is an arbitrary function, called the {\em killing term}. The elements in $X$ are called the {\em vertices} of $(b,c)$ and two vertices $x,y \in X$ are connected by an {\em edge} if $b(x,y) > 0$.
In case that $c=0$ is the zero function, we simple call $b$ instead of $(b,0)$ a graph over $X$. A graph is said to be {\em connected} if for all $x,y \in X$, there are $x=y_1, y_2, \dots, y_n = y$ such that $b(y_{i}, y_{i+1}) > 0$ for all $1 \leq i \leq n-1$.  From now on, we will always assume that we deal with a    connected graph $(b,c)$ over $X$.

Every graph gives rise to the {\em formal Laplacian} $ \mathcal{L}:\mathcal{F} \to C(X), $
\[
\mathcal{L}u(x) = \sum_{y \in X} b(x,y)\big( u(x) - u(y) \big) \, + \, c(x)u(x),
\]
with $\mathcal{F}$ and $C(X)$ as defined before. 
Given $Y \subseteq X$, we say that a function $u:X \to \R$ is {\em strictly positive on $Y$} and we write $u > 0$ on $Y$ if $u(x) > 0$ for all $x \in Y$. Further, we say that $u \in \mathcal{F}$ is {\em superharmonic}, (respectively {\em harmonic}) on $Y$, if $\mathcal{L}u \geq 0$, (respectively $\mathcal{L}u = 0$) on $ Y$. If $X=Y$, we just say that $u$ is {\em superharmonic}, (respectively {\em harmonic}).

The  bilinear form $\mathcal{Q}: C_c(X) \times C_c(X) \to \R $ associated to $ \mathcal{L} $ then acts as  
\[
  \mathcal{Q}( \varphi, \psi ) = \frac{1}{2} \sum_{x,y \in X} b(x,y) \big( \varphi(x) - \varphi(y) \big) \big( \psi(x) - \psi(y) \big) \, + \, \sum_{x \in X} c(x)\varphi(x)\psi(x).
\] 
The {\em extended space} $\mathcal{D}_0 = \mathcal{D}_0(\mathcal{Q})$ associated with a quadratic form $\mathcal{Q}$ is the closure of $C_c(X)$ in $\mathcal{D}$
with respect to the norm $\|\cdot\|_{\mathcal{Q},o}$, where $o \in X$ is a fixed vertex in the connected graph $(b,c)$ and $\|\varphi\|^2_{\mathcal{Q},o} = \mathcal{Q}(\varphi) + |\varphi(o)|^2$ and $\mathcal{Q}(\varphi) = \mathcal{Q}(\varphi,\varphi)$. 
  
 Given a non-negative function $w\colon X \to [0, \infty)$ and $p \in [1, \infty)$, we write $\ell^p(X,w)$ for the subspace of all $f \in C(X)$ with $\sum_{ X}w  |f|^p < \infty$, where for  sums over subsets $Y \subseteq X$, we  use the short hand notation
\[
\sum_Y f := \sum_{y \in Y} f(y),
\]
whenever $f \in C(X)$ is such that the sum on the right hand side is absolutely convergent or all terms are non-negative. Furthermore, we write $\ell^p(X) = \ell^p(X,1)$.

\medskip
A non-negative function $w\colon X \to [0, \infty)$ is called a {\em Hardy weight} on $Y \subseteq X$ if it satisfies a {\em Hardy inequality} on $Y$, i.e., $
	\mathcal{Q}(\varphi) \, \geq \, w(\varphi)$
for all $\varphi \in C_c(X)$ with $\varphi(x) = 0$ for $x \in X \setminus Y$.  Non-trivial Hardy weights on $Y=X$ exist precisely for transient graphs, see \cite[Chapter~6]{KLW} for background information. In this case, the Hardy inequality can be extended to all of $\mathcal{D}_0$. Note that also under the presence of a non-trivial killing term, the notions of {\em critical} and {\em null-critical} Hardy weights can be defined completely analogously as done in the previous section.

For a function $ u\in \mathcal{F} $, we define the function $ g:[0,\infty)\to [0, \infty]$  as
\[
g_{u}(t) = \sum_{x,y\in X, u(y) < t \leq u(x)} b(x,y) \big( u(x) - u(y) \big) .
\]
It has been shown in \cite[Lemma~2.8]{KPP18} that for a strictly positive, proper $u \in \mathcal{F}$,  one has
\[
g_{u}(s) = g_{u}(t) + \sum_{x \in X,\, s < u(x) \leq t} \mathcal{L}u(x)
\]
for all $t > s >0$, where both sides may be infinite. However, note that the sum over $\mathcal{L}u$ on the right hand side is always finite due to properness of $u$.

We say  $u$ satisfies a {\em weak bounded oscillation condition}, if $g_{u}(t)$ is finite for some $ t>0$. Hence, if $u$ satisfies a weak bounded oscillation condition and  $\mathcal{L}u \in \ell^1(X)$, then  $g_{u}$ is bounded.

\medskip

In this section we prove the following general criterion for obtaining null-critical Hardy weights.  

\begin{theorem} \label{thm:abstractmain}
 	Let $u \in \mathcal{F}$, $u > 0$
	satisfy   the following conditions:
	\begin{itemize}
		\item $u$ is superharmonic,
		\item $\mathcal{L}u \in \ell^1(X)$,
		\item $u$ is proper and satisfies a weak bounded oscillation condition.
	\end{itemize}
	Then, 
	$w= {\mathcal{L} u^{1/2}}/{u^{1/2}}
	$
	is a critical Hardy weight on $X$. If additionally,
	\begin{itemize}
		\item $X$ is infinite,
		\item $\sum_X \mathcal{L}u \neq \sum_X cu$ \, or \, $c=0$, 
	\end{itemize}
	then $w$ is null-critical.
\end{theorem}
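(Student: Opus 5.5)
We will follow the supersolution construction of \cite{KPP18}, adapted to the non-locally finite setting. Ground state transform: for $v=u^{1/2}$ one has $\mathcal{L}v=wv$ with $w\ge 0$. Indeed, by the elementary inequality $\sqrt{u(x)u(y)}\le \tfrac12(u(x)+u(y))$ we get
\[
\mathcal{L}u^{1/2}(x)\ge \frac{\mathcal{L}u(x)}{2u^{1/2}(x)}+\frac{c(x)u^{1/2}(x)}{2}\ge 0 .
\]
Here $u^{1/2}\in\mathcal F$ because $u^{1/2}\le 1+u$. Since $v=u^{1/2}>0$ satisfies $(\mathcal{L}-w)v=0$, the Agmon--Allegretto--Piepenbrink theorem (ground state representation) gives $\mathcal{Q}-w\ge 0$ on $C_c(X)$. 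Hence $w$ is a Hardy weight.

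For criticality we will show that $v$ is a null-sequence limit. Concretely, we construct $\varphi_n\in C_c(X)$ with $\varphi_n\to v$ pointwise and $\mathcal{Q}(\varphi_n)-w(\varphi_n)\to 0$. We then invoke the standard characterization: $\mathcal{Q}-w$ is critical iff such an approximating sequence of the positive solution exists. Following \cite{KPP18}, we take cut-offs of $v$ along the level sets of $u$:
\[
\varphi_n=v\,\eta_n(u),
\]
where $\eta_n$ is a logarithmic cut-off. It equals $1$ on $[\varepsilon_n^{-1}\!,\ldots]$ and vanishes near $0$ and near $\infty$. Since $u$ is proper, $\varphi_n$ has finite support. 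The ground state representation gives
\[
(\mathcal{Q}-w)(\varphi_n)=\tfrac12\sum_{x,y} b(x,y)v(x)v(y)\big(\eta_n(u(x))-\eta_n(u(y))\big)^2 .
\]
We then apply a coarea-type formula. We write $(\eta_n(u(x))-\eta_n(u(y)))^2$ as an integral over $t$ between $u(y)$ and $u(x)$ and use Cauchy--Schwarz. With $v(x)v(y)\le \max u$ this bounds the energy by
\[
\int \eta_n'(t)^2\, t\, g_u(t)\,dt .
\]
By \cite[Lemma~2.8]{KPP18}, together with $\mathcal{L}u\in\ell^1$ and the weak bounded oscillation condition, $g_u$ is bounded. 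With the choice $\eta_n(t)$ linear in $\log t$ over $[e^{-n^2}\!,\dots]$-type windows (scale $\log$ of range $n$), the integral $\int \eta_n'(t)^2 t\,dt$ is of order $1/n\to0$.

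This is the main obstacle: the terms $b(x,y)$ with $u(x),u(y)$ in very different ranges are unbounded in number. We will handle them exactly through $g_u$, which counts all edges crossing level $t$, so no local finiteness is needed. The delicate point is that $v(x)v(y)$ must be compared to $t$ for $t$ between $u(y)$ and $u(x)$. We will use the estimate $(\sqrt{a}-\sqrt{b})$-type splitting, or alternatively $v(x)v(y)(\log u(x)-\log u(y))^2\le C(u(x)-u(y))\cdot|\log u(x)-\log u(y)|$-type bounds, to arrive at $\int \eta_n'^2 t\,g_u\,dt$.

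For null-criticality we must show $\sum_X w u=\infty$. We compute $wu=v\mathcal{L}v$. Since $u$ is proper and $X$ is infinite, $u$ takes values accumulating at $0$ or at $\infty$. We use the identity
\[
\sum_{s<u\le t} v\mathcal{L}v \gtrsim \int_s^t \frac{g_u(r)}{4r}\,dr ,
\]
obtained by the same level-set decomposition. It suffices that $g_u$ stays bounded below by a positive constant on the relevant range. By the relation $g_u(s)=g_u(t)+\sum_{s<u\le t}\mathcal{L}u$, the limit of $g_u$ at the end is $\lim g_u=\sum_X\mathcal{L}u-(\text{contribution at }0)$. When $c=0$, the limit at $\infty$ is positive by connectedness and since $u$ is nonconstant. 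The hypothesis $\sum\mathcal{L}u\ne\sum cu$ ensures precisely that the boundary flux $\lim g_u\neq0$. Either way the integral diverges logarithmically, so $w\notin$ positive-critical, i.e.\ $w$ is null-critical.
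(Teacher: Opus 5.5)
Your Hardy-weight and criticality steps are essentially the paper's argument: your $\eta_n$ plays the role of $\lambda_T/\log T$ in Lemma~\ref{lemma:key}, combined with the null-sequence criterion. To make the energy bound correct, drop $v(x)v(y)\le\max u$, which does not compare $v(x)v(y)$ with $t$. Instead use Cauchy--Schwarz with weight $t$ together with $\sqrt{ab}\,\log(a/b)\le a-b$: for $u(x)>u(y)$,
\[
v(x)v(y)\big(\eta(u(x))-\eta(u(y))\big)^2\le v(x)v(y)\log\frac{u(x)}{u(y)}\int_{u(y)}^{u(x)}\eta'(t)^2t\,dt\le\big(u(x)-u(y)\big)\int_{u(y)}^{u(x)}\eta'(t)^2t\,dt ,
\]
which sums exactly to $\int\eta'(t)^2t\,g_u(t)\,dt$.

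The null-criticality part has a genuine gap. The estimate $\sum_{s<u\le t}v\mathcal{L}v\gtrsim\int_s^t g_u(r)\,dr/(4r)$ is only a heuristic for short edges and is false in this generality. Pointwise one has
\[
v\mathcal{L}v(x)=\frac12\big(\mathcal{L}u(x)+c(x)u(x)\big)+\frac12\sum_{y}b(x,y)\big(v(x)-v(y)\big)^2 .
\]
So for harmonic $u$ and $c=0$ the left side sees only $b(x,y)(v(x)-v(y))^2$ for edges touching the window. An edge with $u(y)\le s<t<u(x)$ contributes $b(x,y)(u(x)-u(y))\log(t/s)$ to the right side and nothing to the left. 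An edge inside the window is undercounted by a factor at least $\log(u(x)/u(y))$. Such long edges are exactly what non-locally finite graphs produce, so no direct logarithmic divergence estimate is available.

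What works is the paper's contradiction argument, and it fits your setup. Suppose $\sum_X wu<\infty$. The identity gives $\sum_X cu<\infty$ and $\int_0^\infty g_v(t)\,dt=\frac12\sum_{x,y}b(x,y)(v(x)-v(y))^2<\infty$. Hence $\liminf t\,g_v(t)=0$ both as $t\to0$ and as $t\to\infty$. By dominated convergence, $\sum_{v(y)<t\le v(x)}b(x,y)(v(x)-v(y))^2\to0$ at both ends. The splitting $u(x)-u(y)\le 4t(v(x)-v(y))+3(v(x)-v(y))^2$ for $v(y)<t\le v(x)$ then gives $\liminf g_u(t^2)=0$ at both ends.

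To turn this into a contradiction, your flux bookkeeping must be corrected.

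\emph{Case $c=0$.} Here $g_u$ is non-increasing and not identically zero (otherwise $u$ would be constant), so $g_u(0^+)>0$. It is the end $0^+$, not $\infty$ as you claim, where $g_u$ stays positive; for $u=G_\mu$ one has $g_u(t)=0$ for $t>G_\mu(0)$.

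\emph{Case $c\neq0$.} The relation you quote contains no $c$, so it cannot use the hypothesis $\sum\mathcal{L}u\neq\sum cu$. The killing term does not enter $g_u$, and the correct relation is $g_u(s)=g_u(t)+\sum_{s<u\le t}(\mathcal{L}u-cu)$. If $cu\in\ell^1(X)$, both end limits of $g_u$ exist and $g_u(0^+)-\lim_{t\to\infty}g_u(t)=\sum_X(\mathcal{L}u-cu)\neq0$, contradicting the above. If $\sum_X cu=\infty$, then $\sum_X wu=\infty$ directly.

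The same correction shows that, for $c\neq0$, the boundedness of $g_u$ used in your criticality step (and in the paper's Lemma~\ref{lemma:key}) requires $cu\in\ell^1(X)$. It does not follow from the listed hypotheses. Take $\N_0$ with unit edge weights and constant $c>0$: the positive harmonic $u$ grows geometrically and $g_u$ is unbounded. Since $\sum_n 1/(v(n)v(n+1))<\infty$, the weight $w$ is not even critical there. This does not affect Theorem~\ref{thm:CRITERION}, where $g_{u_K}$ is dominated by the constant $g_u$ of the original graph.
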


\begin{remark}
	We recall from \cite{HKPIII}, that if $u$ is a superharmonic potential, i.e., the image of a positive function of the Green operator, and if also  $\mathcal{L}u \in \ell^1(X)$, then $w$ is critical. In this situation, a weak bounded oscillation condition is necessarily fulfilled, cf.\@ \cite[Remark~2.27~(4)]{hake2025optimal}. Given a strictly positive, superharmonic function $u$ with integrable Laplacian, the above theorem provides an alternative criterion for obtaining a critical Hardy weight via the supersolution construction by assuming in addition that $u$ is proper and satisfies a weak bounded oscillation condition. 
\end{remark}

Before we turn to the proof of the theorem, we show that any non-negative $u \in \mathcal{F}$ that vanishes at infinity or that tends to infinity 
 is proper and satisfies the assumed weak bounded oscillation condition for arbitrary $t > 0$.  Here, we say that $u \in C(X)$ {\em vanishes at infinity} and we write $u \in C_0(X)$ if for each $\varepsilon > 0$, there is a finite set $K \subseteq X$ such that $|u|< \varepsilon$ on $ X \setminus K$.
We say that $u$ {\em tends to infinity} if for each $C >0$ there exists a finite set $K \subseteq X$ such that $|u| > C$ on $X \setminus K$. We denote $I_{\infty}(X)$ as the set of all $u \in C(X)$ that tend to infinity.

\begin{proposition} \label{prop:criterionC0}
 	Assume that $u \in ( C_0(X) \cup I_{\infty}(X) ) \cap \mathcal{F}$ is  non-negative. Then $u$ is proper and satisfies a weak bounded oscillation condition.
\end{proposition}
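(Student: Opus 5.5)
Both properties are checked directly from the definitions. I treat the cases $u\in C_0(X)$ and $u\in I_\infty(X)$ separately, and in each I fix an arbitrary $t>0$ and show $g_u(t)<\infty$.

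\emph{Properness.} Let $I=[\alpha,\beta]\subseteq(0,\infty)$ be a compact interval, so $\alpha>0$. If $u\in C_0(X)$, choose a finite $K$ with $u<\alpha$ on $X\setminus K$; then $u^{-1}(I)\subseteq K$ is finite. If $u\in I_\infty(X)$, choose a finite $K$ with $u>\beta$ on $X\setminus K$; again $u^{-1}(I)\subseteq K$. Non-negativity of $u$ means the absolute value in the definitions of $C_0$ and $I_\infty$ can be dropped.

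\emph{Weak bounded oscillation, case $u\in C_0(X)$.} Fix $t>0$. The set $A_t=\{x\in X : u(x)\ge t\}$ is finite. Every pair contributing to $g_u(t)$ has $x\in A_t$ and $u(y)<t\le u(x)$, so $0\le u(x)-u(y)\le u(x)$ by non-negativity of $u$. Hence
\[
g_u(t)\le \sum_{x\in A_t} u(x)\sum_{y\in X} b(x,y),
\]
which is a finite sum of finite terms by local summability of $b$. (If $A_t$ is empty, $g_u(t)=0$.)

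\emph{Weak bounded oscillation, case $u\in I_\infty(X)$.} Now the set $B_t=\{y\in X : u(y)<t\}$ is finite, and every contributing pair has $y\in B_t$. I would use symmetry of $b$ to write
\[
g_u(t)=\sum_{y\in B_t}\ \sum_{x\in X,\, u(x)\ge t} b(x,y)\big(u(x)-u(y)\big)\le \sum_{y\in B_t}\sum_{x\in X} b(y,x)\,u(x).
\]
For each fixed $y$, the inner sum is finite precisely because $u\in\mathcal{F}$, and the outer sum is finite. Since all terms are non-negative, reordering the double sum needs no justification.

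The only step requiring care is this last one, where membership in $\mathcal{F}$ is what makes the inner sum finite despite $u$ being unbounded. In both cases $g_u(t)<\infty$ holds in fact for every $t>0$, which is stronger than the single $t$ the weak bounded oscillation condition requires.
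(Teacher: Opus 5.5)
Your proof is correct and follows the paper's argument: the relevant level set is finite, and local summability of $b$ then bounds $g_u(t)$ for every $t>0$; your estimate $u(x)-u(y)\le u(x)$ slightly streamlines the paper's split into two double sums. You also write out the $I_\infty(X)$ case, which the paper only says follows ``along similar lines'', correctly noting that there the finiteness comes from $u\in\mathcal{F}$ together with symmetry of $b$. In addition, your choice $\{u\ge t\}$ avoids a small imprecision in the paper's $\Omega_t=\{u>t\}$, which misses pairs with $u(x)=t$.
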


\begin{proof}
	We carry out the proof for $u \in C_0(X) \cap \mathcal{F}$, the case $u \in I_{\infty}(X) \cap \mathcal{F}$ can be established along similar lines. Clearly, each $u \in C_0(X)$ is proper. 
	 Fix $t > 0$. Since $u \in C_0(X)$, the level set $\Omega_t = \{x \in X\mid t < u(x) \}$ is finite. We estimate
	\begin{align*}
		g_{u}(t) &\leq \sum_{x \in \Omega_t} \sum_{y\in X\setminus\Omega_t} b(x,y) u(x) \, + \, \sum_{x \in \Omega_t} \sum_{y\in X\setminus\Omega_t} b(x,y) u(y).
	\end{align*}
	The first double sum is clearly finite since $\Omega_t$ is finite and the graph is locally summable. For the second double sum, note that for each $x \in \Omega_t$, 
	\begin{align*}
		\sum_{y\in X\setminus\Omega_t} b(x,y) u(y) \leq  t \sum_{y \in X} b(x,y) < \infty.
	\end{align*} 
	This finishes the proof. 	
\end{proof}

The remainder of this section is devoted to the proof of  Theorem~\ref{thm:abstractmain} above and the proof of Theorem~\ref{thm:CRITERION} from the introduction.

\medskip

For $T > 1$, define the functions $\lambda_T^+, \lambda_T^-:\R \rightarrow \R$ by
\begin{align*}
	\lambda_T^+(t) = \int_{\frac{1}{T}}^{1} \frac{1_{[0,t]}(s)}{s} ds, \quad \lambda_T^-(t) = \int_1^{T} \frac{1_{[0,t]}(s)}{s} ds,
\end{align*}
as well as $$\lambda_T = \lambda_T^{+} - \lambda_T^{-}.$$ 
Given $u \in \mathcal{F}$, we write $\lambda_T(u) = \lambda_T \circ u$ and for   a  positive $v \in \mathcal{F}$, we define the quadratic form on $C_c(X)$ by
\[
\mathcal{Q}_v(\varphi) = \frac{1}{2} \sum_{x,y \in X} b(x,y)v(x)v(y)\big( \varphi(x) - \varphi(y) \big)^2.
\] 

\begin{lemma} \label{lemma:key}
	Let $u \in \mathcal{F}$ be strictly positive,  proper, satisfy a weak bounded oscillation condition, superharmonic with $\mathcal{L}u \in \ell^1(X)$. Then, $ g_{u} $ is bounded, $\lambda_T(u) \in \mathcal{D}_0(\mathcal{Q}_{u^{1/2}})$ and $\mathcal{Q}_{u^{1/2}}\big( \lambda_T(u) \big) \leq 4 \log T \sup_{[0,\infty)} |g_u|$
	for each $T >1$.
\end{lemma}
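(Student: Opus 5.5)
The plan is to prove the three assertions in order: boundedness of $g_u$, that $\lambda_T(u)$ has finite support (which makes membership in $\mathcal{D}_0(\mathcal{Q}_{u^{1/2}})$ trivial), and then the energy estimate. The energy estimate reduces, via a pointwise inequality on each edge and Fubini, to an integral of $g_u$.

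\emph{Boundedness of $g_u$.} Let $t_0>0$ be such that $g_u(t_0)<\infty$. I will use the identity from \cite[Lemma~2.8]{KPP18} together with $\mathcal{L}u\ge 0$ and $\mathcal{L}u\in\ell^1(X)$.
For $s<t_0$ it gives $g_u(s)=g_u(t_0)+\sum_{s<u\le t_0}\mathcal{L}u\le g_u(t_0)+\|\mathcal{L}u\|_{\ell^1}$.
For $t>t_0$ it gives $g_u(t)=g_u(t_0)-\sum_{t_0<u\le t}\mathcal{L}u\le g_u(t_0)$.
Hence $\sup_{[0,\infty)}|g_u|\le g_u(t_0)+\|\mathcal{L}u\|_{\ell^1}<\infty$.

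\emph{Finite support.} Compute $\lambda_T$ explicitly:
\begin{itemize}
\item for $t<1/T$, both $\lambda_T^\pm(t)=0$;
\item for $t\ge T$, both $\lambda_T^+(t)$ and $\lambda_T^-(t)$ equal $\log T$.
\end{itemize}
So $\lambda_T$ vanishes outside $[1/T,T)$, and $\lambda_T(u)$ is supported in $u^{-1}([1/T,T])$. This set is finite by properness of $u$. Thus $\lambda_T(u)\in C_c(X)\subseteq\mathcal{D}_0(\mathcal{Q}_{u^{1/2}})$.

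\emph{Energy estimate.} Fix an edge with $a=u(y)<u(x)=b$. Writing $I=[1/T,T]$ and $\sigma(s)=1$ on $[1/T,1]$, $\sigma(s)=-1$ on $(1,T]$, we have
\[
\lambda_T(b)-\lambda_T(a)=\int_a^b \sigma(s)1_I(s)\,\frac{ds}{s}, \qquad \text{so} \qquad \big|\lambda_T(b)-\lambda_T(a)\big|\le \int_a^b 1_I(s)\,\frac{ds}{s}\le \log(b/a).
\]
Combining this with the elementary inequality $\sqrt{r}\log r\le r-1$ for $r=b/a\ge 1$ gives
\[
\sqrt{ab}\,\big(\lambda_T(b)-\lambda_T(a)\big)^2\le \sqrt{ab}\log(b/a)\int_a^b 1_I\frac{ds}{s}\le (b-a)\int_a^b 1_I(s)\frac{ds}{s}.
\]
The inequality $\sqrt r\log r\le r-1$ is the one fact to verify. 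Substituting $r=e^{2v}$ it becomes $2v\le e^{v}-e^{-v}$, i.e.\ $v\le\sinh v$ for $v\ge0$.

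Now symmetrize the double sum in $\mathcal{Q}_{u^{1/2}}$ over ordered pairs with $u(y)<u(x)$; pairs with equal values contribute nothing. Summing the edge bound and applying Tonelli gives
\[
\mathcal{Q}_{u^{1/2}}(\lambda_T(u))\le \int_{1/T}^T \frac{1}{s}\sum_{u(y)<s\le u(x)} b(x,y)(u(x)-u(y))\,ds=\int_{1/T}^T\frac{g_u(s)}{s}\,ds\le 2\log T\,\sup|g_u|.
\]
This is even better than the claimed bound $4\log T\sup|g_u|$.

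The main obstacle is finding the right pointwise inequality on each edge, i.e.\ how to split the square $(\lambda_T(b)-\lambda_T(a))^2$ so that the weight $\sqrt{u(x)u(y)}$ is absorbed and one factor $u(x)-u(y)$ remains. Without this, Fubini would produce $\int g_u\,ds$ and a bound of order $T$ rather than the integral $\int g_u(s)\,ds/s$ of order $\log T$. Bounding one factor by $\log(b/a)$, keeping the other as the integral, and invoking $\sqrt r\log r\le r-1$ resolves this.
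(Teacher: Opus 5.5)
Your proof is correct and has the same skeleton as the paper's. You get $\lambda_T(u)\in C_c(X)$ from properness. You use an edgewise inequality that trades the weight $\sqrt{u(x)u(y)}$ for one factor $u(x)-u(y)$. Tonelli then brings everything to $\int_{1/T}^T g_u(t)\,t^{-1}\,dt$. Your boundedness argument for $g_u$ is the paper's ``immediate'' remark written out; the endpoint $s=0$ is covered by $g_u(0)=0$.

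The difference is in the edgewise step. The paper splits $\lambda_T=\lambda_T^{+}-\lambda_T^{-}$ into monotone pieces and applies \cite[Lemma~10]{HKPIII} to each. It then recombines with $(\alpha-\beta)^2\le 2\alpha^2+2\beta^2$, which reduces everything to $\mathcal{Q}(u,\lambda_T^{*})$ with $\lambda_T^{*}=\lambda_T^{+}+\lambda_T^{-}$. You work with the non-monotone $\lambda_T$ directly. You bound one factor of $|\lambda_T(b)-\lambda_T(a)|$ by $\log(b/a)$ and the other by the variation $\int_a^b 1_I(s)\,ds/s=\lambda_T^{*}(b)-\lambda_T^{*}(a)$. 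You also prove the needed geometric--logarithmic mean inequality $\sqrt{ab}\,\log(b/a)\le b-a$ yourself. This makes the argument self-contained and avoids the loss in the recombination step.

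The constant $1$ in that inequality is sharp as $b/a\downarrow 1$. So the factor $2$ on the left of the paper's displayed pointwise estimate cannot hold in general: take $1/T\le u(y)<u(x)\le 1$ with $u(x)/u(y)$ close to $1$. With the correct constant, the paper's splitting route gives exactly the stated $4\log T\sup|g_u|$. Your route genuinely gives $2\log T\sup|g_u|$.
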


\begin{proof}
	We point out that boundedness  of $ g_{u} $ is immediate from the weak bounded oscillation condition, $ \mathcal{L}u\in \ell^1(X) $ 	and	the remarks made above Theorem~\ref{thm:abstractmain}. 
	We abbreviate $v=u^{1/2}$. 
	Take $T > 1$. Observe first that if $t < 1/T$, then $\lambda_T^{+}(t) = \lambda_T^{-}(t) = 0$ and if $t > T$, then $\lambda_T^{+}(t) = \lambda_T^{-}(t) = \log T$. So, $\lambda_T(t) = 0$ in either case. Since $u$ is proper, we have $\lambda_T(u) \in C_c(X) \subseteq \mathcal{D}_0(\mathcal{Q}_v)$.  
	Using \cite[Lemma~10]{HKPIII}, we compute 
	\begin{multline*}
	 	2v(x)v(y) \big| \lambda_T^{\pm}(u)(x) - \lambda_T^{\pm}(u)(x)  \big|^2 \\ 
		 \leq |u(x)-u(y)| \big| \lambda_T^{\pm}(u)(x) - \lambda_T^{\pm}(u)(x)  \big| \\
		 = \big( u(x)-u(y) \big) \big(  \lambda_T^{\pm}(u)(x) - \lambda_T^{\pm}(u)(x)  \big),
	\end{multline*}
	where for the last equality we used that $\lambda_{T}^{\pm}$ is monotonically increasing. We further have $u \lambda_{T}^{\pm}(u) \geq 0$ since $\lambda_{T}^{\pm}$ is monotonically increasing and $\lambda_T^{\pm}(0) =0$. Thus, with  all summands being non-negative, the values
	$\mathcal{Q}(u,\lambda_{T}^{\pm}(u))$ can be formally defined (and apriori might be infinite). We obtain
	\begin{align*}
		2\mathcal{Q}_{v} \big( \lambda_T^{\pm}(u) \big) &\leq \frac{1}{2} \sum_{x,y \in X} b(x,y) \big( u(x) - u(y) \big) \big( \lambda_T^{\pm}(u)(x) - \lambda_T^{\pm}(u)(x) \big) \\
		&= \mathcal{Q}(u, \lambda_T^{\pm}(u)). 
	\end{align*}
	Consequently, 
	\begin{multline*}
		\mathcal{Q}_v\big( \lambda_T(u) \big) \leq 2\mathcal{Q}_v\big( \lambda_T^{+}(u) \big) + 2\mathcal{Q}_v\big( \lambda_T^{-}(u) \big) \\
		\leq  \mathcal{Q}\big(u, \lambda_T^{+}(u) \big) + \mathcal{Q}\big(u, \lambda_T^{-}(u) \big) 
		= \mathcal{Q}\big(u, \lambda_T^{*} (u) \big)
	\end{multline*}
	with $\lambda_T^{*} = \lambda_T^{+} + \lambda_T^{-}$. 
	Moreover, if $u(x)> u(y)$, then 
	\[
	\lambda_T^{*}(u)(x) - \lambda_T^{*}(u)(y) = \int_{1/T}^T \frac{1_{(u(y),u(x)]}(t)}{t}\,dt.
	\]
	By Fubini's theorem, 
	\begin{align*}
		\mathcal{Q}\big( u, \lambda_T^{*}(u)\big) &= \sum_{x,y\in X, u(x)>u(y)} b(x,y) \big( u(x) - u(y) \big)\big( \lambda_T^{*}(u)(x) - \lambda_T^{*}(u)(y) \big) \\
		&= \int_{1/T}^T \frac{1}{t} \sum_{x,y\in X, u(x)> u(y)} b(x,y) \big( u(x) - u(y) \big) 1_{(u(y),u(x)]}(t)\, dt \\
		&= \int_{1/T}^T \frac{g_{u}(t)}{t}\,dt \\
		&\leq 2 \log T \sup_{[0,\infty)} |g_u|. 
	\end{align*}
	This finishes the proof.  
\end{proof}

Using a result from \cite{HKPIII}, we can now show that the weight $w$ in Theorem~\ref{thm:abstractmain} is a critical Hardy weight. 

\begin{proposition}
Let $u \in \mathcal{F}$ be strictly positive,  proper,  superharmonic with $\mathcal{L}u \in \ell^1(X)$ and satisfy a weak bounded oscillation condition. Then, $w=\mathcal{L}u^{1/2} / u^{1/2}$ is a critical Hardy weight.
\end{proposition}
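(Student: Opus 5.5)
We will prove the Proposition in two steps. First we show that $w$ is a Hardy weight. Then we show criticality by exhibiting a null-sequence for the ground state transform, using the functions $\lambda_T(u)$ from Lemma~\ref{lemma:key}.

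\emph{Hardy inequality.} Since $u>0$ is superharmonic, the supersolution construction applies. By the elementary inequality for square roots (e.g.\ \cite[Lemma~10]{HKPIII} or the corresponding statement in \cite{KPP18}), $v=u^{1/2}$ is again superharmonic. Moreover $v\in\mathcal{F}$, since $v\le 1+u$. Hence $w=\mathcal{L}v/v\ge0$, and $v$ is a positive (super)solution of $(\mathcal{L}-w)v=0$. The Agmon--Allegretto--Piepenbrink theorem for graphs then gives $\mathcal{Q}-w\ge0$ on $C_c(X)$. More concretely, the ground state representation
\[
\mathcal{Q}(\varphi)-w(\varphi)=\mathcal{Q}_{v}(\varphi/v),\qquad \varphi\in C_c(X),
\]
holds, with $\mathcal{Q}_v$ the form defined before Lemma~\ref{lemma:key}. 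The killing term cancels in this identity because $w$ absorbs $c$. The identity is obtained by expanding and using $\mathcal{L}v=wv$; all sums converge since $\varphi$ has finite support and $v\in\mathcal{F}$. Non-triviality of $w$ is not needed for the plan below; if the paper requires it, it follows because $w\equiv0$ would make $v$ harmonic, and this is excluded once $\mathcal{L}u\neq0$ somewhere.

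\emph{Criticality.} By the ground state representation, $w$ is critical if and only if the form $\mathcal{Q}_v$ is recurrent, i.e.\ $1\in\mathcal{D}_0(\mathcal{Q}_v)$ with $\mathcal{Q}_v(1)=0$. A standard characterization (see \cite{KPP18,HKPIII}) states this as follows: there is a sequence $e_n\in C_c(X)$ with $e_n\to1$ pointwise and $\mathcal{Q}_v(e_n)\to0$. We build $e_n$ from Lemma~\ref{lemma:key} by setting
\[
e_T=\frac{\lambda_T(u)}{\log T}\quad\text{or rather}\quad e_T=\frac{\lambda^{+}_T(u)+\log T-\lambda^{-}_T(u)}{\log T}\cdot\text{(suitably cut off)}.
\]
A cleaner choice is to use the log-scale tent $\psi_T(t)=1-|\log t|/\log T$ for $t\in[1/T,T]$ and $0$ otherwise. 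This has the same increments as $\lambda_T$, namely $d\psi_T=\pm dt/(t\log T)$. The proof of Lemma~\ref{lemma:key} gives
\[
\mathcal{Q}_v(\psi_T(u))\le \frac{1}{(\log T)^2}\int_{1/T}^T\frac{g_u(t)}{t}\,dt\le\frac{2\sup|g_u|}{\log T}\to0 .
\]
Properness of $u$ gives $\psi_T(u)\in C_c(X)$, and $\psi_T(u)\to1$ pointwise as $T\to\infty$. Hence $1\in\mathcal{D}_0(\mathcal{Q}_v)$ with $\mathcal{Q}_v(1)=0$, which means $\mathcal{Q}_v$ is recurrent and $w$ is critical. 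This is exactly the route suggested by citing "a result from \cite{HKPIII}": the result converts the estimate on $\mathcal{Q}_{u^{1/2}}(\lambda_T(u))$ into criticality.

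\emph{Main obstacle.} The main obstacle is justifying the ground state representation and the recurrence characterization for non-locally finite graphs with killing term. In particular, one must check that the relevant sums converge absolutely when $b$ is merely locally summable. I would handle this by invoking the general statements in \cite{HKPIII}, which are formulated for locally summable graphs, rather than reproving them. The only input specific to our setting is the uniform bound on $g_u$, which Lemma~\ref{lemma:key} already provides.
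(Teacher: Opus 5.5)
Your proposal is correct and follows the paper's route. Superharmonicity of $u^{1/2}$ gives the Hardy inequality, and the bound $\mathcal{Q}_{u^{1/2}}(\lambda_T(u))\le C\log T$ from Lemma~\ref{lemma:key} gives criticality; the paper simply cites \cite[Proposition~14]{HKPIII} for the step you unpack via the ground state representation and the null-sequence $\psi_T(u)$. Your tent function is exactly $\psi_T=\lambda_T/\log T$, so the muddled ``or rather \dots suitably cut off'' display can be dropped.
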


\begin{proof}
	Since $u$ is superharmonic, so is $v=u^{1/2}$, cf.~\cite[Corollary 2.3]{KPP18} and, thus, $w$ is a Hardy weight. 
	It follows from \cite[Proposition~14]{HKPIII} that 
	 $w$ is critical if $\lambda_T(u) \in \mathcal{D}_0(\mathcal{Q}_{v})$ and there is some $C> 0$ such that $\mathcal{Q}_{v}\big( \lambda_T(u) \big) \leq C \log T$ for each $T > 1$. These assumptions in turn are satisfied due to Lemma~\ref{lemma:key}.  
\end{proof}

We are now in position to prove the main theorem of this section.

\begin{proof}[Proof of Theorem~\ref{thm:abstractmain}]
	The previous proposition already yields that $w$ is a critical Hardy weight. We will now make use of the remaining assumptions, i.e., in the second list of bullet points in Theorem~\ref{thm:abstractmain}. 
	
	First, observe for $c\neq0$, that $\sum_{X} \mathcal{L}u = \sum_{X} cu$ is a necessary  condition 
	for positive-criticality by \cite[Proposition~21 and Proposition~22]{HKPII}, so $\sum_{X} \mathcal{L}u \neq \sum_{X} cu$ is a sufficient  condition 
	for null-criticality. 
	
	It remains to prove null-criticality for inifinite graphs where $c=0$ and $\sum_X \mathcal{L}u =0$. Since $u$ is superharmonic, the latter condition implies that $u$ is actually harmonic. Consequently, the function $g_u$  is constant.	Since $X$ is infinite and $u$ is proper, $u$ is non-constant, and so, $g_u > 0$. Suppose now by contradiction that $w$ is not null-critical, i.e., positive-critical. Then $v= u^{1/2} \in \mathcal{D}_0$ by \cite[Proposition~21]{HKPII} and by the variant of the coarea formula given in \cite[Theorem~2.6]{KPP18}, we obtain 
	\begin{align*}
		\int_0^{\infty} g_v(t)\,dt \, = \, \mathcal{Q}(v) < \infty.
	\end{align*} 
	It follows from this that $\liminf_{t \to \infty} t g_v(t) = 0$ and the dominated convergence theorem yields
	\[
	\lim_{t \to \infty} \sum_{x,y\in X, v(y) < t \leq v(x)} b(x,y)\big( v(x)-v(y) \big)^2 \, = \, 0.
	\]
	Consider now $x,y \in X$ along with $t > 0$ such that $v(y) < t \leq v(x)$. If $v(y) \leq (1/2)v(x)$, then 
	\[
	\frac{u(x)-u(y)}{\big( v(x)-v(y) \big)^2} = \frac{v(x)+ v(y)}{v(x)-v(y)} \leq 3,
	\]
	and if $v(y) > (1/2)v(x)$, then it follows from the mean value theorem that 
	\[
	\frac{v(x)-v(y)}{u(x)-u(y)} \geq \frac{1}{2v(x)} \geq \frac{1}{4v(y)} \geq \frac{1}{4t}.	
	\]
	In either case, 
	\[
	u(x)- u(y) \leq 4t\big( v(x)-v(y) \big) + 3 \big(v(x)-v(y) \big)^2.
	\]
	So for each $t > 0$, we have 
	\begin{align*}
		g_u(t^2) &= \sum_{x,y\in X, u(y) <t^2 \leq u(x)} b(x,y)\big( u(x)-u(y) \big) \\
		&\leq \sum_{x,y \in X,v(y) <t \leq v(x)} b(x,y)\left( 4t\big( v(x)-v(y) \big) + 3 \big(v(x)-v(y) \big)^2  \right) \\
		&= 4tg_v(t) \, + \, 3 \sum_{x,y\in X, v(y) <t \leq v(x)}b(x,y)\big(v(x)-v(y) \big)^2. 
	\end{align*}
	The previous considerations imply $\liminf_{t \to \infty} g_u(t^2) =0$. However, this is impossible since $g_u$ was shown to be constant and strictly positive. 
\end{proof}

We can now prove the abstract criterion formulated in the introduction.

\begin{proof}[Proof of Theorem~\ref{thm:CRITERION}]
	We observe first that every connected component of the restriction of the graph to $X \setminus K$ is an infinite set.
	 Indeed, since $K \neq X$, the restriction of $u$ on a connected component $Y$ of $X \setminus K$ is a non-trivial harmonic function for the Dirichlet restriction of $\mathcal{L}$ to $Y$ as defined in \cite[p.\@~118]{KLW}. However, as $K \neq \emptyset$, such a function cannot exist for finite $Y$, since the Dirichlet restriction on a proper finite subset of a connected graph has only positive eigenvalues, see Proposition~1.20~(a) and its proof in \cite{KLW}. Next, we define a killing term $c_K\colon X \setminus K \to [0, \infty)$ by setting $c_K(x) = \sum_{y \in K}b(x,y)$. 
	We denote the restriction of $b$ to $X\setminus K  \times X \setminus K$ by $b_K$. For the Laplacian $\mathcal{L}_K$ associated with 
	$(b_K,c_K)$, we have $\mathcal{L}_K u_{|X \setminus K} = \mathcal{L} u$ on $X \setminus K$, and in particular, $u_K= u_{|X \setminus K}$ is a strictly positive harmonic function for $\mathcal{L}_K$. With $K \neq \emptyset$, the killing term $c_K$ is a non-trivial function on every connected component of $X \setminus K$ by connecteness of $b$. Consequently, we have $0=\sum \mathcal{L}_K u_K < \sum c_K u_K$. Now, the statement follows from Theorem~\ref{thm:abstractmain}, applied to the connected components of the graph $(b_K,c_K)$, together with the harmonic function $u_K$. 
\end{proof}

\section{Expansion of $w_{\mu}$ for $d\geq 3$}\label{sec:proofdge3}
In this section we show an expansion for  
\[
w_{\mu}(x) = \frac{\Delta_{\mu} G_{\mu}^{1/2}(x)}{G_{\mu}^{1/2}(x)}.
\]
for  {\rw}s $\mu$ over $\Z^d$ with $d \geq 3$. For the simple random walk, a  corresponding expansion has already been obtained in \cite{KPP18}.

\medskip

We first need some preparation on the transformation of the distance function. Define the bilinear map 
\[
\langle \cdot,\, \cdot \rangle_{\mu}\colon\R^d \times \R^d \to \R, \qquad \langle x,\, y \rangle_{\mu} = (\det \Sigma_{\mu})^{\frac{1}{d}} \, \langle x,\, \Sigma_{\mu}^{-1}y \rangle.
\]
Note that $\langle \cdot,\, \cdot \rangle_{\mu}$ defines a scalar product on $\R^d$ since $\Sigma_{\mu}^{-1}$ is symmetric and positive definite. In the following, we write $$\sigma_{\mu} = (\det \Sigma_{\mu})^{\frac{1}{2d}}.$$
Recall that we have set $$x_{\mu} = \sigma_{\mu} \, \Sigma_{\mu}^{-1/2}x$$ before. We write $|\cdot|_{\mu}$ for the norm associated with $\langle \cdot, \cdot \rangle_{\mu}$ and observe that this definition is consistent with the notion introduced before Theorem~\ref{thm:MAIN}, since
\begin{align*} 
	\langle x,y \rangle_{\mu} &= \sigma_{\mu}^2 \, {\langle x,\, \Sigma_{\mu}^{-1}y \rangle} =   \, {\langle \sigma_{\mu} \Sigma_{\mu}^{-1/2} x,\, \sigma_{\mu}\Sigma_{\mu}^{-1/2}y \rangle} =  \langle x_{\mu}, y_{\mu} \rangle.  
\end{align*} 
Note further that by the equivalence of the norms ${|\cdot|}$ and ${|\cdot|}_{\mu}$, we have $\mathcal{O}\big(|x|_{\mu}^{\alpha} (\log|x|_{\mu})^{\beta}  \big) = \mathcal{O}\big( |x|^{\alpha} (\log|x|)^{\beta} \big)$ for all $\alpha,\beta \leq 0$. 
We will use this fact repeatedly below.

We will use the following special case of the fine asymptotics for  $G_{\mu}$ by Uchiyama.

\begin{theorem}[Special case of {\cite{Uch98}}] \label{thm:UchiyamaFukai2}
	Let $\mu$ be a {\rw} over $\Z^d$, $d \geq 3$. If $\|\mu\|_{d+3} <\infty$, then, as $|x| \to \infty$, 
	\[
	G_{\mu}(x) = \frac{c_{d,\mu}}{|x|_{\mu}^{d-2}} + \frac{\vartheta(x_{\mu})}{|x|_{\mu}^d} + \mathcal{O}(|x|^{-(d+2)}),
	\] 
	where $c_{d,\mu}$ is a positive constant and $\vartheta(x) = q(\omega_i^x)$, where $q:\R^d \to \R$ is a polynomial and $\omega_i^x$, $1 \leq i \leq d$, is the $i$-th coordinate of $x/|x|$.
\end{theorem}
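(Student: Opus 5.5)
The statement is a specialization of Uchiyama's fine asymptotics for the Green function of a random walk with finite moments \cite{Uch98}. We therefore do not prove it from scratch. The plan is to show that our normalization and notation translate into his hypotheses and his form of the expansion.

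\emph{Step 1: reduce to the normalized walk.} By definition $G_\mu$ only involves the random walk $\mu_0=\mu/\theta$ with $\theta=\sum_x\mu(x)$. So $G_\mu$ is the Green function of $\mu_0$, and $\|\mu_0\|_{d+3}=\theta^{-1}\|\mu\|_{d+3}<\infty$. Moreover $\Sigma_\mu=\theta\,\Sigma_{\mu_0}$. The quantities $x_\mu$ and $|x|_\mu$ are invariant under scaling $\Sigma$, because the factor $(\det\Sigma)^{1/(2d)}$ compensates $\Sigma^{-1/2}$. Hence $|x|_\mu=|x|_{\mu_0}$ and $x_\mu=x_{\mu_0}$, and we may assume $\theta=1$. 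Aperiodicity in our sense (the support generates $\Z^d$), together with symmetry, zero mean and a finite $(d+3)$-rd moment, should be exactly the standing assumptions in \cite{Uch98}. I would check here whether Uchiyama requires only that the support generates $\Z^d$, or a stronger aperiodicity notion. For the Green function (as opposed to transition probabilities), the additive-group condition is the relevant one, since $1-\widehat{\mu}_0$ then vanishes only at $\xi=0$ on $\mathbb{T}^d$.

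\emph{Step 2: rewrite Uchiyama's expansion.} Uchiyama's result gives
\[
G(x)=\frac{\Gamma(\tfrac d2-1)}{2\pi^{d/2}\sqrt{\det\Sigma}}\,\langle x,\Sigma^{-1}x\rangle^{-(d-2)/2}
+\frac{P(x)}{\langle x,\Sigma^{-1}x\rangle^{m}}+\mathcal{O}(|x|^{-(d+2)}),
\]
where $P$ is a polynomial and the second term is homogeneous of degree $-d$. Odd-order corrections drop out because $\mu$ is symmetric, so no term of order $|x|^{1-d}$ appears. Using $\langle x,\Sigma^{-1}x\rangle=\sigma_\mu^{-2}|x|_\mu^2$, the leading term becomes $c_{d,\mu}|x|_\mu^{2-d}$ with an explicit positive $c_{d,\mu}$. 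For the correction term, substitute $x=\sigma_\mu^{-1}\Sigma_\mu^{1/2}x_\mu$ and write $x_\mu=|x|_\mu\,\omega^{x_\mu}$. By homogeneity the term equals $|x|_\mu^{-d}$ times $P(\sigma_\mu^{-1}\Sigma_\mu^{1/2}\omega)$ up to a constant factor, because the denominator becomes a constant power of $|\omega|=1$. This is a polynomial $q$ in the coordinates of $\omega^{x_\mu}$, which gives $\vartheta(x_\mu)$. Finally, the error $\mathcal{O}(|x|^{-(d+2)})$ is unaffected by the equivalence of $|\cdot|$ and $|\cdot|_\mu$.

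\emph{Main obstacle.} The delicate part is bookkeeping rather than analysis. One has to match the precise moment hypothesis under which \cite{Uch98} gives the error $\mathcal{O}(|x|^{-d-2})$. If Uchiyama's condition is stated as $E|X|^{d+2+\delta}$ or in terms of $|x|^{d+2}\log|x|$, then $\|\mu\|_{d+3}<\infty$ implies it. One also has to confirm that his second-order term is indeed of the homogeneous form above, i.e.\ a polynomial in $x$ divided by a power of the quadratic form. If it is instead presented via Fourier-side cumulants (third and fourth moments of $\mu$), I would expand that expression and observe directly that it is homogeneous of degree $-d$ and polynomial in $x/|x|_\mu$.
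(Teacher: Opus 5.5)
Your proposal is correct and takes the same route as the paper, whose proof is just a citation of Theorem~2 in \cite{Uch98} together with Uchiyama's remark on p.~217 that the odd-order terms (of orders $|x|^{1-d}$ and $|x|^{-1-d}$) vanish for symmetric walks. Your reduction to the normalized walk $\mu_0$ (via $x_{\mu}=x_{\mu_0}$) and your rewriting of the degree $-d$ homogeneous correction as $|x|_\mu^{-d}$ times a polynomial in $x_\mu/|x_\mu|$ spell out the bookkeeping that the paper leaves implicit.
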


\begin{remark}
	For the simple random walk, the polynomial $q$ is given by $q(z) = c_d^{(2)} \! \big( \sum_{j=1}^d z_j^4 - \frac{3}{d+2} \big)$ for some constant $c_d^{(2)} > 0$.
\end{remark}

\begin{proof}
	This follows immediately from \cite[Theorem~2]{Uch98} in combination with the comment on symmetric random walks made on p.~217. 
\end{proof}

We obtain the following expansion for $w_{\mu}$.

\begin{proposition} \label{prop:asymp_w_d>2}
	Suppose that $\mu$ is a {\rw} on $\mathbb{Z}^d$ for $d \geq 3$ such that $\|\mu\|_{d+3} < \infty$. Then, asymptotically as $|x| \to \infty$,
	\begin{align*}
		w_{\mu}(x) = \frac{(d-2)^2 \sigma_{\mu}^2}{8 |x|_{\mu}^2} + \mathcal{O}\big(|x|^{-4}\big).
	\end{align*}
\end{proposition}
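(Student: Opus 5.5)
The plan is to rewrite $w_\mu$ as a sum of squares, so that the harmonicity of $G_\mu$ removes the non-quadratic part, and then to Taylor expand the asymptotics of Theorem~\ref{thm:UchiyamaFukai2}. Write $\theta=\sum_v\mu(v)$. Up to a positive constant, $G_\mu$ is the Green function of $\Delta_\mu$, so $\Delta_\mu G_\mu(x)=0$ for $x\neq 0$. For every $r\ge 0$ we have $1-\sqrt r=\tfrac12(1-r)+\tfrac12(1-\sqrt r)^2$. Applying this with $r=G_\mu(y)/G_\mu(x)$ and summing against $\mu(x-y)$ gives, for $x\neq0$,
\[
w_\mu(x)=\frac{\Delta_\mu G_\mu(x)}{2G_\mu(x)}+\frac{1}{2G_\mu(x)}\sum_{y}\mu(x-y)\big(G_\mu^{1/2}(x)-G_\mu^{1/2}(y)\big)^2
=\frac{1}{2G_\mu(x)}\sum_{h}\mu(h)\big(f(x)-f(x+h)\big)^2,
\]
where $f=G_\mu^{1/2}$. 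By Theorem~\ref{thm:UchiyamaFukai2}, $G_\mu(x)\asymp|x|^{-(d-2)}$. It therefore suffices to show that the sum equals $\tfrac{(d-2)^2\sigma_\mu^2 c_{d,\mu}}{4}|x|_\mu^{-d}+\mathcal O(|x|^{-d-2})$. The first term of $G_\mu$ then gives the claimed main term, since $\tfrac{(d-2)^2\sigma_\mu^2c_{d,\mu}|x|_\mu^{-d}}{4\cdot 2c_{d,\mu}|x|_\mu^{-(d-2)}}=\tfrac{(d-2)^2\sigma_\mu^2}{8|x|_\mu^2}$. The relative correction $\mathcal O(|x|^{-2})$ in $G_\mu$ only produces an $\mathcal O(|x|^{-4})$ error.

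First I split the sum into the near region $|h|\le|x|/2$ and the far region $|h|>|x|/2$. On the far region I use $(f(x)-f(x+h))^2\le 2G_\mu(x)+2G_\mu(x+h)$ together with boundedness of $G_\mu$ and the tail bound $\sum_{|h|>|x|/2}\mu(h)\le 2^{d+3}\|\mu\|_{d+3}|x|^{-(d+3)}$. After division by $G_\mu(x)$ this contributes $\mathcal O(|x|^{-5})$. On the near region I write $f=F+R$ with
\[
F(z)=c_{d,\mu}^{1/2}|z|_\mu^{-(d-2)/2}\Big(1+\frac{\vartheta(z_\mu)}{2c_{d,\mu}|z|_\mu^2}\Big),
\]
a smooth function away from $0$ with $\partial^\alpha F(z)=\mathcal O(|z|^{-(d-2)/2-|\alpha|})$. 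The remainder satisfies $R(z)=\mathcal O(|z|^{-d/2-3})$, uniformly for $|z|\ge|x|/2$. Taylor expansion of $F$ to third order around $x$ gives
\[
F(x)-F(x+h)=-\nabla F(x)\cdot h-\tfrac12 D^2F(x)[h,h]+\mathcal O(|h|^3|x|^{-d/2-2}).
\]

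Squaring, the cubic term $(\nabla F\cdot h)\,D^2F[h,h]$ is odd in $h$, so by the symmetry $\mu(h)=\mu(-h)$ its sum over the symmetric region $|h|\le|x|/2$ vanishes. All remaining cross and error terms are $\mathcal O\big((|h|^2+|h|^4)|x|^{-d-2}\big)$, which are summable against $\mu$ because $\|\mu\|_4<\infty$. The terms involving $R$ are bounded by $|x|^{-d/2}|h|\cdot|x|^{-d/2-3}$ and $|x|^{-d-6}$, which is even better. Replacing $\sum_{|h|\le|x|/2}$ by the full sum in the quadratic term costs only $|x|^{-d}\sum_{|h|>|x|/2}\mu(h)|h|^2=\mathcal O(|x|^{-d-2})$. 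The main contribution is therefore $\sum_h\mu(h)(\nabla F(x)\cdot h)^2=\langle\nabla F(x),\Sigma_\mu\nabla F(x)\rangle$. The $\vartheta$-part of $F$ changes $\nabla F$ only by $\mathcal O(|x|^{-d/2-2})$, so this also lies in the error.

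The final computation uses $\nabla|z|_\mu=\sigma_\mu^2\Sigma_\mu^{-1}z/|z|_\mu$. This gives
\[
\langle\nabla F,\Sigma_\mu\nabla F\rangle=\tfrac{(d-2)^2}{4}c_{d,\mu}|x|_\mu^{-d-2}\sigma_\mu^4\langle x,\Sigma_\mu^{-1}x\rangle=\tfrac{(d-2)^2}{4}c_{d,\mu}\sigma_\mu^2|x|_\mu^{-d},
\]
because $\sigma_\mu^2\langle x,\Sigma_\mu^{-1}x\rangle=|x|_\mu^2$. I expect the main obstacle to be the bookkeeping in the near region. The points to check are that the third-order terms cancel exactly by symmetry, which matters because they are not a priori $\mathcal O(|x|^{-4})$, and that the moment assumption $\|\mu\|_{d+3}<\infty$ suffices both for the tail estimate and for uniformity of the Uchiyama remainder on $\{|z|\ge|x|/2\}$.
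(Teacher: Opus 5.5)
Your proof is correct, and it reaches the result by a different route from the paper.

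\textbf{The paper's route.} The paper writes $w_\mu(x)=\sum_v\mu(v)\bigl(1-\sqrt{1+r(x,v)}\bigr)$ with $r=(G_\mu(x+v)-G_\mu(x))/G_\mu(x)$. It then proceeds as follows:
\begin{itemize}
\item It expands $1-\sqrt{1+r}=-\tfrac12 r+\tfrac18 r^2-\tfrac1{16}r^3+\mathcal O(r^4)$.
\item It uses Lemma~\ref{lem:power_expansions_G} to split $r$, $r^2$ and $r^3$ into odd and even homogeneous pieces.
\item It removes the linear term by harmonicity, after separately estimating the tail $\sum_{v\notin A_x}\mu(v)r(x,v)$.
\item It kills the odd pieces by symmetry.
\item It evaluates the quadratic term via Lemma~\ref{lem:normcovariance}, after normalizing $\mu$.
\end{itemize}

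\textbf{Your route.} The identity $1-\sqrt r=\tfrac12(1-r)+\tfrac12(1-\sqrt r)^2$ gives the exact sum-of-squares representation $w_\mu(x)=\frac{1}{2G_\mu(x)}\sum_h\mu(h)\bigl(f(x)-f(x+h)\bigr)^2$ for $x\neq 0$. This buys several simplifications:
\begin{itemize}
\item The first-order term vanishes identically rather than up to an error.
\item You only need a second-order Taylor expansion, plus one odd cubic term, of the smooth approximant $F$ of $\sqrt{G_\mu}$, instead of the $r,r^2,r^3$ bookkeeping.
\item The main term appears directly as $\langle\nabla F,\Sigma_\mu\nabla F\rangle$, with no normalization of $\mu$.
\item The far region is controlled more sharply: $\mathcal O(|x|^{-5})$, compared with the paper's $\mathcal O(|x|^{-4})$ bound via $\|\mu\|_{d+2}$.
\end{itemize}

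\textbf{Points you handled correctly.} Putting the $\vartheta$-correction into $F$ is exactly what makes the remainder $R=\mathcal O(|z|^{-d/2-3})$ small enough. Without it, the cross term would only be $\mathcal O(|x|^{-3})$ after division by $G_\mu$. You also correctly identify that the cubic term $(\nabla F\cdot h)\,D^2F(x)[h,h]$ must cancel exactly by symmetry of $\mu$ and of the near region, since it is not a priori within $\mathcal O(|x|^{-4})$.

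\textbf{Comparison.} The paper's formulation is built to run in parallel with the $d=2$ case (Lemma~\ref{lem:power_expansions}). Your identity would transfer there just as well, because $a_\mu$ is harmonic on $\Z^2\setminus\{0\}$.
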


\begin{remark}
	For the simple random walk, with the explicit formula of the higher-order term (see the Remark following Theorem~\ref{thm:UchiyamaFukai2}) it should be possible to derive the asymptotic expansion for $w = \frac{\Delta G^{1/2}}{G^{1/2}}$ with one more higher order term. After consultation with AI we conjecture it to be
	\begin{align*}
		w_{\mu}(x) = \frac{(d-2)^2}{8 d|x|^2} + \frac{Q(\omega_i^x)}{\abs{x}^4} + \mathcal{O}\big(|x|^{-6}\big),
	\end{align*}
	with $Q$ given as 
	\begin{align*}
		Q(z) = \frac{(d-2)^2}{16d} \left( \frac{5(d+2)^2}{8} \sum_{j=1}^d z_j^4 -(2d+3) \right).
	\end{align*}
	Asymptotically, this would make this discrete weight larger than its continuous counterpart on the coordinate axes, and smaller on the diagonals. 
\end{remark}

We need the following lemma which will also be useful in the next section when we consider the $d=2$ case. 

\begin{lemma}\label{lem:normcovariance}
	Let $\mu$ be a {\rw} over $\Z^d$ with $\|\mu\|_2 < \infty$. Then, for all $x \in \Z^d$, $x \neq 0$,
	\[
	\sum_{v \in \Z^d} \mu(v) \frac{\langle x_{\mu}, v_{\mu} \rangle^2}{|x_{\mu}|^2} = \sigma_\mu^2. 
	\]
\end{lemma}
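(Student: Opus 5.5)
The identity is pure linear algebra: I will pull the quadratic form in $v$ inside the sum, so that the sum collapses to the covariance matrix $\Sigma_\mu$. After that, the normalization in $x_\mu$ makes everything cancel.

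\textbf{Rewrite the inner product.} By the consistency computation preceding Theorem~\ref{thm:UchiyamaFukai2}, we have
\[
\langle x_\mu, v_\mu\rangle = \sigma_\mu^2 \langle x, \Sigma_\mu^{-1} v\rangle = \sigma_\mu^2 \langle \Sigma_\mu^{-1}x, v\rangle ,
\]
using the symmetry of $\Sigma_\mu^{-1}$. Set $y = \Sigma_\mu^{-1}x \in \R^d$. Then
\[
\sum_{v} \mu(v)\langle x_\mu, v_\mu\rangle^2 = \sigma_\mu^4 \sum_v \mu(v)\langle y, v\rangle^2 .
\]
The series converges absolutely, since $\langle y,v\rangle^2\le |y|^2|v|^2$ and $\|\mu\|_2<\infty$.

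\textbf{Collapse the sum to $\Sigma_\mu$.} By the defining property of the covariance matrix,
\[
\sum_v \mu(v)\langle y,v\rangle^2 = \langle y, \Sigma_\mu y\rangle .
\]
Substituting $y=\Sigma_\mu^{-1}x$ gives $\langle y, \Sigma_\mu y\rangle = \langle \Sigma_\mu^{-1}x, x\rangle = \langle x, \Sigma_\mu^{-1} x\rangle$.

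\textbf{Compute the denominator.} Again by the consistency computation,
\[
|x_\mu|^2 = \langle x,x\rangle_\mu = \sigma_\mu^2\langle x,\Sigma_\mu^{-1}x\rangle .
\]
This is nonzero for $x\neq 0$ because $\Sigma_\mu^{-1}$ is positive definite.

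\textbf{Divide.} The ratio is
\[
\frac{\sigma_\mu^4\langle x,\Sigma_\mu^{-1}x\rangle}{\sigma_\mu^2\langle x,\Sigma_\mu^{-1}x\rangle} = \sigma_\mu^2 ,
\]
which is the claim.

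\textbf{Potential obstacles.} There is essentially none. The only points requiring care are the positive definiteness of $\Sigma_\mu$ and the absolute convergence of the series. Positive definiteness holds because $\mu$ is aperiodic, so its support spans $\R^d$ and $\langle y,\Sigma_\mu y\rangle>0$ for $y\neq0$; this is already implicitly used in the paper's definition of $x_\mu$. Absolute convergence was checked in the first step.
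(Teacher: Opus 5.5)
Your proof is correct and is essentially the paper's argument. The paper also rewrites $\langle x_\mu,v_\mu\rangle=\sigma_\mu^2\langle x,\Sigma_\mu^{-1}v\rangle$, uses $\sum_v\mu(v)\langle x,Av\rangle^2=x^TA\Sigma_\mu Ax$ with $A=\Sigma_\mu^{-1}$ (equivalent to your substitution $y=\Sigma_\mu^{-1}x$), and cancels against $|x_\mu|^2=\sigma_\mu^2\langle x,\Sigma_\mu^{-1}x\rangle$; your remarks on absolute convergence and on positive definiteness via aperiodicity are correct details that the paper leaves implicit.
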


\begin{proof}
	We will make use of the definition $\Sigma_{\mu} = \sum_{v \in \Z^d} \mu(v)vv^T$. Using this, we obtain for every  symmetric matrix $A \in \R^{d \times d}$ and $x \in \Z^d$,  
	\begin{align*}
		\sum_{v \in \Z^d} \mu(v) \langle x,Av \rangle^2 &= \sum_{v \in \Z^d} \mu(v) \big( x^T Av \big)^2 = \sum_{v \in \Z^d} \mu(v) \big( x^T Av \big)\big( v^T A x \big) \\
		&= x^T A \sum_{v \in \Z^d} \mu(v) vv^T Ax = x^T A \Sigma_{\mu} A x.
	\end{align*}   Consequently, with $A= \Sigma_{\mu}^{-1}$, $y_{\mu} = \sigma_{\mu} \, \Sigma_{\mu}^{-1/2}y$ and $\sigma_{\mu} = (\det \Sigma_{\mu})^{\frac{1}{2d}}$,  we get
	\[
		\sum_{v \in \Z^d} \mu(v) \frac{\langle x_{\mu}, v_{\mu} \rangle^2}{|x_{\mu}|^2}  = \sigma_{\mu}^2 \sum_{v \in \Z^d} \mu(v) \frac{\langle x, \Sigma_{\mu}^{-1}v \rangle^2}{\langle x, \Sigma_{\mu}^{-1} x \rangle} = \sigma_{\mu}^2 \frac{x^T \Sigma_{\mu}^{-1}x}{\langle x, \Sigma_{\mu}^{-1}x \rangle} = \sigma_{\mu}^2.\hfill \qedhere
	\]
\end{proof}

Let now  $\mu$ be a {\rw} on $\mathbb{Z}^d$, $d \geq 3$, such that $\|\mu\|_{d+3} < \infty$. Note that under scaling $w_{c\mu} = cw_\mu$, $\sigma^2_{c\mu} = c \sigma^2_{\mu}$ and $|\cdot|_{c\mu} = |\cdot|_{\mu}$ and so we can assume without loss of generality that $\mu$ is normalized, i.e., a random walk.
Then by Theorem~\ref{thm:UchiyamaFukai2} the $G_\mu$ admits the asymptotic expansion
\begin{align*}
	G_{\mu}(x) = \frac{c_{d,\mu}}{|x|_{\mu}^{d-2}} + \frac{\vartheta(x_{\mu})}{|x|_{\mu}^d} + \mathcal{O}(|x|^{-(d+2)}).
\end{align*}

\medskip

We consider the sets $A_x = \{v \in \mathbb{Z}^d \mid 2|v|_\mu< |x|_\mu \}$, $x \in \Z^d$, and for a function $F$ in $x$ and $v$ write $$F(x,v) = \mathcal{O}_A(|v|^m |x|^{-n})$$ if there exists a constant $C > 0$ with $\abs{F(x,v)} \leq C |v|^m |x|^{-n}$ for all $x \neq 0$ and $v \in A_x$.
Using this notation, we prove the following two lemmas.
\begin{lemma}
	\label{lem:psi}
	Let $\psi(x) = {\vartheta(x_\mu)}{|x|_\mu^{-2}}$ where $\vartheta$ is the function from the asymptotic expansion in Theorem~\ref{thm:UchiyamaFukai2}. Then, $\nabla \psi(x) v = \mathcal{O}(|v| |x|^{-3})$ and $v^T H_\psi(x) v = \mathcal{O}(|v|^2|x|^{-4})$, where $H_\psi$ is the Hessian matrix, and 
	\begin{align*}
		\psi(x+v) = \psi(x) + \nabla \psi(x) v + \mathcal{O}_A\left(|v|^2 |x|^{-4}\right).
	\end{align*}
\end{lemma}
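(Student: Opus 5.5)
The plan is to exploit homogeneity. Write $\psi(x) = \vartheta(x_\mu)|x|_\mu^{-2}$ with $\vartheta(x_\mu) = q(x_\mu/|x_\mu|)$. Since $x\mapsto x_\mu$ is linear and invertible, $\psi = \Psi\circ L$, where $L = \sigma_\mu\Sigma_\mu^{-1/2}$ and $\Psi(y) = q(y/|y|)\,|y|^{-2}$ on $\R^d\setminus\{0\}$. The function $\Psi$ is smooth away from the origin and positively homogeneous of degree $-2$. Hence $\partial^\alpha\Psi$ is homogeneous of degree $-2-|\alpha|$, and so $|\partial^\alpha \Psi(y)|\le C_\alpha |y|^{-2-|\alpha|}$ with $C_\alpha = \sup_{|z|=1}|\partial^\alpha\Psi(z)|$, which is finite by compactness of the unit sphere. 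By the chain rule, $\nabla\psi(x)v = \nabla\Psi(Lx)Lv$ and $v^TH_\psi(x)v = (Lv)^TH_\Psi(Lx)(Lv)$. Using $|Lv|\le \|L\|\,|v|$ and the norm equivalence $|Lx| = |x|_\mu \asymp |x|$, we get the two bounds $\mathcal{O}(|v||x|^{-3})$ and $\mathcal{O}(|v|^2|x|^{-4})$. These hold for all $x\neq 0$ and all $v$.

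For the Taylor expansion, fix $x\neq0$ and $v\in A_x$, so $2|v|_\mu<|x|_\mu$. The segment $x+sv$, $s\in[0,1]$, satisfies
\[
|x+sv|_\mu \ge |x|_\mu - |v|_\mu > \tfrac12|x|_\mu,
\]
so it avoids the origin and $\psi$ is $C^2$ along it. Taylor's theorem with Lagrange remainder gives
\[
\psi(x+v) = \psi(x) + \nabla\psi(x)v + \tfrac12 v^TH_\psi(x+s^*v)v
\]
for some $s^*\in(0,1)$. By the Hessian bound just proved, the remainder is at most $C|v|^2|x+s^*v|^{-4}$. Since $|x+s^*v|_\mu\ge \tfrac12|x|_\mu$ and the norms are equivalent, this is at most $C'|v|^2|x|^{-4}$, with $C'$ independent of $x$ and $v\in A_x$. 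This is exactly $\mathcal{O}_A(|v|^2|x|^{-4})$.

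The only point needing care is uniformity of the constants. It comes from two facts: the $C_\alpha$ are finite because $q(y/|y|)$ is smooth on a neighbourhood of the unit sphere, as the composition of a polynomial with the smooth map $y\mapsto y/|y|$; and the restriction $v\in A_x$ keeps the whole segment in the region $|y|_\mu\ge \tfrac12|x|_\mu$. I do not expect a genuine obstacle. The derivative estimates follow directly from homogeneity, and the rest is routine.
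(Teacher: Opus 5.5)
Your proof is correct and follows the paper's argument: homogeneity of degree $-2$ gives derivative bounds from a compactness estimate on the unit sphere, and Taylor's formula along the segment (which stays in $|y|_\mu \geq \frac{1}{2}|x|_\mu$ for $v \in A_x$) yields the $\mathcal{O}_A(|v|^2|x|^{-4})$ remainder. Your factorization $\psi = \Psi \circ L$ is only a cosmetic difference, since the paper applies the scaling identity $H_\psi(\lambda x) = \lambda^{-4} H_\psi(x)$ to $\psi$ directly.
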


\begin{proof}
	We claim that the Hessian matrix of $\psi$ satisfies
	\begin{align*}
		H_\psi(\lambda x) = \lambda^{-4} H_\psi(x)
	\end{align*}
	for all $\lambda > 0$ and $x \neq 0$. Denote by $m_\lambda:\R \rightarrow \R$, $t \mapsto \lambda t$ the multiplication operator. Note that $\psi(\lambda x) = \lambda^{-2} \psi(x)$, i.e.,\@ $\psi \circ m_\lambda = \lambda^{-2} \psi$, and so for all $i, j = 1, \ldots ,d$ it holds by the chain rule,
	\begin{align*}
		\lambda^{-2} \partial_i \partial_j \psi(x) = \partial_i \partial_j (\psi \circ m_\lambda)(x) = \lambda^2 \partial_i \partial_j \psi(\lambda x). 
	\end{align*}
	The claim about the Hessian matrix follows. Since $\psi$ is smooth, we can find a constant $C > 0$ such that $| v^T H_\psi(x) v | \leq C$ for all $x, v \in \Z^d$ with $|x| = |v| = 1$. From that it follows that 
	\begin{align*}
		\frac{|v^T H_\psi(x) v|}{|v|^2 |x|^4} = \abs{ \left( \frac{v}{|v|} \right)^T  H_\psi \!\left( \frac{x}{|x|} \right) \frac{v}{|v|} } \leq C
	\end{align*}
	for all $x, v \in \Z^d \backslash \set{0}$. This proves $v^T H_\psi(x) v = \mathcal{O}(|v|^2|x|^{-4})$. 
	
	The asymptotic order of $\nabla \psi(x) v$ is proven similarly. By  Taylor's formula there exists $u_{x,v}$ on the line segment connecting $x$ and $x + v$ such that
	\begin{align*}
		\psi(x+v) = \psi(x) + \nabla \psi(x) v + \frac{1}{2} v^T H_\psi(u_{x,v}) v.
	\end{align*}
	If $v \in A_x$, then $(1/2) |x|_\mu \leq |u_{x,v}|_\mu \leq (3/2) |x|_\mu$ and it follows that
	\[
		\psi(x+v) - \psi(x) - \nabla \psi(x) v = \mathcal{O}_A\left( |v|^2 |x|^{-4} \right).\hfill\qedhere
	\]
\end{proof}

\begin{lemma} \label{lem:power_expansions_G}
	Let $r(x,v) = \frac{G_\mu(x+v) - G_\mu(x)}{G_\mu(x)}$. Then,
	\begin{align*}
		r(x,v) &= O_1(x,v) + E_2(x,v) + O_3(x,v) + \mathcal{O}_A\big(|v|^4 |x|^{-4}\big), \\
		r(x,v)^2 &= O_1^2(x,v) + O_1(x,v)E_2(x,v) + \mathcal{O}_A\big(|v|^4 |x|^{-4}\big), \\
		r(x,v)^3 &= O_1^3(x,v) + \mathcal{O}_A\big(|v|^4 |x|^{-4}\big), \\
		r(x,v)^4 &= \mathcal{O}_A\big(|v|^4 |x|^{-4}\big),
	\end{align*}
	where $O_i$ are odd functions in $v$ of asymptotic order $\mathcal{O}(|v|^i|x|^{-i})$ and $E_2$ is an even function in $v$ of asymptotic order $\mathcal{O}(|v|^2 |x|^{-2})$. The function $O_1$ is explicitly given by $O_1(x,v) = (2-d) \frac{\langle x,v \rangle_\mu}{\abs{x}_\mu^2}$.
\end{lemma}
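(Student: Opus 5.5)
We write $G_\mu = \phi + \psi + R$ with $\phi(x) = c_{d,\mu}|x|_\mu^{2-d}$, $\psi(x) = \vartheta(x_\mu)|x|_\mu^{-d}$ and $R(x) = \mathcal{O}(|x|^{-(d+2)})$, using Theorem~\ref{thm:UchiyamaFukai2}. Note that $\psi$ here is $|x|_\mu^{-d+2}$ times the function of Lemma~\ref{lem:psi}; the same homogeneity argument, now with degree $-d$, applies. For $v\in A_x$ we have $|x+v|_\mu \geq |x|_\mu/2$, so the error term satisfies $R(x+v)-R(x) = \mathcal{O}_A(|x|^{-(d+2)})$. Dividing by $G_\mu(x) \asymp |x|^{2-d}$ gives $\mathcal{O}_A(|x|^{-4})$. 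Since $v\in\Z^d\setminus\{0\}$ forces $|v|\ge 1$, this is also $\mathcal{O}_A(|v|^4|x|^{-4})$.

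\textbf{Expansion of the leading part.} Let $s = \langle x,v\rangle_\mu/|x|_\mu^2$ and $t = |v|_\mu^2/|x|_\mu^2$. Then
\[
\frac{|x+v|_\mu^{2-d}}{|x|_\mu^{2-d}} = (1+2s+t)^{(2-d)/2}.
\]
On $A_x$ we have $|2s+t|\le 2\cdot\tfrac12+\tfrac14<1$ uniformly, so we can Taylor expand $(1+z)^{\alpha}$ with $\alpha=(2-d)/2$ to third order with a uniform remainder. Grouping terms by parity in $v$ and by homogeneity gives
\[
(2-d)s \;+\; \bigl[\alpha t + 2\alpha(\alpha-1)s^2\bigr] \;+\; \bigl[\text{odd cubic terms}\bigr] \;+\; \mathcal{O}_A(|v|^4|x|^{-4}).
\]
The odd cubic terms are $2\alpha(\alpha-1)st$ and $\tfrac43\alpha(\alpha-1)(\alpha-2)s^3$; they are odd because $s$ is odd and $t$ is even in $v$. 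This identifies $O_1=(2-d)\langle x,v\rangle_\mu/|x|_\mu^2$, as claimed.

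\textbf{Correction term.} For $\psi$ we use a Taylor expansion of the kind in Lemma~\ref{lem:psi}, carried one order further:
\[
\psi(x+v)-\psi(x) = \nabla\psi(x)v + \tfrac12 v^TH_\psi(x)v + \mathcal{O}_A(|v|^3|x|^{-d-3}).
\]
Here $\nabla\psi(x)v$ is odd of order $|v||x|^{-d-1}$, and the Hessian term is even of order $|v|^2|x|^{-d-2}$.

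\textbf{Dividing by $G_\mu(x)$.} We write
\[
\frac{1}{G_\mu(x)} = \frac{|x|_\mu^{d-2}}{c_{d,\mu}}\bigl(1 - \vartheta(x_\mu)c_{d,\mu}^{-1}|x|_\mu^{-2} + \mathcal{O}(|x|^{-4})\bigr).
\]
Since the coefficients depend only on $x$, multiplying preserves parity in $v$.

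\textbf{Collecting terms.} The $\psi$ contributions go into the following places:
\begin{itemize}
\item $\nabla\psi(x)v\,|x|^{d-2}$ is odd of order $|v||x|^{-3}$. Since $|v|\ge1$, it is $\mathcal{O}(|v|^3|x|^{-3})$, so it is absorbed into $O_3$.
\item The Hessian term is of order $|v|^2|x|^{-4}\le|v|^4|x|^{-4}$, so it goes into the remainder.
\item The product $-\vartheta c^{-1}|x|^{-2}\cdot(2-d)s$ is odd of order $|v||x|^{-3}$, so it also goes into $O_3$.
\end{itemize}
All cross terms of total order at least $|x|^{-4}$ are absorbed using $|v|\ge 1$. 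This gives the expansion of $r$. Every term is also bounded by $|v||x|^{-1}$, which is exactly where $A_x$ is needed.

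\textbf{Powers of $r$.} With $r = O_1+E_2+O_3+\mathcal{O}_A(|v|^4|x|^{-4})$ and each piece at most $C|v|^i|x|^{-i}$, we expand the powers algebraically. For $r^2$, all products of total degree $\ge 4$ go into the remainder: $E_2^2$, $O_1O_3$, and so on. Here we use $|v|/|x|\le$ const on $A_x$ to compare degrees, since $(|v|/|x|)^k\le C(|v|/|x|)^4$ for $k\ge4$. The degree-2 and degree-3 terms are $O_1^2 + 2O_1E_2$. The stated form has $O_1E_2$ rather than $2O_1E_2$; since $E_2$ is not uniquely specified, we just rename $2E_2$ in that formula, or note that only the parity and order matter. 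Similarly, $r^3 = O_1^3 + \mathcal{O}_A(|v|^4|x|^{-4})$, and $r^4=\mathcal{O}_A(|v|^4|x|^{-4})$.

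\textbf{Main obstacle.} The main difficulty is bookkeeping rather than any single estimate: absorbing the lower-order $\psi$ terms into the correct parity classes relies on $|v|\ge1$ on the lattice. We also need uniformity of the Taylor remainders, which requires the uniform bound $|2s+t|<1$ on $A_x$.
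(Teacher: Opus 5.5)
Your proof is correct and is essentially the paper's argument: a third-order Taylor expansion of $|x+v|_\mu^{2-d}/|x|_\mu^{2-d}$ in $s,t$, a Taylor expansion of the homogeneous correction term, and absorption of $\nabla\psi(x)v$ into $O_3$ because $|v|$ is bounded below on $\Z^d\setminus\{0\}$; the only differences are that you split $G_\mu=\phi+\psi+R$ additively instead of factoring out $c_{d,\mu}|x|_\mu^{2-d}$, and your remark that the $r^2$ line really carries $2O_1E_2$ is accurate and harmless, since only oddness is used later. One slip: $2\cdot\frac{1}{2}+\frac{1}{4}=\frac{5}{4}$, so $|2s+t|<1$ fails on $A_x$, but it is not needed---on $A_x$ one has $1+2s+t=|x+v|_\mu^2/|x|_\mu^2\in[\frac{1}{4},\frac{9}{4}]$, where $(1+z)^{(2-d)/2}$ is smooth, so the Lagrange remainder is uniformly $\mathcal{O}(|z|^4)$ with $|z|\le C|v|/|x|$.
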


\begin{proof}
	Let $\psi(x) = {\vartheta(x_\mu)}{|x|_\mu^{-2}}$ be as in the previous lemma. Then, by Theorem \ref{thm:UchiyamaFukai2}, the function $G_\mu$ satisfies 
	\begin{align*}
		G_{\mu}(x) &= \frac{c_{d,\mu}}{|x|_{\mu}^{d-2}} + \frac{\vartheta(x_{\mu})}{|x|_{\mu}^d} + \mathcal{O}(|x|^{-(d+2)}) \\
		&= c_{d,\mu} |x|_\mu^{2-d} \left( 1 + c_{d,\mu}^{-1} \psi(x) +  \mathcal{O}(|x|^{-4}) \right),
	\end{align*}
	which implies, since $\psi(x)=\mathcal{O}(|x|^{-2})$, 
	\begin{align*}
		G_\mu^{-1}(x) = c_{d,\mu}^{-1} |x|_\mu^{d-2} \left( 1 - c_{d,\mu}^{-1} \psi(x) + \mathcal{O}(|x|^{-4}) \right).
	\end{align*}
	Further,
	\begin{align*}
		G_{\mu}(x+v) = c_{d,\mu} |x+v|_\mu^{2-d} \left( 1 + c_{d,\mu}^{-1} \psi(x+v) + \mathcal{O}(|x+v|^{-4}) \right).
	\end{align*}
	We take the product of those two previous expansions to get an expansion for $\frac{G_\mu(x+v) - G_\mu(x)}{G_\mu(x)}$ but we want it as an expansion in $\mathcal{O}_A$ instead. We can replace $\mathcal{O}(|x|^{-4})$ by $\mathcal{O}_A(|x|^{-4})$ and since $(1/2) |x|_\mu \leq |x+v|_\mu \leq (3/2) |x|_\mu$ if $v \in A_x$, we can also replace $\mathcal{O}(|x+v|^{-4})$ by $\mathcal{O}_A(|x|^{-4})$. Additionally, we may always replace $\mathcal{O}_A(|v|^m |x|^{-4})$ by $\mathcal{O}_A(|v|^4 |x|^{-4})$, if $m \leq 4$, since $|\cdot|_\mu$ is bounded away from zero on $\Z^d\setminus\{0\}$. Finally, we can always replace $\mathcal{O}_A(|v|^n |x|^{-n})$ by $\mathcal{O}_A(|v|^4 |x|^{-4})$, if $n \geq 4$ and $v \in A_x$.
	
	With that in mind and with the expansion $\psi(x+v) = \psi(x) + \nabla \psi(x) v + \mathcal{O}_A\left(|v|^2 |x|^{-4}\right)$ from the previous lemma at hand, we  derive the expansion	
	\begin{align*}
		G_{\mu}(x+v) = c_{d,\mu} |x+v|_\mu^{2-d} \left( 1 + c_{d,\mu}^{-1} \psi(x) + c_{d,\mu}^{-1} \nabla \psi(x) v + \mathcal{O}_A(|v|^4 |x|^{-4}) \right),
	\end{align*}
	and then it follows that 
	\begin{align*}
		\frac{G_\mu(x+v)}{G_\mu(x)} = \left( \frac{|x+v|_\mu^2}{|x|_\mu^2} \right)^{\frac{2-d}{2}} \left( 1 + c_{d,\mu}^{-1} \nabla \psi(x) v + \mathcal{O}_A(|v|^4 |x|^{-4}) \right),
	\end{align*}
	where we use that $\psi^2(x) = \mathcal{O}(|x|^{-4})$ and $\psi(x) \nabla \psi(x) v = \mathcal{O}_A(|v||x|^{-5})$ which is both in $\mathcal{O}_A(|v|^4 |x|^{-4})$.
	When we expand
	\begin{align*}
		\left( \frac{|x+v|_\mu^2}{|x|_\mu^2} \right)^{\frac{2-d}{2}} = \left( 1 + \frac{2\langle x,v \rangle_{\mu} + |v|_{\mu}^2}{|x|_{\mu}^2} \right)^{\frac{2-d}{2}}
	\end{align*}
	according to $(1 + r)^s = 1 + sr + \binom{s}{2}r^2 + \binom{s}{3} r^3 + \mathcal{O}(r^4)$, $s = \frac{2-d}{2}$, this yields a sum over products over the terms $\frac{2\langle x,v \rangle_{\mu}}{|x|_\mu^2}$ and $\frac{|v|_{\mu}^2}{|x|_{\mu}^2}$. The first term is odd in $v$ and of order $\mathcal{O}(|v| |x|^{-1})$, and the second one is even in $v$ and of order $\mathcal{O}(|v|^2 |x|^{-2})$. Their products are in $\mathcal{O}(|v|^n |x|^{-n})$ for some $n \in \N$, which on $A_x$ is in $\mathcal{O}(|v|^4 |x|^{-4})$ as soon as $n \geq 4$. We denote the sum over all $\mathcal{O}(|v|^2 |x|^{-2})$ products by $E_2(x,v)$, which is even in $v$, and the sum over all $\mathcal{O}(|v|^3 |x|^{-3})$ products as $\tilde{O}_3(x,v)$, which is odd in $v$. The only terms that do not belong to either, or to the general error term $\mathcal{O}_A(|v|^4 |x|^{-4})$, are the constant $1$ term and $(2-d) \frac{\langle x,v \rangle_\mu}{\abs{x}_\mu^2}$ which we denote by $O_1(x,v)$ and it is also odd in $v$. 
	With this notation we get the asymptotic expansion 
	\begin{align*}
		\left( \frac{|x+v|_\mu^2}{|x|_\mu^2} \right)^{\frac{2-d}{2}} =  1 + O_1(x,v) + E_2(x,v) + \tilde{O}_3(x,v) + \mathcal{O}_A(|v|^4 |x|^{-4})
	\end{align*}
	and it follows that
	\begin{multline*}
		\frac{G_\mu(x+v)}{G_\mu(x)} = \left( 1 + O_1(x,v) + E_2(x,v) + \tilde{O}_3(x,v) + \mathcal{O}_A(|v|^4 |x|^{-4}) \right) \\
		\hfill \cdot \left( 1 + c_{d,\mu}^{-1} \nabla \psi(x) v + \mathcal{O}_A(|v|^4 |x|^{-4}) \right) \\
		\hfill= 1 + O_1(x,v) + E_2(x,v) + \tilde{O}_3(x,v) + c_{d,\mu}^{-1} \nabla \psi(x) v + \mathcal{O}_A(|v|^4 |x|^{-4}).
	\end{multline*}
	Note that $\nabla \psi(x) v$ is odd in $v$ and $\nabla \psi(x) v = \mathcal{O}_A(|v|^3 |x|^{-3})$ by the previous lemma. If we set $O_3(x,v) = \tilde{O}_3(x,v) + c_{d,\mu}^{-1} \nabla \psi(x) v$ and subtract $1$, then we finish the proof of the expansion for $r(x,v)$. The expansions for the higher powers follow directly from the first one. 
\end{proof}

\begin{proof}[Proof of Proposition~\ref{prop:asymp_w_d>2}]
	First, we write $w_\mu$ as
	\begin{align*}
		w_\mu(x) = \sum_{v \in \Z^d} \mu(v) \left( 1 - \sqrt{ 1 + \frac{G_\mu(x+v) - G_\mu(x)}{G_\mu(x)} } \right).
	\end{align*}
	We use the bound $\abs{1-\sqrt{1+r}} \leq \abs{r}$ 
	valid for $r \geq -1$ to estimate
	\begin{align*}
		\abs{1 - \sqrt{ 1 + \frac{G_\mu(x+v) - G_\mu(x)}{G_\mu(x)} }} \leq \abs{\frac{G_\mu(x+v) - G_\mu(x)}{G_\mu(x)}} \leq C |x|^{d-2},
	\end{align*}
	where we  used boundedness of $G_\mu$ in the numerator and the asymptotic expansion of  $G_\mu$ from Theorem~\ref{thm:UchiyamaFukai2} in the denominator. 
	It follows that
	\begin{align*}
		\abs{x}^4\!
		\! \sum_{v \in X \backslash A_x}\! \mu(v) \left( 1 - \sqrt{ 1 + \frac{G_\mu(x+v) - G_\mu(x)}{G_\mu(x)} } \right) &\leq 2^{d+2} C \!\!\sum_{v \in X \backslash A_x}\! \mu(v) \abs{v}^{d+2}, 
	\end{align*}
	which under the moment condition $\|\mu\|_{d+3} < \infty$ implies that 
	\begin{align*}
		w_\mu(x) = \sum_{v \in A_x} \mu(v) \left( 1 - \sqrt{ 1 + \frac{G_\mu(x+v) - G_\mu(x)}{G_\mu(x)} } \right) + \mathcal{O} \left( \abs{x}^{-4} \right).
	\end{align*}
	Similarly, one also shows that
	\begin{align*}
		\sum_{v \in A_x} \mu(v) \frac{G_\mu(x+v) - G_\mu(x)}{G_\mu(x)} = -\frac{\Delta G_\mu(x)}{G_\mu(x)} + \mathcal{O}\left(|x|^{-4}\right) = \mathcal{O}\left(|x|^{-4}\right),
	\end{align*}
	where we used that $G_\mu$ is harmonic on $\Z^d \setminus \{0\}$, \cite[p.\@ 279]{Spitzer76}.
	
	Next, we use the asymptotic expansion for $r\to 0$
	\begin{align*}
		1 - \sqrt{1 + r} &= - \frac{1}{2} r + \frac{1}{8} r^2 - \frac{1}{16} r^3 + \mathcal{O}(r^4)
	\end{align*}
	and the estimates for $r(x,v) = \frac{G_\mu(x+v) - G_\mu(x)}{G_\mu(x)}$ from Lemma~\ref{lem:power_expansions_G} to obtain
	\begin{multline*}
		1 - \sqrt{ 1 + \frac{G_\mu(x+v) - G_\mu(x)}{G_\mu(x)} } \\\hspace{-2cm}= - \frac{1}{2} r(x,v) + \frac{1}{8} r(x,v)^2 - \frac{1}{16} r(x,v)^3 + \mathcal{O}_A(|v|^4 |x|^{-4}) \\
		= - \frac{1}{2} \frac{G_\mu(x+v) - G_\mu(x)}{G_\mu(x)} + \frac{1}{8} \left( O_1^2(x,v) + O_1(x,v)E_2(x,v) \right) \\
		\quad\quad - \frac{1}{16} O_1^3(x,v) + \mathcal{O}_A\left( |v|^4 |x|^{-4} \right).
	\end{multline*}
	As $O_1 E_2$, $O_1^3$ are odd in $v$ and $A_x=\{2|v|_\mu<|x|_\mu\}$ is symmetric (and finite),
	\begin{align*}
		\sum_{v \in A_x} \mu(v) O_1(x,v)E_2(x,v) = 0 = \sum_{v \in A_x} \mu(v) O_1^3(x,v).
	\end{align*}
	Furthermore, since $\|\mu\|_{4}\leq \|\mu\|_{d+3} < \infty$,
	\begin{align*}
		\sum_{v \in A_x} \mu(v) \mathcal{O}_A\left( |v|^4 |x|^{-4} \right) = \mathcal{O} \left( \abs{x}^{-4} \right).
	\end{align*}
	We have $O_1(x,v) = (2-d) \frac{\langle x,v \rangle_\mu}{\abs{x}_\mu^2}$ by Lemma~\ref{lem:power_expansions_G}, so,
	\begin{align*}
		\sum_{v \in A_x} \mu(v) O_1^2(x,v) &= (2-d)^2 |x|_\mu^{-2} \sum_{v \in A_x} \mu(v) \frac{\langle x,v \rangle_\mu^2}{\abs{x}_\mu^2} \\
		&= (2-d)^2 |x|_\mu^{-2} \left( \sum_{v \in \Z^d} \mu(v) \frac{\langle x,v \rangle_\mu^2}{\abs{x}_\mu^2} + \mathcal{O} \left( \abs{x}^{-2} \right) \right) \\
		&= (2-d)^2 \sigma_\mu^2 |x|_\mu^{-2} + \mathcal{O} \left( \abs{x}^{-4} \right),
	\end{align*}
	where we used the definition of $A_x$ and $\|\mu\|_{2}<\infty$  in the second line as well as 
	 Lemma~\ref{lem:normcovariance} in the last line. Putting everything together, this yields
	\begin{align*}
		w_\mu(x) &= \sum_{v \in A_x} \mu(v) \left( 1 - \sqrt{ 1 + \frac{G_\mu(x+v) - G_\mu(x)}{G_\mu(x)} } \right) + \mathcal{O} \left( \abs{x}^{-4} \right) \\
		&= \frac{1}{8} \sum_{v \in A_x} \mu(v) O_1^2(x,v) + \mathcal{O} \left( \abs{x}^{-4} \right) \\
		&= \frac{(2-d)^2 \sigma_\mu^2}{8 |x|_\mu^{2}} + \mathcal{O} \left( \abs{x}^{-4} \right).\hfill\qedhere
	\end{align*}
\end{proof}

\section{Expansion of $w_{\mu}$ for $d=2$}\label{sec:proofd=2}

In this section, we prove the expansion formula for the weight
\[
w_{\mu}\colon \Z^2 \setminus \{0\} \to [0, \infty), \qquad w(x) = \frac{\Delta_{\mu} \, a_{\mu}^{1/2}(x)}{a_{\mu}^{1/2}(x)},
\]	
provided that $\mu$ is  symmetric  and satisfies an appropriate moment condition.

Our proof relies strongly on an asymptotic expansion of the potential kernel due to results of Fukai and Uchiyama \cite{FU96,Uch98}, see also  \cite{Spitzer76}.

\begin{theorem}[Special case of \cite{FU96, Uch98}] \label{thm:UchiyamaFukai}
	Let $\mu$ be a  random walk over $\Z^2$. Then $a_{\mu}$ tends to infinity, i.e.,\@
	\[
	\lim_{|x| \to \infty} a_{\mu}(x) = \infty.
	\]
	If even  $\|\mu\|_6 < \infty$ and $\mu$ is symmetric,
	 then 
	the potential kernel $a_{\mu}$ has the asymptotic expansion 
	\[
	a_{\mu}(x) = \frac{1}{\pi \sigma_{\mu}^2} \log|x|_{\mu} \, + \, \kappa_{\mu} \, + \,  \frac{\vartheta(x_{\mu})}{|x|_{\mu}^2} + \mathcal{O}\Big( |x|^{-4} \Big)
	\]
	as $|x| \to \infty$, where $\kappa_{\mu}$ is a constant  and $\vartheta(x) = p(\omega_1^x, \omega_2^x)$, where $p:\R^2 \to \R$ is a polynomial and  $\omega^x_i$, $i \in \{1,2\}$, is the $i$-th coordinate of $x/|x|$.
	\end{theorem}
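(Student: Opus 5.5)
The plan is to deduce the statement directly from the literature and adapt the normalizations to our notation; no new analysis is needed. For the first assertion I would use that a random walk $\mu$ over $\Z^2$ with finite second moment and zero mean is recurrent and aperiodic by assumption. For such walks Spitzer \cite[Section~28--29]{Spitzer76} proves $a_{\mu}(x) \sim \frac{1}{\pi\sqrt{\det\Sigma_{\mu}}}\log|x|$ as $|x|\to\infty$, and in particular $a_{\mu}(x)\to\infty$. The same leading behaviour is also the first term of the expansion in \cite{FU96}, which only needs finite variance at leading order. I would check that our $a_{\mu}$, defined via the normalized walk $\mu_0=\mu$ (as $\mu$ is a random walk here), coincides with Spitzer's Fourier representation of the potential kernel.

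For the expansion I would invoke the main theorem of \cite{FU96}, together with its refinement in \cite{Uch98}. For a two-dimensional aperiodic walk with zero mean and sufficiently many moments it gives
\[
a_{\mu}(x)=\frac{1}{\pi\sqrt{\det\Sigma_{\mu}}}\log\sqrt{x^T\Sigma_{\mu}^{-1}x}+\kappa'+\sum_{k=1}^{3}\frac{P_k(\cdot)}{(x^T\Sigma_{\mu}^{-1}x)^{k/2}}+\mathcal{O}(|x|^{-4}),
\]
where the $P_k$ are polynomials in the normalized direction $\Sigma_{\mu}^{-1/2}x/|\Sigma_{\mu}^{-1/2}x|$ whose coefficients involve moments of $\mu$ of order $k+2$. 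The error $\mathcal{O}(|x|^{-4})$ requires $\|\mu\|_6<\infty$. The odd-order terms $k=1,3$ are built from odd moments of $\mu$, equivalently from odd Taylor coefficients of $\widehat\mu_0$. They vanish when $\mu$ is symmetric, as remarked on p.~217 of \cite{Uch98}, and the same remark is used in the proof of Theorem~\ref{thm:UchiyamaFukai2}.

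It then remains to translate into our notation. For $d=2$ we have $\sigma_{\mu}^2=(\det\Sigma_{\mu})^{1/2}$ and $|x|_{\mu}^2=\sigma_{\mu}^2\,x^T\Sigma_{\mu}^{-1}x$. Hence the leading term becomes $\frac{1}{\pi\sigma_{\mu}^2}\log|x|_{\mu}$, and the additive contribution $-\frac{1}{\pi\sigma_{\mu}^2}\log\sigma_{\mu}$ is absorbed into the constant $\kappa_{\mu}$. Moreover $\Sigma_{\mu}^{-1/2}x/|\Sigma_{\mu}^{-1/2}x| = x_{\mu}/|x_{\mu}|$, so the $k=2$ term equals $\sigma_{\mu}^2P_2(\omega^{x_{\mu}})/|x|_{\mu}^2$. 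This is of the asserted form with $\vartheta(x_{\mu})=p(\omega_1^{x_{\mu}},\omega_2^{x_{\mu}})$ for $p=\sigma_{\mu}^2P_2$. Finally, $\mathcal{O}(|x|_{\mu}^{-4})=\mathcal{O}(|x|^{-4})$ by equivalence of norms.

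The main obstacle is bookkeeping rather than mathematics. One must confirm that the moment hypotheses in \cite{FU96,Uch98} for the remainder $\mathcal{O}(|x|^{-4})$ are indeed met under $\|\mu\|_6<\infty$, and that symmetry removes both odd terms, including the $|x|^{-3}$ term, which involves fifth-order quantities. If the cited formulation only gives an $o(|x|^{-2})$ or $\mathcal{O}(|x|^{-3})$ remainder under sixth moments, I would instead rerun their Fourier argument. That argument Taylor expands $1-\widehat\mu_0(\xi)$ to sixth order around $0$ and uses the evenness of $\widehat\mu_0$ for symmetric $\mu$, which kills all odd-order corrections and upgrades the remainder.
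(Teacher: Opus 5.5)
Your proposal is correct and follows essentially the same route as the paper, which simply cites Theorem~2 of \cite{FU96} and Theorem~1 of \cite{Uch98}, together with the remark on p.~217 of \cite{Uch98} that the $\mathcal{O}(|x|^{-1})$ and $\mathcal{O}(|x|^{-3})$ terms vanish for symmetric walks. Your extra bookkeeping is accurate and only makes explicit what the paper leaves implicit: absorbing $-\frac{1}{\pi\sigma_\mu^2}\log\sigma_\mu$ into $\kappa_\mu$, identifying $\Sigma_\mu^{-1/2}x/|\Sigma_\mu^{-1/2}x|$ with $x_\mu/|x_\mu|$, and obtaining $a_\mu\to\infty$ from the leading logarithmic asymptotics.
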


	\begin{proof}
		See \cite[Theorem~2]{FU96} and \cite[Theorem~1]{Uch98}, together with the comment on p.\@ 217 in \cite{Uch98} on vanishing of the $\mathcal{O}(|x|^{-1})$ and $\mathcal{O}(|x|^{-3})$ terms for {\rw}s.
	\end{proof}

	\begin{remark}[The constant $\kappa_{\mu}$]
		In certain situations of interest, one can make the terms in the expansion more explicit. For instance, for the simple random walk $\mu= 1/4 \cdot  1_{|x|=1}$, we have $$\kappa_{\mu} = \frac{\log 8 + 2\gamma}{\pi},$$ where $\gamma$ is the Euler constant and $$\vartheta(x) = (6\pi)^{-1} \big( 8(\omega_1^x \omega_2^x)^2 - 1 \big),$$ see \cite[Remark~2]{FU96}. Furthermore, the authors of \cite{FU96} point out that their proof is identical to the one in \cite[Section~12, P3]{Spitzer76} for a slightly less general case, where the  constants $\eta$ and $\kappa_{\mu}$ are shown to be positive.
	\end{remark}
	
\begin{proposition} \label{prop:asymp_w_d=2}
	Suppose that $\mu$ is a {\rw} on $\Z^2$ and assume that $\|\mu\|_{6} < \infty$. Then, asymptotically as $|x| \to \infty$, 
	\begin{align*}
		w_{\mu}(x) &=  \, \frac{\theta^2}{8\pi^2 \sqrt{\det \Sigma_{\mu}}}  \frac{1}{a_{\mu}(x)^2 |x|_{\mu}^2} \, + \, \mathcal{O} \Bigg( \frac{1}{|x|^{4} (\log |x|)^2} \Bigg), \\
		&= \frac{\sqrt{\det \Sigma_{\mu}}}{8|x|_{\mu}^2 (\log|x|_{\mu})^2} -  \frac{\kappa_{\mu} \pi \det \Sigma_{\mu}}{4 \theta |x|_{\mu}^2 (\log |x|_{\mu})^3}  + \mathcal{O}\Bigg( \frac{1}{|x|^2 (\log |x|)^4} \Bigg),
	\end{align*}
	where $\theta = \sum_{\Z^2} \mu$ and $\kappa_{\mu}$ is from the asymptotic expansion of $a_{\mu}$. 
\end{proposition}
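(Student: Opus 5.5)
We follow the $d\ge3$ argument of Proposition~\ref{prop:asymp_w_d>2}, replacing $G_\mu$ by $a_\mu$ and using Theorem~\ref{thm:UchiyamaFukai}. The expansion of $a_\mu$ is in terms of the random walk $\mu_0=\mu/\theta$, and $w_{c\mu}=cw_\mu$, $\Sigma_{c\mu}=c\Sigma_\mu$. We therefore first prove the claim for a normalized random walk, where $\theta=1$, and track the factors of $\theta$ at the end through $\sigma_\mu$.

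We write
\[
w_\mu(x)=\sum_{v}\mu(v)\Bigl(1-\sqrt{1+r(x,v)}\Bigr),\qquad r(x,v)=\frac{a_\mu(x+v)-a_\mu(x)}{a_\mu(x)},
\]
and split the sum over $v\in A_x=\{2|v|_\mu<|x|_\mu\}$ and its complement.

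\emph{Complement of $A_x$.} Since $|1-\sqrt{1+r}|\le|r|$ and $a_\mu(y)\lesssim\log(2+|y|)$, we have
\[
|r(x,v)|\lesssim \frac{\log(2+|x|+|v|)}{\log|x|}\lesssim 1+\log(2+|v|).
\]
On the complement $|v|\gtrsim|x|$, so the contribution is at most
\[
|x|^{-4}\sum_{v}\mu(v)|v|^4\log(2+|v|)=\mathcal O(|x|^{-4}),
\]
using $\|\mu\|_6<\infty$. The same bound handles $\sum_{v\notin A_x}\mu(v)r(x,v)$.

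\emph{Expansion on $A_x$.} Here we need an analogue of Lemma~\ref{lem:power_expansions_G}. Put $L=\log|x|_\mu$ and $\psi(x)=\vartheta(x_\mu)|x|_\mu^{-2}$; the Hessian and Taylor estimates of Lemma~\ref{lem:psi} carry over verbatim. We have
\[
a_\mu(x+v)-a_\mu(x)=\tfrac{1}{2\pi\sigma_\mu^2}\log\Bigl(1+\tfrac{2\langle x,v\rangle_\mu+|v|_\mu^2}{|x|_\mu^2}\Bigr)+\nabla\psi(x)v+\mathcal O_A(|v|^2|x|^{-4})+\mathcal O(|x|^{-4}).
\]
Expanding the logarithm gives odd terms $O_1=\tfrac{1}{\pi\sigma_\mu^2}\tfrac{\langle x,v\rangle_\mu}{|x|_\mu^2}$ and $O_3$ (including $\nabla\psi(x)v$), an even term $E_2$ of order $|v|^2|x|^{-2}$, and a remainder $\mathcal O_A(|v|^4|x|^{-4})$. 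Dividing by $a_\mu(x)\asymp L$ gives
\[
r=\frac{O_1+E_2+O_3}{a_\mu(x)}+\mathcal O_A\bigl(|v|^4|x|^{-4}L^{-1}\bigr).
\]
We then use $1-\sqrt{1+r}=-\tfrac12r+\tfrac18r^2-\tfrac1{16}r^3+\mathcal O(r^4)$.
\begin{itemize}
\item The linear term contributes $\tfrac12\Delta_\mu a_\mu(x)/a_\mu(x)$ up to the tail, and this vanishes by harmonicity of $a_\mu$ off $0$.
\item The odd parts ($O_1E_2$, $O_1^3$, and the like) cancel by the symmetry of $\mu$ and of $A_x$.
\item The $\mathcal O_A(|v|^4|x|^{-4})$ terms sum to $\mathcal O(|x|^{-4}L^{-2})$ by the finite fourth moment. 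Cross terms such as $E_2^2/a^2$ are of this order and carry the factor $a^{-2}$.
\end{itemize}
The surviving term is $\tfrac18\sum_{A_x}\mu(v)O_1^2/a_\mu(x)^2$. Extending the sum to all of $\Z^2$ costs only $\mathcal O(|x|^{-4})$, and by Lemma~\ref{lem:normcovariance} the full sum equals
\[
\frac{\sigma_\mu^2}{\pi^2\sigma_\mu^4|x|_\mu^2}=\frac{1}{\pi^2\sqrt{\det\Sigma_\mu}\,|x|_\mu^2}.
\]
Reinstating $\theta$ yields the first display of the proposition.

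\emph{Second display.} Substitute $a_\mu(x)=\tfrac{\theta}{\pi\sigma_\mu^2}\bigl(L+\tfrac{\pi\sigma_\mu^2\kappa_\mu}{\theta}\bigr)+\mathcal O(|x|^{-2})$, where the constant is read off from Theorem~\ref{thm:UchiyamaFukai} for $\mu_0$ after rescaling. Then use $(L+c)^{-2}=L^{-2}-2cL^{-3}+\mathcal O(L^{-4})$. The leading coefficient becomes $\sigma_\mu^4/(8\sqrt{\det\Sigma_\mu})=\sqrt{\det\Sigma_\mu}/8$. The next coefficient is $-2c$ times this, namely $-\kappa_\mu\pi\det\Sigma_\mu/(4\theta)$, to be checked carefully with the normalization.

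\emph{Main obstacle.} The main difficulty is bookkeeping. Error terms of order $|x|^{-4}$ must be kept uniform, the logarithmic denominator handled correctly, and in particular the tail $v\notin A_x$ must be controlled, since $a_\mu(x+v)$ can be large relative to $a_\mu(x)$ when $|v|\gg|x|$. That last point is where the logarithmic factor and the sixth moment are used.
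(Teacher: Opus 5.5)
Your argument follows the paper's proof almost step for step. You normalize $\mu$ and split at $A_x$. You expand $a_\mu(x+v)-a_\mu(x)$ as $O_1+E_2+O_3+\mathcal{O}_A(|v|^4|x|^{-4})$, remove the linear term by harmonicity and the odd terms by symmetry, and evaluate $\sum_v\mu(v)O_1^2$ with Lemma~\ref{lem:normcovariance}. For the second display you expand $a_\mu^{-2}$. Your constants, including the $\theta$-bookkeeping, are right.

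There is one concrete gap: the tail over $v\notin A_x$. You pass from $|r(x,v)|\lesssim\log(2+|x|+|v|)/\log|x|$ to $|r(x,v)|\lesssim 1+\log(2+|v|)$, which throws away the factor $1/\log|x|$. You then use only $|v|^4\gtrsim|x|^4$. This gives $\mathcal{O}(|x|^{-4})$ both for $\sum_{v\notin A_x}\mu(v)\bigl(1-\sqrt{1+r(x,v)}\bigr)$ and for the tail of the linear term. That is not the error $\mathcal{O}\bigl(|x|^{-4}(\log|x|)^{-2}\bigr)$ asserted in the first display. As written, you prove the first display only up to $\mathcal{O}(|x|^{-4})$. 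The second display is unaffected, since its error $\mathcal{O}\bigl(|x|^{-2}(\log|x|)^{-4}\bigr)$ absorbs this loss.

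The repair is short, and there are two ways to do it.

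\emph{Keep the logarithmic denominator.} This is what the paper does via Lemma~\ref{lem:log_bound}. Your own intermediate bound already provides it, because $|v|\gtrsim|x|$ off $A_x$. With $|r(x,v)|\lesssim\log(2+|v|)/\log|x|$ and $|x|^4\log|x|\lesssim|v|^4\log(2+|v|)$ for $v\notin A_x$, you get
\[
\sum_{v\notin A_x}\mu(v)\,|r(x,v)|\lesssim\frac{1}{|x|^4(\log|x|)^2}\sum_{v\in\Z^2}\mu(v)|v|^4\bigl(\log(2+|v|)\bigr)^2 .
\]

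\emph{Actually use the sixth moment you invoke.} Since $t\mapsto\log(2+t)/t^6$ is decreasing, even your crude bound yields $\sum_{v\notin A_x}\mu(v)\bigl(1+\log(2+|v|)\bigr)\lesssim|x|^{-6}\log|x|\,\|\mu\|_6$. This is $o\bigl(|x|^{-4}(\log|x|)^{-2}\bigr)$.
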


We spend the rest of the section on proving this proposition. Let $\mu$ be a {\rw} on $\Z^2$ such that $\|\mu\|_{6}  < \infty$.  
As in the $d \ge 3$ case, we may assume that $\mu$ is normalized since $w_{c\mu} = cw_\mu$, $\Sigma_{c\mu} = c \Sigma_\mu$, $a_{c\mu} = a_\mu$, $\kappa_{c\mu} = \kappa_\mu$, and $|\cdot|_{c\mu} = |\cdot|_{\mu}$.

\begin{lemma}
	\label{lem:log_bound}
	There exists $C > 0$, $r > 1$ such that
	\begin{align*}
		\abs{ a_\mu(x+v) - a_\mu(x) } \leq C \log(\abs{v}_\mu)
	\end{align*}
	whenever $x \neq 0$ and $\abs{v}_\mu > r$.
\end{lemma}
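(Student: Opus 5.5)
We aim for a uniform bound $|a_\mu(x+v)-a_\mu(x)| \le C\log|v|_\mu$, valid for all $x\neq 0$ and all $|v|_\mu$ large. The lemma is stated inside the section where $\|\mu\|_6<\infty$ is assumed, so Theorem~\ref{thm:UchiyamaFukai} gives the expansion
\[
a_\mu(x)=\tfrac{1}{\pi\sigma_\mu^2}\log|x|_\mu+\kappa_\mu+\mathcal{O}(|x|^{-2}).
\]
As a consequence there are constants $R\ge 2$ and $C_0$ with
\[
\bigl|a_\mu(y)-\tfrac{1}{\pi\sigma_\mu^2}\log|y|_\mu\bigr|\le C_0 \quad\text{for all } |y|_\mu\ge R.
\]
Since $a_\mu$ is finite on the finite set $\{|y|_\mu<R\}$, after enlarging $C_0$ we get the global bound
\[
|a_\mu(y)|\le \tfrac{1}{\pi\sigma_\mu^2}\log^+|y|_\mu + C_0 \quad\text{for all } y\in\Z^2.
\]
Here $\log^+=\max\{\log,0\}$, and we use $a_\mu(0)=0$. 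With this comparison to the logarithm of the norm in hand, the proof reduces to elementary estimates on logarithms, split into two cases depending on how $|x|_\mu$ compares with $|v|_\mu$.

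\emph{Case 1: $|x|_\mu\le 2|v|_\mu$.} Then $|x+v|_\mu\le 3|v|_\mu$. By the global bound,
\[
|a_\mu(x+v)-a_\mu(x)|\le |a_\mu(x+v)|+|a_\mu(x)|\le \tfrac{2}{\pi\sigma_\mu^2}\log(3|v|_\mu)+2C_0 .
\]

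\emph{Case 2: $|x|_\mu> 2|v|_\mu$.} Then $|x|_\mu$ and $|x+v|_\mu$ both exceed $|v|_\mu$. Once $|v|_\mu\ge R$, both points lie in the region where the expansion holds, so
\[
|a_\mu(x+v)-a_\mu(x)|\le \tfrac{1}{\pi\sigma_\mu^2}\Bigl|\log\tfrac{|x+v|_\mu}{|x|_\mu}\Bigr|+2C_0 .
\]
The ratio $|x+v|_\mu/|x|_\mu$ lies in $[1/2,3/2]$, so the logarithm term is at most $\log 2$. In fact Case 2 yields a bounded difference, which is even better than required.

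Combining the two cases, for $|v|_\mu> r$ with $r\ge\max\{R,3\}$ we have
\[
|a_\mu(x+v)-a_\mu(x)|\le C_1\log|v|_\mu + C_2 .
\]
Since $\log|v|_\mu\ge \log r>0$, the additive constant $C_2$ can be absorbed by taking $C=C_1+C_2/\log r$. This proves the lemma.

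The only substantive ingredient is the global comparison $a_\mu(y)=\frac{1}{\pi\sigma_\mu^2}\log|y|_\mu+\mathcal{O}(1)$, which here is immediate from the expansion under $\|\mu\|_6<\infty$. If one wanted the lemma under finite variance alone, this step would be the main obstacle: I would instead invoke the first-order asymptotics $a_\mu(x)\sim\frac{1}{\pi\sigma_\mu^2}\log|x|_\mu$ from \cite{FU96} or \cite[Section~12]{Spitzer76}, which hold under second moments. Everything else is routine.
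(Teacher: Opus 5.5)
Your proof is correct and follows essentially the paper's route. Both arguments rest on the comparison $a_\mu(y)=\frac{1}{\pi\sigma_\mu^2}\log|y|_\mu+\mathcal{O}(1)$ from Theorem~\ref{thm:UchiyamaFukai}; the paper avoids your case split by bounding $\bigl|\log(|x+v|_\mu/|x|_\mu)\bigr|\le\log(1+\varepsilon^{-1}|v|_\mu)$ via $|y|_\mu\ge\varepsilon$ on $\Z^2\setminus\{0\}$, while your $\log^+$ global bound has the small advantage of also covering $x+v=0$. Your closing aside is not right as stated, though irrelevant to the lemma (which assumes $\|\mu\|_6<\infty$): mere $a_\mu\sim\frac{1}{\pi\sigma_\mu^2}\log|x|_\mu$ would not rescue Case~2, which genuinely needs the bounded error term.
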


\begin{proof}
	By the asymptomptic expansion of $a_\mu$, Theorem~\ref{thm:UchiyamaFukai},
	\begin{align*}
		a_{\mu}(x) &= \frac{1}{\pi \sigma_{\mu}^2} \log|x|_{\mu} \, + \, \kappa_{\mu} \, + \,  \frac{\vartheta(x_{\mu})}{|x|_{\mu}^2} + \mathcal{O}\Big( |x|^{-4} \Big) = \frac{1}{\pi \sigma_{\mu}^2} \log|x|_{\mu} \, + \mathcal{O}(1).
	\end{align*}
	There exists $\varepsilon > 0$ with $\abs{x}_\mu \geq \varepsilon$ for $x \neq 0$, and then, with $c = \frac{1}{\pi \sigma_{\mu}^2}$ and a suitable constant $M > 0$, it holds that
	\begin{align*}
		\abs{a_\mu(x+v) - a_\mu(x)} &\leq c \abs{ \log \left( \frac{\abs{x+v}_\mu}{\abs{x}_\mu} \right) } + M \leq c \log(1 + \varepsilon^{-1} \abs{v}_\mu) + M.
	\end{align*}
	Therefore,
	\begin{align*}
		\limsup_{\abs{v}_\mu \to \infty} \frac{\abs{a_\mu(x+v) - a_\mu(x)}}{\log(\abs{v}_\mu)} \leq c,
	\end{align*}
	and the claim follows. 
\end{proof}

As in the last section, we let $A_x = \{v \in \mathbb{Z}^2 \mid  2|v|_\mu< |x|_\mu \}$ for $x \in \Z^2$, and use the notation $F(x,v) = \mathcal{O}_A(|v|^m |x|^{-n})$ as before. Then, one can prove the following analogue of Lemma~\ref{lem:power_expansions_G}.

\begin{lemma} \label{lem:power_expansions}
	Let $r(x,v) = a_\mu(x+v) - a_\mu(x)$. Then, 
	\begin{align*}
		r(x,v) &= O_1(x,v) + E_2(x,v) + O_3(x,v) + \mathcal{O}_A\big(|v|^4 |x|^{-4}\big), \\
		r(x,v)^2 &= O_1^2(x,v) + O_1(x,v)E_2(x,v) + \mathcal{O}_A\big(|v|^4 |x|^{-4}\big), \\
		r(x,v)^3 &= O_1^3(x,v) + \mathcal{O}_A\big(|v|^4 |x|^{-4}\big), \\
		r(x,v)^4 &= \mathcal{O}_A\big(|v|^4 |x|^{-4}\big),
	\end{align*}
	where $O_i$ are odd functions in $v$ of asymptotic order $\mathcal{O}(|v|^i|x|^{-i})$ and $E_2$ is an even function in $v$ of asymptotic order $\mathcal{O}(|v|^2|x|^{-2})$. The function $O_1$ is explicitly given by $O_1(x,v) = \frac{1}{\pi \sigma_{\mu}^2}  \frac{\langle x,v \rangle_\mu}{\abs{x}_\mu^2}$.
\end{lemma}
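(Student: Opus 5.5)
We mirror the proof of Lemma~\ref{lem:power_expansions_G}, replacing the multiplicative structure by an additive one. Write $c = \frac{1}{\pi\sigma_\mu^2}$ and $\psi(x) = \vartheta(x_\mu)|x|_\mu^{-2}$. By Theorem~\ref{thm:UchiyamaFukai}, for $x \neq 0$ and $v \in A_x$,
\[
r(x,v) = \frac{c}{2}\log\Big(\frac{|x+v|_\mu^2}{|x|_\mu^2}\Big) + \psi(x+v) - \psi(x) + \mathcal{O}(|x|^{-4}) + \mathcal{O}(|x+v|^{-4}).
\]
Here the constant $\kappa_\mu$ cancels. Since $|x+v|_\mu \ge \tfrac12|x|_\mu$ on $A_x$, both error terms are $\mathcal{O}_A(|x|^{-4})$. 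Because $|\cdot|_\mu$ is bounded below on $\Z^2\setminus\{0\}$, this can be absorbed into $\mathcal{O}_A(|v|^4|x|^{-4})$ whenever $v \neq 0$. For $v = 0$ we have $r = 0$ and all terms vanish trivially.

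For the $\psi$-difference, Lemma~\ref{lem:psi} applies verbatim. Its proof only used that $\psi$ is smooth away from $0$ and homogeneous of degree $-2$, which holds in $d=2$ as well. It gives
\[
\psi(x+v) - \psi(x) = \nabla\psi(x)v + \mathcal{O}_A(|v|^2|x|^{-4}),
\]
with $\nabla\psi(x)v$ odd in $v$ and $\mathcal{O}(|v||x|^{-3})$. On $A_x$ we have $|v||x|^{-3} \lesssim |v|^3|x|^{-3}$ and $|v|^2|x|^{-4} \lesssim |v|^4|x|^{-4}$, using $|v| \gtrsim 1$ for $v \neq 0$. So $\nabla\psi(x)v$ joins the odd cubic term, and the remainder goes into the error.

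For the logarithm, set
\[
s = \frac{2\langle x,v\rangle_\mu + |v|_\mu^2}{|x|_\mu^2},
\]
so that $|s| \le \tfrac12 + \tfrac14 < 1$ on $A_x$, uniformly bounded away from $1$. Expand
\[
\log(1+s) = s - \frac{s^2}{2} + \frac{s^3}{3} + \mathcal{O}(s^4),
\]
where the $\mathcal{O}$ constant is uniform on $|s|\le 3/4$ and $s^4 = \mathcal{O}_A(|v|^4|x|^{-4})$. Multiplying out in powers of the odd quantity $\alpha = \langle x,v\rangle_\mu|x|_\mu^{-2} = \mathcal{O}(|v||x|^{-1})$ and the even quantity $\beta = |v|_\mu^2|x|_\mu^{-2}$, group the terms by total order:

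\begin{itemize}
\item order one: $c\alpha$, which gives $O_1(x,v) = \frac{1}{\pi\sigma_\mu^2}\frac{\langle x,v\rangle_\mu}{|x|_\mu^2}$;
\item order two: $\frac{c}{2}(\beta - 2\alpha^2)$, which is even and is $E_2$;
\item order three: odd terms such as $-c\alpha\beta + \tfrac{4c}{3}\alpha^3$, which together with $\nabla\psi(x)v$ form $O_3$;
\item order four and higher: absorbed into $\mathcal{O}_A(|v|^4|x|^{-4})$, using $|v||x|^{-1} \le C$ on $A_x$.
\end{itemize}

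This proves the first expansion.

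The expansions of $r^2$, $r^3$ and $r^4$ then follow by multiplying out. Every product of total order at least four is $\mathcal{O}_A(|v|^4|x|^{-4})$, again because $|v|/|x|$ is bounded on $A_x$, and every cross term with the error is bounded the same way. For $r^2$, the surviving terms are $O_1^2 + 2O_1E_2$. The factor $2$ is harmless, since one simply renames $E_2$ in that line or notes that the statement is only used through parity. If the exact coefficient matters, I would record $r^2 = O_1^2 + 2O_1E_2 + \dots$ and track it in the subsequent sum, where it vanishes by oddness anyway.

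The only genuinely delicate point is the bookkeeping of the error terms. The absolute term $\mathcal{O}(|x|^{-4})$ must be converted into $\mathcal{O}_A(|v|^4|x|^{-4})$, which is precisely where $v \neq 0$ and the lower bound on $|v|_\mu$ are needed. One also has to check that the Taylor remainders are uniform, which the choice of $A_x$ guarantees.
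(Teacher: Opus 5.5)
Your proposal is correct and follows the paper's proof essentially step by step. It cancels $\kappa_\mu$, treats $\psi(x+v)-\psi(x)$ via Lemma~\ref{lem:psi}, expands $\log(1+s)$ and groups by order, absorbs $\nabla\psi(x)v$ into $O_3$, and multiplies out for the powers; your remark that the $r^2$ line really carries $2O_1E_2$ is right and harmless, since that term only enters through oddness.

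One small slip: on $A_x$ you only have $|2\langle x,v\rangle_\mu|/|x|_\mu^2<1$, so in fact $s\in(-\tfrac34,\tfrac54)$ rather than $|s|\le\tfrac34$. This does not affect the argument, because $1+s=|x+v|_\mu^2/|x|_\mu^2>\tfrac14$ still keeps the Taylor remainder of $\log(1+s)$ uniformly $\mathcal{O}(s^4)$.
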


\begin{proof}
	By Theorem~\ref{thm:UchiyamaFukai},
	\begin{align*}
		a_{\mu}(x) = \frac{1}{\pi \sigma_{\mu}^2} \log|x|_{\mu} \, + \, \kappa_{\mu} \, + \,  \frac{\vartheta(x_{\mu})}{|x|_{\mu}^2} + \mathcal{O}\Big( |x|^{-4} \Big).
	\end{align*}
	Let $\psi(x) = {\vartheta(x_{\mu})}{|x|_{\mu}^{-2}}$. By the same argument as in Lemma~\ref{lem:psi} it holds that
	\begin{align*}
		\psi(x+v) - \psi(x) = \nabla \psi(x) v + \mathcal{O}_A \left( \abs{v}^2 \abs{x}^{-4} \right).
	\end{align*}
	It follows that
	\begin{multline*}
		a_\mu(x+v) - a_\mu(x)= \frac{1}{\pi \sigma_{\mu}^2} \left( \log(\abs{x+v}_\mu) - \log(\abs{x}_\mu) \right) + \nabla \psi(x) v\\ + \mathcal{O}_A \left( \abs{v}^4 \abs{x}^{-4} \right), 
	\end{multline*}
	where we replaced $\mathcal{O}(|x+v|^{-4})$ and $\mathcal{O}_A \left( \abs{v}^2 \abs{x}^{-4} \right)$ by $\mathcal{O}_A \left( \abs{v}^4 \abs{x}^{-4} \right)$ which can be justified exactly as in the proof of Lemma~\ref{lem:power_expansions_G}. 
	Next, we can see that
	\begin{align*}
		\frac{1}{\pi \sigma_{\mu}^2} \left( \log(\abs{x+v}_\mu) - \log(\abs{x}_\mu) \right) = \frac{1}{2 \pi \sigma_{\mu}^2} \log \left( 1 + \frac{2\langle x,v \rangle_{\mu} + |v|_{\mu}^2}{|x|_{\mu}^2} \right), 
	\end{align*}
	and we expand this asymptotically according to $\log(1+r) = r - \frac{1}{2} r^2 + \frac{1}{3} r^3 + \mathcal{O}(r^4)$.
	Grouping the terms as in the proof of Lemma~\ref{lem:power_expansions_G} we can derive 
	\begin{align*}
		\frac{1}{\pi \sigma_{\mu}^2} \left( \log(\abs{x+v}_\mu) - \log(\abs{x}_\mu) \right) &= O_1(x,v) + E_2(x,v) \\
		&\quad\quad\quad + \tilde{O}_3(x,v) + \mathcal{O}_A(|v|^4 |x|^{-4}),
	\end{align*} 
	where $O_1$ is odd in $v$ and of order $\mathcal{O}(|v||x|^{-1})$, $E_2$ is even in $v$ and of order $\mathcal{O}(|v|^2|x|^{-2})$, and $\tilde{O}_3$ is odd in $v$ and of order $\mathcal{O}(|v|^3|x|^{-3})$ (the functions are not identical to those in Lemma~\ref{lem:power_expansions_G} though). 
	It follows that
	\begin{multline*}
		a_\mu(x+v) - a_\mu(x) \\= O_1(x,v) + E_2(x,v) + \tilde{O}_3(x,v) + \nabla \psi(x) v + \mathcal{O}_A \left( \abs{v}^4 \abs{x}^{-4} \right)
	\end{multline*}
	and setting $O_3(x,v) = \tilde{O}_3(x,v) + \nabla \psi(x) v$ finishes the proof of the expansion for $r$. The other expansions follow from the first one. 
\end{proof}

\begin{proof}[Proof of Proposition~\ref{prop:asymp_w_d=2}]
	The proof follows the same outline as the one of Proposition~\ref{prop:asymp_w_d>2}. 
	We write $w_\mu$ as
	\begin{align*}
		w_\mu(x) = \sum_{v \in \Z^d} \mu(v) \left( 1 - \sqrt{ 1 + \frac{a_\mu(x+v) - a_\mu(x)}{a_\mu(x)} } \right).
	\end{align*}
	We again use the bound  $\abs{1-\sqrt{1+r}} \leq \abs{r}$ and use Lemma~\ref{lem:log_bound} to obtain 
	\begin{align*}
		\abs{1 - \sqrt{ 1 + \frac{a_\mu(x+v) - a_\mu(x)}{a_\mu(x)} }} \leq \abs{\frac{a_\mu(x+v) - a_\mu(x)}{a_\mu(x)}} = \mathcal{O} \left(\frac{\log |v|}{\log |x|} \right)
	\end{align*}
	and
	\begin{multline*}
		\abs{x}^4 (\log |x|)^2 \sum_{v \in X \backslash A_x} \mu(v) \left( 1 - \sqrt{ 1 + \frac{a_\mu(x+v) - a_\mu(x)}{a_\mu(x)} } \right) \\
		\leq C \sum_{v \in X \backslash A_x} \mu(v) \abs{v}^{4} (\log |v|)^2.
	\end{multline*}
	Since $\|\mu\|_{6} < \infty$, this implies  
	\begin{align*}
		w_\mu(x) = \sum_{v \in A_x} \mu(v) \left( 1 - \sqrt{ 1 + \frac{a_\mu(x+v) - a_\mu(x)}{a_\mu(x)} } \right) + \mathcal{O} \left( \abs{x}^{-4} (\log |x|)^{-2} \right).
	\end{align*}
	and similarly, 
	\begin{align*}
		\sum_{v \in A_x} \mu(v) \frac{a_\mu(x+v) - a_\mu(x)}{a_\mu(x)}& = -\frac{\Delta a_\mu(x)}{a_\mu(x)} + \mathcal{O}\left(|x|^{-4} (\log |x|)^{-2} \right)\\
		& = \mathcal{O}\left(|x|^{-4} (\log |x|)^{-2}  \right),
	\end{align*}
	where we also used that $a_\mu$ is harmonic on $\Z^2 \setminus \{0\}$, see \cite{Spitzer76}. 
	Next, we use the asymptotic expansion 
	\begin{align*}
		1 - \sqrt{1 + t} &= - \frac{1}{2} t + \frac{1}{8} t^2 - \frac{1}{16} t^3 + \mathcal{O}(t^4)
	\end{align*}
	and Lemma~\ref{lem:power_expansions} to approximate using the notation $r(x,v) = a_\mu(x+v) - a_\mu(x)$
	\begin{align*}
		1 -& \sqrt{ 1 + \frac{a_\mu(x+v) - a_\mu(x)}{a_\mu(x)} } \\
		&= - \frac{1}{2} a_\mu(x)^{-1} r(x,v) + \frac{1}{8} a_\mu(x)^{-2} r(x,v)^2 \\
		&\quad\quad - \frac{1}{16} a_\mu(x)^{-3} r(x,v)^3 + \mathcal{O}_A(|v|^4 |x|^{-4} (\log |x|)^{-4}) \\
		&= - \frac{1}{2} \frac{a_\mu(x+v) - a_\mu(x)}{a_\mu(x)} + \frac{1}{8} a_\mu(x)^{-2} \left( O_1^2(x,v) + O_1(x,v)E_2(x,v) \right) \\
		&\quad\quad - \frac{1}{16}  a_\mu(x)^{-3} O_1^3(x,v) + \mathcal{O}_A\left( |v|^4 |x|^{-4} (\log |x|)^{-2} \right).
	\end{align*}
	Again, we have 
	\begin{align*}
		\sum_{v \in A_x} \mu(v) O_1(x,v)E_2(x,v) = \sum_{v \in A_x} \mu(v) O_1^3(x,v) = 0
	\end{align*}
	since $O_1 E_2$ and $O_1^3$ are odd in $v$ and $A_x$ is symmetric, and by the explicit form $O_1(x,v) = \frac{1}{\pi \sigma_{\mu}^2} \frac{\langle x,v \rangle_\mu}{\abs{x}_\mu^2}$ from Lemma~\ref{lem:power_expansions} and Lemma~\ref{lem:normcovariance} it follows by a short and straightforward computation that 
	\begin{align*}
		\sum_{v \in A_x} \mu(v) O_1^2(x,v) 
		&= \frac{1}{\pi^2 \sigma_{\mu}^4} |x|_\mu^{-2} \sum_{v \in \Z^d} \mu(v) \frac{\langle x,v \rangle_\mu^2}{\abs{x}_\mu^2} + \mathcal{O} \left( \abs{x}^{-4} (\log |x|)^{-2} \right) \\
		&= \frac{1}{\pi^2 \sigma_{\mu}^2} |x|_\mu^{-2} + \mathcal{O} \left( \abs{x}^{-4} (\log |x|)^{-2} \right).
	\end{align*}
	Since $\sigma_{\mu}^2 = \sqrt{\det \Sigma_{\mu}}$, this gives us the first expansion for $w_\mu$,
	\begin{align*}
		w_\mu(x) &= \sum_{v \in A_x} \mu(v) \left( 1 - \sqrt{ 1 + \frac{a_\mu(x+v) - a_\mu(x)}{a_\mu(x)} } \right) + \mathcal{O} \left( \abs{x}^{-4} (\log |x|)^{-2} \right) \\
		&= \frac{1}{8} \frac{1}{a_{\mu}(x)^2} \sum_{v \in A_x} \mu(v) O_1^2(x,v) + \mathcal{O} \left( \abs{x}^{-4} (\log |x|)^{-2} \right) \\
		&= \frac{1}{8 \pi^2 \sqrt{\det \Sigma_{\mu}}} \frac{1}{a_{\mu}(x)^2 |x|_{\mu}^2} + \mathcal{O} \left( \abs{x}^{-4} (\log |x|)^{-2} \right).
	\end{align*}
	To prove the second expansion, we set $\rho_\mu = \pi \sigma_{\mu}^2$ and remember that the asymptotic behavior of the potential kernel is 
	\begin{align*}
		a_{\mu}(x) &= \rho_\mu^{-1} \log|x|_{\mu} + \kappa_{\mu} + \mathcal{O}\big( |x|^{-2} \big) \\
		&= \frac{\log|x|_{\mu}}{\rho_\mu} \left( 1 + \frac{\rho_\mu \kappa_\mu}{\log |x|_\mu} + \mathcal{O} \left((\log |x|)^{-1} |x|^{-2} \right) \right).
	\end{align*}
	Applying the Taylor expansion $(1+y)^{-2} = 1 - 2y + \mathcal{O}(y^2)$, we  write $a_\mu^{-2}$ as
	\begin{align*}
		\frac{1}{a_{\mu}(x)^2} &= \frac{\rho_\mu^2}{(\log|x|_{\mu})^2} \left( 1 - \frac{2\rho_\mu \kappa_{\mu}}{\log|x|_{\mu}} + \mathcal{O}\Bigg(\frac{1}{(\log |x|)^{2}}\Bigg) \right) \\
		&= \frac{\rho_\mu^2}{(\log|x|_{\mu})^2} - \frac{2\rho_\mu^3 \kappa_{\mu}}{(\log|x|_{\mu})^3} + \mathcal{O}\Bigg(\frac{1}{(\log |x|)^{4}}\Bigg).
	\end{align*}
	Plugging this into the first asymptotic expansion yields
	\begin{align*}
		w_{\mu}(x) &= \frac{1}{8\pi\rho_\mu |x|_\mu^2} \left( \frac{\rho_\mu^2}{(\log|x|_{\mu})^2} - \frac{2\rho_\mu^3 \kappa_{\mu}}{(\log|x|_{\mu})^3} \right) + \mathcal{O}\Bigg( \frac{1}{|x|^2 (\log |x|)^4} \Bigg) \\
		&= \frac{\rho_\mu}{8\pi|x|_{\mu}^2 (\log|x|_{\mu})^2} - \frac{\rho_\mu^2 \kappa_{\mu}}{4\pi |x|_{\mu}^2 (\log |x|_{\mu})^3} + \mathcal{O}\Bigg( \frac{1}{|x|^2 (\log |x|)^4} \Bigg).
	\end{align*}
	Since $\frac{\rho_\mu}{\pi} = \sqrt{\det \Sigma_{\mu}}$, the claimed expansion holds. Note that in case $\mu$ is not normalized,  $\theta$ appears in the denominator of the second term in the expansion since both $w_{\mu}$ and $\rho_{\mu}$ scale linearly with respect to $\mu$.
\end{proof}

\section{Proofs of Theorem~\ref{thm:MAIN}, Corollary~\ref{cor:MAIN} and Theorem~\ref{thm:MAIN2}} \label{sec:proofs}

We will now prove the remaining main results stated in the introduction of this paper.

\medskip

The following two lemmas show that the technical conditions on $u=a_{\mu}$ and $u=G_{\mu}$ are satisfied in order to apply the general criteria provided in Theorem~\ref{thm:CRITERION} and Theorem~\ref{thm:abstractmain}.

\begin{lemma} \label{lem:propera}
	Let $\mu$ be a {\rw} over $\Z^2$  and let $a_{\mu}$ be the potential kernel. Then, $a_{\mu}$ is proper and satisfies a weak bounded oscillation condition.
\end{lemma}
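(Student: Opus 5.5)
The plan is to reduce the lemma to Proposition~\ref{prop:criterionC0}. That proposition states that every non-negative $u \in (C_0(X)\cup I_\infty(X))\cap\mathcal{F}$ is proper and satisfies a weak bounded oscillation condition. So it suffices to check three facts about $u=a_\mu$ on $X=\Z^2$ with $b=b_\mu$: that $a_\mu \ge 0$, that $a_\mu \in I_\infty(\Z^2)$, and that $a_\mu\in\mathcal{F}$.

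\emph{Non-negativity.} This holds because $a_\mu$ maps into $[0,\infty)$ by definition. Concretely, symmetry of $\mu$ makes $\widehat{\mu}_0$ real with $1-\widehat{\mu}_0\ge 0$, and symmetrizing the integrand gives $1-\cos\langle x,\xi\rangle\ge0$.

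\emph{Tending to infinity.} Since $a_{c\mu}=a_\mu$, we may normalize $\mu$ to a random walk. The first part of Theorem~\ref{thm:UchiyamaFukai} then gives $a_\mu(x)\to\infty$ as $|x|\to\infty$; this part needs no higher moment condition. Because each ball in $\Z^2$ is finite, this is exactly the statement $a_\mu\in I_\infty(\Z^2)$.

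\emph{Membership in $\mathcal{F}$.} We need $\sum_{y}\mu(x-y)a_\mu(y)<\infty$ for every $x$. This is the only point requiring an estimate, since the asymptotic expansion in Theorem~\ref{thm:UchiyamaFukai} is only available under $\|\mu\|_6<\infty$. I would instead use the crude bound $a_\mu(y)\le C(1+\log(1+|y|))$. One route is the standard estimate $a_\mu(y)\sim \frac{1}{\pi\sigma_\mu^2}\log|y|$ for finite-variance aperiodic walks (Spitzer, Section~12 / Section~28), which follows under the second moment condition alone.

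A more elementary route, if that estimate is unavailable, goes through the Fourier integral. The difference $a_\mu(y+z)-a_\mu(y)$ is controlled via $|e^{-i\langle y+z,\xi\rangle}-e^{-i\langle y,\xi\rangle}|\le \min(2,|z||\xi|)$ together with $1-\widehat{\mu}_0(\xi)\ge c|\xi|^2$ near $0$, which holds by aperiodicity and positive definiteness of $\Sigma_\mu$. This yields subadditivity-type bounds of the form $a_\mu(y)\le C\log(2+|y|)$.

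Given the logarithmic bound, we estimate
\[
\sum_y \mu(x-y)a_\mu(y)\le C\sum_v\mu(v)\bigl(1+\log(1+|x|+|v|)\bigr)<\infty,
\]
using $\log(1+|v|)\le |v|^2$ and $\|\mu\|_2<\infty$, so $a_\mu\in\mathcal{F}$.

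The main obstacle is this logarithmic upper bound on $a_\mu$ under the mere second-moment assumption. I would first try to cite Spitzer's general asymptotic $a(x)\sim \frac{1}{\pi\sigma^2}\log|x|$ for recurrent two-dimensional aperiodic walks with finite variance (in the normalized case), and fall back on the Fourier estimate otherwise. Once these three facts are in place, Proposition~\ref{prop:criterionC0} (in the $I_\infty$ case) concludes that $a_\mu$ is proper and satisfies a weak bounded oscillation condition for every $t_0>0$.
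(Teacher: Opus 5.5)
Your proposal is correct and follows the paper's proof. Like the paper, you normalize $\mu$, obtain $a_\mu\in I_\infty(\Z^2)$ from the first part of Theorem~\ref{thm:UchiyamaFukai}, check $a_\mu\in\mathcal{F}$, and conclude with Proposition~\ref{prop:criterionC0}; the one difference is the $\mathcal{F}$ step, where the paper cites Spitzer's harmonicity of $a_\mu$ off the origin (Section~13, P3~(a) with Section~8, T1~(b)) while you use the self-contained bound $a_\mu(y)\le C\log(2+|y|)$ from the Fourier integral together with $\|\mu\|_2<\infty$, which is valid.
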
	

\begin{proof}
	We can clearly assume that $\mu$ is normalized. 
	It follows from the first part of Theorem~\ref{thm:UchiyamaFukai} that $\lim_{|x| \to \infty}a_{\mu}(x) = \infty$, so $a_{\mu} \in I_{\infty}(X)$. Further, $a_{\mu}$ belongs to the formal domain $\mathcal{F}$ of $\mathcal{L} = \Delta_{\mu}$ by \cite[Section~13, P3~(a)]{Spitzer76} in combination with \cite[Section~8, T1~(b)]{Spitzer76}.  
	 That $a_{\mu}$ is proper and satisfies a weak bounded oscillation condition now follows from Proposition~\ref{prop:criterionC0}.
\end{proof}

\begin{lemma} \label{lem:properG}
	Let $\mu$ be a {\rw} over $\Z^d$, $d \geq 3$. Then, $G_{\mu}$ is proper  
	and satisfies a weak bounded oscillation condition.
\end{lemma}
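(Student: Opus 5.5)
The plan mirrors the proof of Lemma~\ref{lem:propera}: we show that $G_{\mu}$ is a non-negative element of $C_0(\Z^d)\cap\mathcal{F}$ and then invoke Proposition~\ref{prop:criterionC0}. Since $w_{c\mu}$, $G_{c\mu}$ and $\mathcal{F}$ only change by harmless scalings (indeed $G_{c\mu}=G_\mu$ and $\mathcal{F}$ is unchanged when $\mu$ is multiplied by a positive constant), we may assume that $\mu$ is normalized, i.e.\ a random walk.

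\emph{Positivity and boundedness.} As remarked before Theorem~\ref{thm:MAIN2}, $G_\mu$ is the Green function $G_\mu(x)=\sum_{n\ge0}\mu^{*n}(x)$ of the random walk $\mu$. By aperiodicity and $d\ge3$, the walk is transient, so $G_\mu$ is finite and strictly positive on $\Z^d$ (positivity follows from aperiodicity together with symmetry, since every $x$ is reachable). Moreover $G_\mu(x)\le G_\mu(0)$ for all $x$ by the strong Markov property. Hence $G_\mu$ is bounded, and therefore $\sum_y\mu(x-y)G_\mu(y)\le G_\mu(0)\sum_y\mu(x-y)<\infty$. This gives $G_\mu\in\mathcal{F}$.

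\emph{Vanishing at infinity.} This is the main point. Without additional moment conditions we cannot use Theorem~\ref{thm:UchiyamaFukai2}, so instead we appeal to the classical fact that, for aperiodic transient random walks on $\Z^d$ with $d\ge3$, $G_\mu(x)\to0$ as $|x|\to\infty$ \cite[Section~24, P5 / T2]{Spitzer76}. Under finite second moment this is part of the renewal-type result $G_\mu(x)\sim c_{d,\mu}|x|_\mu^{2-d}$. Alternatively, one can argue directly from the Fourier representation. Since $1-\widehat\mu_0(\xi)\ge c|\xi|^2$ near $0$ by finite variance and aperiodicity, and $1-\widehat\mu_0$ has no other zeros on $\mathbb{T}^d$, the function $(1-\widehat\mu_0)^{-1}$ is integrable on $\mathbb{T}^d$ for $d\ge3$. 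The Riemann--Lebesgue lemma then yields $G_\mu(x)\to0$. I would present this Fourier argument, since it is self-contained, and the only check needed is the lower bound on $1-\widehat\mu_0$, which follows from $1-\widehat\mu_0(\xi)=\sum_v\mu_0(v)(1-\cos\langle v,\xi\rangle)$ together with the positive definiteness of $\Sigma_\mu$ and aperiodicity.

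Thus $G_\mu\in C_0(\Z^d)\cap\mathcal{F}$ is non-negative, and Proposition~\ref{prop:criterionC0} gives that $G_\mu$ is proper and satisfies a weak bounded oscillation condition.
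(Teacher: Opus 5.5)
Your proposal is correct and follows the paper's route: you show that $G_\mu$ is a non-negative element of $C_0(\Z^d)\cap\mathcal{F}$ and invoke Proposition~\ref{prop:criterionC0}, taking $G_\mu\in C_0(\Z^d)$ from \cite[Section~24, P5]{Spitzer76} exactly as the paper does. Two of your steps go beyond the paper and are both valid: the explicit check that $G_\mu\in\mathcal{F}$ via $G_\mu\le G_\mu(0)$, which the paper leaves implicit, and the alternative Riemann--Lebesgue argument based on integrability of $(1-\widehat{\mu}_0)^{-1}$ for $d\ge 3$, which makes the argument self-contained.
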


\begin{proof}
	Since $G_{\mu} \in C_0(X)$ by \cite[Section~24, P5]{Spitzer76}, this immediately follows from Proposition~\ref{prop:criterionC0}. 
\end{proof}

We are now in position to prove Theorem~\ref{thm:MAIN}.

\begin{proof}[Proof of Theorem~\ref{thm:MAIN}]
	Note that $a_{\mu}$ satisfies all the conditions in order to apply Theorem~\ref{thm:CRITERION} with $X=\Z^2$, $K= \{0\}$, $u=a_{\mu}$. Indeed, it is well-known that $a_{\mu}(x) > 0$ for $x \neq 0$ and $a_{\mu}(0)=0$, see e.g.\@ \cite[Section~11, P7]{Spitzer76}.
	Further, 
	 $a_{\mu}$ is {\em harmonic on $\Z^2 \setminus \{0\}$}, i.e.,\@ $\Delta_{\mu} a_{\mu}(x) = 0 $ for $x \neq 0$, cf.\@ \cite[Section~13, P3~(a)]{Spitzer76} in combination with \cite[Section~8, T1~(b)]{Spitzer76}. Together with Lemma~\ref{lem:propera}, we can apply Theorem~\ref{thm:CRITERION} with $u=a_{\mu}$, which shows that $w_u = w_{\mu}$ is a  null-critical Hardy weight  on $\Z^2 \setminus \{0\}$. The claimed asymptotic expansion in case $\|\mu\|_6 < \infty$ follows from Proposition~\ref{prop:asymp_w_d=2}. 
\end{proof}

We now prove Corollary~\ref{cor:MAIN}. 

\begin{proof}[Proof of Corollary~\ref{cor:MAIN}]
	We observe that $\theta =4$, $\Sigma_{\mu} = 2\mathrm{Id}_{\R^2}$ and $|x|=|x|_{\mu}$. Further, by the remark after Theorem~\ref{thm:UchiyamaFukai}, we have $\kappa_{\mu} = (\log 8 + 2\gamma)/\pi$, with $\gamma$ being the Euler constant. Thus, the corollary follows from Theorem~\ref{thm:MAIN}.  
\end{proof}
We give the proof of Theorem~\ref{thm:MAIN2} using  Theorem~\ref{thm:abstractmain} of this paper. 

	\begin{proof}[Proof I of Theorem~\ref{thm:MAIN2}]
		We apply Theorem~\ref{thm:abstractmain} with $X=\Z^d$, $K=\emptyset$ and $u=G_{\mu}$. Indeed, $G_{\mu}$ is strictly positive, superharmonic and harmonic on $\Z^d \setminus \{0\}$ \cite[p.\@ 279]{Spitzer76}, and so $\Delta_{\mu} G_{\mu} \in \ell^1(\Z^d)$. The remaining requirements are fulfilled by Lemma~\ref{lem:properG}. This gives null-criticality of $w_{\mu}$.
		Under the assumed moment condition, the expansion follows from Proposition~\ref{prop:asymp_w_d>2}.
	\end{proof}
	
Indeed, the null-criticality part of Theorem~\ref{thm:MAIN2} can be also achieved by the methods of \cite{HKPIII}. We show this using the terminology of \cite{HKPIII}.	\begin{proof}[Proof II of Theorem~\ref{thm:MAIN2}]
		As as a multiple of the Green function, $G_{\mu}$ is clearly a potential (i.e., in the image of the Green operator) with charge in $C_c(\Z^d)\subseteq \ell^1(\Z^d)$. 
				Hence, the assumptions of \cite[Theorem~1]{HKPIII} are satisfied with $X=\Z^d$, $m=1$, $(b,c) = (b_{\mu},0)$ and $u=G_{\mu}$, which gives null-criticality of $w_{\mu}$. The expansion follows as above from Proposition~\ref{prop:asymp_w_d>2}.
	\end{proof}

\medskip

\textbf{Acknowledgement.} 
The authors gratefully acknowledge the financial support by the Deutsche Forschungsgemeinschaft (DFG). 
Moreover, M.K.\@ and F.P.\@ thank the Israel Institute for Advanced Studies Jerusalem (IIAS), for its hospitality during their stay as fellows of the 2025/2026 research group ``Analysis, Geometry, and Spectral Theory of Graphs''.


\printbibliography
\end{document}